\theoremstyle{plain}
\newtheorem{thm}{Theorem}
\newtheorem*{thm*}{Theorem}
\newtheorem{lem}[thm]{Lemma}
\newtheorem{prop}[thm]{Proposition}
\newtheorem{cor}[thm]{Corollary}
\newtheorem{df-prop}[thm]{Definition-Proposition}
\theoremstyle{definition}
\theoremstyle{remark}
\newtheorem{rem}[thm]{Remark}
\newcommand{\Hom}{\operatorname{Hom}}
\newcommand{\mc}{\mathcal}
\newcommand{\mf}{\mathfrak}
\newcommand{\C}{\mathbb C}
\newcommand{\oa}{{\bar 0}}
\newcommand{\ob}{{\bar 1}}
\newcommand{\vare}{\epsilon} 
\def\gl{\mathfrak{gl}}
\newcommand{\g}{\mathfrak{g}}
\def\la{\lambda}
\def\pn{\mf{pe} (n)}
\def\ov{\overline}
\newcommand{\ch}{\mathrm{ch}}
\newcommand{\h}{\mathfrak{h}}
\newcommand{\Z}{{\mathbb Z}}
\def\mod{\operatorname{-mod}\nolimits}
\def\Hom{\operatorname{Hom}\nolimits}
\def\End{\operatorname{End}\nolimits}
\def\Res{\operatorname{Res}\nolimits}
\def\Ind{\operatorname{Ind}\nolimits}
\def\Ext{\operatorname{Ext}\limits}
\def\pr{\operatorname{pr}\nolimits}
\def\gl{\mathfrak{gl}}
\def\la{\lambda}
\def\pn{\mf{pe} (n)}
\def\ov{\overline}
\newcommand{\wtg}{\widetilde{\g}}
\newcommand{\wto}{\widetilde{\mc O}}
\newcommand{\wtV}{\widetilde{M}}
\newcommand{\wtL}{\widetilde{L}}
\newcommand{\Id}{\text{Id}}
\newcommand{\Tor}{\text{Tor}}
\def\Mod{\operatorname{-Mod}\nolimits}
\newcommand{\tens}[1]{%
	\mathbin{\mathop{\otimes}\limits_{#1}}%
}
\newcommand{\hellgeq}{\h^{\scriptscriptstyle  \geq 1}_\ell}
\begin{document}
\title[Typical 
representations of Takiff superalgebras]{Typical 
representations of Takiff superalgebras}

\author[Chen]{Chih-Whi Chen} \address{Department of Mathematics, National Central University, Chung-Li, Taiwan 32054 \\ National Center of Theoretical Sciences,
	Taipei, Taiwan 10617} \email{cwchen@math.ncu.edu.tw}
\author[Wang]{Yongjie Wang} \address{School of Mathematics, Hefei University of Technology, Hefei, Anhui, 230009, China} \email{	wyjie@mail.ustc.edu.cn}
\begin{abstract} 
 We investigate representations of the $\ell$-th Takiff superalgebras $\wtg_\ell := \wtg\otimes \C[\theta]/(\theta^{\ell+1})$, for $\ell>0$, associated with  a basic classical and a periplectic Lie superalgebras  $\wtg$. We introduce the odd reflections and  formulate a general notion of  typical representations of the Takiff superalgebras $\wtg_\ell$. As a consequence, we provide a complete description of the characters of the finite-dimensional modules over type I Takiff superalgebras.  For the Lie superalgebras $\wtg= \gl(m|n)$ and  $\mf{osp}(2|2n)$, we prove that the Kac induction functor of $\wtg_\ell$ leads to  an equivalence from an arbitrary typical Jordan block of the  category $\mc O$ for $\wtg_\ell$ to a Jordan block of the category  $\mc O$ for the even subalgebra of $\wtg_\ell$.
We also obtain a  classification of non-singular simple Whittaker modules over the Takiff superalgebras.
\end{abstract}

\maketitle

\noindent
\textbf{MSC 2010:} 17B10, 17B55

\noindent
\textbf{Keywords:} Takiff  algebra; Takiff  superalgebra; Category $\mc O$; Typical representation; Whittaker module.

\vspace{5mm}

\section{Introduction}\label{sec1}
\subsection{Takiff algebras}
Let $\g$ be a complex finite-dimensional Lie algebra and $\ell$ be a non-negative integer.  The corresponding {\em $\ell$-th Takiff   algebra},\footnote{This
 terminology is more general than other usage of the same	term elsewhere in the literature, where "Takiff algebra" is typically used to indicate   the $1$-st Takiff algebra in the sense of \mbox{Takiff in \cite{T71}}.} as introduced by Takiff \cite{T71} in the case when $\ell=1$, is a Lie algebra over a truncated polynomial algebra:
\begin{align*}
&\g_\ell : = \g\otimes \C[\theta]/(\theta^{\ell+1}).
\end{align*}   
Such a Lie algebra was also considered in the literature under the name {\em truncated current Lie algebra} or {\em polynomial Lie algebra}, and it   appeared in numerous branches of mathematics and mathematical physics. This has led to  significant applications in various fields; see also \cite{CO06,  Wi11,BR13, MY16, AP17, MacS19, PY20, M21}. 

There has also been considerable recent progress in the representation theory of Takiff algebras associated with complex semisimple Lie algebras.  In \cite{MS19}, Mazorchuk and S\"oderberg  initiated the study of the category $\mc O$ for the $1$-st Takiff algebra  associated with $\mf{sl}(2)$. In loc. cit., the authors  described the  Gabriel quivers for blocks  and  determined the composition multiplicities of simple modules inside Verma modules for the $1$-st Takiff $\mf{sl}(2)_1$. For the more general $\ell$-th Takiff algebras associated with reductive Lie algebras $\g$, the problem of composition multiplicities of Verma modules has been solved by recent works of Chaffe and Topley  \cite{Ch23, CT23}.  The answer
	is formulated in terms of the Kazhdan-Lusztig combinatorics of the category $\mc O$ of $\g$. 
    
 To explain this in more detail, we fix  a triangular decomposition $\g = \mf n\oplus \mf h\oplus \mf n^-$  of a reductive Lie algebra $\g$. For a given subalgebra $\mf s\subseteq \g$, we define $\mf s_\ell:=\bigoplus_{i=0}^{\ell}\mf s \otimes \mathbb C\theta^i.$ This leads to a {\em triangular decomposition} of   $\g_\ell$  in the sense of \cite{CT23} (see also \cite{MS19, Ch23}):
\begin{align*}
&\g_\ell =\mf n_\ell\oplus \mf h_\ell \oplus \mf n^-_\ell.
\end{align*}  Let $\mf b_\ell :=\mf n_\ell \oplus \mf h_\ell$. We identify  $\g$  as the subalgebra  $\g\otimes 1=\g\otimes \theta^0$. The category $\mc O_\ell$ consists of all  finitely generated $\g_\ell$-modules on which  $\h$ acts semisimply and $\mf b_\ell$ acts locally finitely. For   $\la \in \h^\ast_\ell$,  let $\C_\la$ be the one-dimensional ${\mf h_\ell}$-module induced by $\la$. We may extend $\C_\la$ to be a ${\mf b}_\ell$-module by letting $\mf n_\ell\cdot \C_\la =0$.  The corresponding  Verma module  is defined as $$M(\la):= U(\g_\ell)\otimes_{U(\mf b_\ell)} \C_\la. $$
 The tops of $M(\la)$ ($\la \in \h^\ast_\ell$), which we shall denote by $L(\la)$, exhaust all simple highest weight modules over $\g_\ell$, that is, all simple objects in $\mc O_\ell$. Let $\hellgeq:=\bigoplus_{i=1}^\ell \h\otimes \theta^i$. Then the category $\mc O_\ell$ decomposes as  $\mc O_\ell = \bigoplus_{\kappa\in (\hellgeq)^\ast}\mc O_\ell^{\kappa},$      with  the full subcategory  $\mc O_\ell^\kappa$  consisting of modules $M\in \mc O_\ell$ on which $x-\kappa(x)$ acts locally nilpotently, for each $x\in \hellgeq$.  Following \cite{CT23}, the subcategories $\mc O^{\kappa}_\ell$ are called the {\em Jordan blocks} of $\mc O_\ell$.  
 Using the parabolic induction   and twisting functors,  there has been developed in \cite{CT23, Ch23}  a reduction procedure which provides an equivalence from an arbitrary Jordan block  $\mc O_\ell^\kappa$ to a Jordan block of $\mc O_\ell$ for a certain Takiff  subalgebra $\mf l_\ell$ associated with a Levi subalgebra $\mf l$ of $\g$. 
We refer to \cite{MS19, Ch23, CT23} for all the details mentioned above. We also refer to \cite{He18, He22, MM22, X23, Z24}  and references
therein for more results on the representation theory of Takiff algebras.

\subsection{Takiff superalgebras}
Fix a finite-dimensional Lie superalgebra $\wtg =\wtg_\oa\oplus \wtg_\ob$, see, for example, \cite{Ka1}. We  also refer to \cite{CW12}  for more details about the representation theory of Lie superalgebras. Throughout the paper, we set $$\g:=\wtg_\oa.$$ The constructions of Takiff algebras afford a natural superalgebra generalization, namely, for any non-negative integer $\ell\geqslant0$, we  consider the {\em $\ell$-th Takiff superalgebra} associated with \mbox{$\wtg$}: 
\begin{align*}
	&\wtg_\ell:= \wtg\otimes \C[\theta]/(\theta^{\ell+1}),
\end{align*} where $\theta$ is an even element. 
Such a Lie superalgebra and its variations have appeared many times in the mathematics and physics literature (e.g., see,  \cite{BR13,Sa14,BC15, GM17,  Q20, ChCo22, CCS24}).

To explain the main results of the paper in more detail, we start by explaining
our   setup. Throughout the paper,  we assume that $$\ell>0,$$ and    $\wtg$ is  a basic classical or a periplectic Lie superalgebra in the sense of Kac \cite{Ka1}; see the list \eqref{eq::Kaclist}. In particular, $\wtg$ is a finite-dimensional quasi-reductive  Lie superalgebra in the sense of \cite{Se11}. This means that $\g$ ($=\wtg_\oa$)
is a reductive Lie algebra, and $\wtg_\ob$
is semisimple as a $\g$-module. Such a Lie superalgebra admits the notion of triangular decompositions;  see also \cite{Se11, Ma14,CCC21}.
In the paper, we will fix the triangular decomposition of $\wtg$ as in \eqref{eq::tria}:
\begin{align}
	&\wtg =\widetilde{\mf n}\oplus \mf h \oplus \widetilde{\mf n}^-, \label{eq::tria}
\end{align} with a purely even Cartan subalgebra $\h$ and and nilradicals $\widetilde{\mf n},~\widetilde{\mf n}^-$ and the corresponding Borel subalgebra $\widetilde{\mf b} = \mf h\oplus \widetilde{\mf n}$.

 \subsubsection{The category $\wto_\ell$} \label{sect::123}
 We shall identify $\wtg$ with the subalgebra $\wtg\otimes 1\subseteq \wtg_\ell$. 
 For any subalgebra $\mf s $  of $\wtg$, we define the subalgebra $\mf s_\ell:= \mf s\otimes \C[\theta]/(\theta^{\ell+1})= \bigoplus_{i=0}^{\ell}\mf s \otimes \mathbb C\theta^i\subseteq \wtg_\ell$.  Then the triangular decomposition from \eqref{eq::tria} induces a  decomposition of $\wtg_\ell$: 
\begin{align}
	&\wtg_\ell = \widetilde{\mf n}_\ell \oplus {\mf h}_\ell \oplus  \widetilde{\mf n}_\ell^-.  \label{eq::4}
\end{align} 
Similar to the Lie algebra module case, we define the category $\wto_\ell$ as the category  consisting of all finitely generated $\wtg_\ell$-modules  on which $\h$ acts semisimply and $\widetilde{\mf b}_\ell$ acts locally finitely. 
For $\la\in \h^\ast_\ell$, we   extend the $\h_\ell$-module $\C_\la$ to a $\widetilde{\mf b}_\ell$-module by letting $\widetilde{\mf n}_\ell\cdot \C_\la=0$. Similarly, we define the Verma module over $\wtg_\ell$ with highest weight $\la$ via $\widetilde{M}(\la) :=U(\wtg_\ell)\otimes_{U(\widetilde{\mf b}_\ell)} \C_\la.$ As in the Lie algebra case, the Verma modules $\widetilde{M}(\la)$ and their simple tops  $\widetilde{L}(\la)$ are parametrized by  elements $\la \in \h^\ast_\ell$, and these $\widetilde{L}(\la)$ exhaust all simple objects in $\wto_\ell$;  see also Subsection \ref{sect::222}. For $\kappa\in (\hellgeq)^\ast$, we let $\widetilde{\mc O}^\kappa_\ell$ be the full subcategory of $\wto_\ell$  consisting of objects that are restricted to $\g_\ell$-modules in $\mc O^\kappa_\ell$. Then  we have a decomposition $\wto_\ell = \bigoplus_{\kappa\in (\hellgeq)^\ast}\wto_\ell^{\kappa}$.  Again, we refer to the subcategories $\wto_\ell^\kappa$ as the {\em Jordan blocks} in $\wto_\ell$.

\subsubsection{Typical weights} \label{sect::124} 
  For each $0\leq i\leq \ell$, we identify  $\h\otimes \theta^i$ with $\h$ via the  natural isomorphism $h\otimes \theta^i \mapsto h$ ($h\in \h$). Hence, we obtain the  identifications $\h_\ell\cong \h^{\oplus \ell+1}$, $\h^\ast_\ell \cong (\h^\ast)^{\oplus \ell+1}$, $\hellgeq\cong \h^{\oplus \ell}$ and $(\hellgeq)^\ast\cong (\h^\ast)^{\oplus \ell}$.  Transferred via the above identification,   for each $\la\in \h^\ast_\ell$, we can set  the following notations:
\begin{align}
	&\la \equiv (\la^{(0)},\la^{(1)},\ldots ,\la^{(\ell)}) \in (\h^\ast)^{\oplus \ell+1}, \label{eq::blockdec}
\end{align} where $\la^{(i)}$ is induced by the restriction of $\la$ to $\h\otimes \theta^i$. We  let $\la^{\scriptscriptstyle (\geq 1)}\in (\hellgeq)^\ast$  denote the restriction of the weight $\la$ to $\hellgeq$. We shall fix a  non-degenerate symmetric bilinear form $\langle\_,\_\rangle$ on $\h^\ast$ in Subsection \ref{sect::2.1}. Let $\ov{\Phi}^+_\ob\subseteq \h^\ast$ be the set of all isotropic positive odd roots.   A weight $\la\in \h^\ast_\ell$ is called  {\em typical}  provided that \begin{align*}
&\prod_{\alpha\in \ov{\Phi}^+_\ob}\langle\la^{(\ell)},\alpha\rangle\neq 0, \text{ if $\wtg$ is basic classical};\\
&\prod_{\alpha\in \Phi_\oa^+}\langle\la^{(\ell)},\alpha\rangle\neq 0, \text{ if $\wtg$ is periplectic}.
\end{align*} This is reminiscent of the definition of typical weights in the sense of \cite{Ka78} and \cite{Se02}; see also Subsection \ref{sect::252}. The formulation of a typical weight for the $1$-st Takiff superalgebra associated with $\gl(1|1)$ originates in the earlier work of Babichenko and Ridout \cite{BR13}.

\subsubsection{Takiff superalgebras of type I}
 A quasi-reductive Lie superalgebra $\wtg$ is said to be a {\em type I Lie superalgebra} if it  possesses a $\Z_2$-compatible  $\Z$-gradation of the form
\begin{align}
	&\wtg = \wtg^{1}\oplus \wtg^0\oplus \wtg^{-1}, \label{eq::typeIgr}
\end{align} namely, $ \wtg^0 = \g$ and  $\wtg_\ob = \wtg^{1}\oplus \wtg^{-1}$. We refer to the grading  \eqref{eq::typeIgr}  as a {\em type-I} grading of $\wtg$. 
We are mainly interested in the following    type I	Lie superalgebras from Kac’s list \cite{Ka1}:
\begin{align}
	&\widetilde{\g} = \gl(m|n), \mf{osp}(2|2n),\text{ or } \pn. \label{eq::typeI}
\end{align}    In this case, the triangular decomposition  are supposed to be {\em distinguished},  that is,  $\widetilde{\mf n}^-_\ob = \wtg^{-1}$ and $\widetilde{\mf n}_\ob = \wtg^{1}$.

\subsubsection{Goals} The goal of the present paper is to study several aspects of the representation theory of the $\ell$-th Takiff superalgebras $\wtg_\ell$   associated with a basic classical Lie superalgebra or a periplectic Lie superalgebra. Namely,  the present paper attempts to 
 formulate the typical representations and develop the  odd reflections of $\wtg_\ell$ to  study the characters of finite-dimensional $\wtg_\ell$-modules and typical simple highest weight $\wtg_\ell$-modules and their applications to Whittaker $\wtg_\ell$-modules.

\subsection{The main results} 
\subsubsection{Typical representations and simplicity of Kac modules}  For a complex Lie superalgebra $\mf s$ we denote the category of    finitely-generated $\mf s$-modules by $\mf s\mod$.  We denote the universal enveloping algebra of $\mf s$ by $U(\mf s)$ and its center by $Z(\mf s)$.

For each $\la\in \h^\ast_\ell$, the center $Z(\g_\ell)$ acts on $L(\la)$ by some character, which we shall denote by $\chi_\la(\_):Z(\g_\ell)\rightarrow\C$ and refer to as {\em the central character of $L(\la)$}; see \cite[Subsection 4.1]{CT23} for more details. Similarly, we define $\widetilde{\chi}_\la:  Z(\wtg_\ell)\rightarrow \C$ to be  the central character of $\wtL(\la)$.  
 
 Let $K(\_):\g_\ell\mod \rightarrow \wtg_\ell\mod$ be the Kac induction functor (see Subsection \ref{sect::24}), for type I Lie superalgebras $\wtg$.   Our first main result is the following formulation of {\em typical central characters} and
criteria for simplicity of Kac modules over type I Lie superalgebras:
\begin{thm}  \label{thm::1st}  Let $\wtg_\ell$ be the $\ell$-th Takiff superalgebra associated with  Lie superalgebra $\wtg$.  
 \hskip0.2cm
\begin{itemize}
        \item[(i)] Suppose that $\wtg$ is a basic classical Lie superalgebra.  Then there is a central element $\widetilde{\Omega}\in Z(\wtg_\ell)$ such that 
        $$\widetilde{\chi}_\la(\widetilde{\Omega}) = \prod_{\alpha}\langle \la^{(\ell)}, \alpha\rangle, \text{ for any }\la\in \h^\ast_\ell,$$ where $\alpha$ takes over all positive odd isotropic roots. We refer to central characters of $\wtg_\ell$ which do not vanish at $\widetilde{\Omega}$ as the typical central characters.
	\item[(ii)] 	Suppose that $L$ is a simple module over $\wtg_\ell$ associated with $\wtg=\gl(m|n)$ or  $\mf{osp}(2|2n)$. Let $\widetilde{\chi}$ be the central character of $L$. 
Then we have 
   \[ \widetilde{\chi} \text{ is typical } \Leftrightarrow L\cong K(V), \text{for some simple $\g_\ell$-module $V$.}\] Moreover,    $\widetilde{\chi}_\la$ is typical if and only if $\la$ is typical,    for any  $\la\in\h^\ast_\ell$. 
	\item[(iii)] Suppose that $\wtg =\pn$. Then, for any weight $\la \in \h^\ast_\ell$, we have  $$K(\la)\text{ is simple}\Leftrightarrow \la \text{ is typical.}$$  
\end{itemize}		
\end{thm}

 One of our main strategies of the proof of Theorem \ref{thm::1st} is to develop an analogue of the odd reflections for Takiff superalgebras, which was  introduced by Leites, Saveliev and Serganova  \cite{LSS86}; see Subsection \ref{sect24::OddRef}. As a consequence, we give in Theorem \ref{thm24::characters} a complete description of irreducible characters of finite-dimensional modules over the type I Takiff superalgebras $\wtg_\ell$ associated with Lie superalgebras $\wtg$ of types $\mf{gl}, \mf{osp}$ or $\mf p$. This extends the earlier work of Babichenko and Ridout in \cite{BR13}, where the simple  modules over the  $1$-st Takiff superalgebra  associated with $\gl(1|1)$ were studied.   

\subsubsection{Equivalences between typical Jordan blocks in $\widetilde{\mc O}_\ell$ and Jordan blocks in $\mc O_\ell$} \label{sect::132}

 A Jordan block   $\wto_\ell^\kappa$  is called  {\em typical} if it contains a simple module $\wtL(\la)\in \wto_\ell$ with a typical  highest weight $\la \in \h^\ast_\ell$. Namely, if we write $\kappa = (\kappa^{(1)},\ldots, \kappa^{(\ell)})\in  (\h^\ast)^{\oplus \ell}$, transferred via the identification  $(\hellgeq)^\ast\cong (\h^\ast)^{\oplus \ell}$ from Subsection \ref{sect::124}, then $\wto_\ell^\kappa$ is  typical  if and only if  $\langle\kappa^{(\ell)},\alpha\rangle\neq 0$, for all isotropic odd roots $\alpha$. 
   Our second main result is the following.  	
 
\begin{thm}  \label{thm::2nd} Suppose that $\wtg=\gl(m|n)$ or $\mf{osp}(2|2n)$. Let  $\wto_\ell^\kappa$ be a typical Jordan block of $\wto_\ell$, for some  $\kappa\in (\hellgeq)^\ast$. Let $X_\kappa\subset \h^\ast_\ell$ be the set of all highest weights of simple objects in $\wto^\kappa_\ell$. Then  the Kac induction functor  $K(\_)$ restricts to an equivalence of categories 
	\begin{align*}
		&K(\_):\mc O_\ell^\kappa \xrightarrow{\cong} \wto_\ell^\kappa,  
	\end{align*} sending  $  M(\la)$ to   $\widetilde M(\la)$ and $L(\la)$ to $\wtL(\la)$, for $\la\in X_\kappa$, respectively.
\end{thm}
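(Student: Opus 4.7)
The strategy is to exhibit $K$ as the left adjoint in an adjunction $(K,R)$ between $\g_\ell\mod$ and $\wtg_\ell\mod$ and verify that both the unit and the counit are isomorphisms on the typical blocks. Write $\mf p_\ell:=\g_\ell\oplus\wtg^{1}_\ell$ for the parabolic subalgebra of $\wtg_\ell$ determined by the type I grading. Any $V\in\g_\ell\mod$ becomes a $\mf p_\ell$-module upon declaring $\wtg^{1}_\ell$ to act by zero (consistent because $\wtg^{1}$ is a $\g$-submodule of $\wtg$), and $K(V)=U(\wtg_\ell)\otimes_{U(\mf p_\ell)}V$. Induction in two stages through $\widetilde{\mf b}_\ell\subseteq\mf p_\ell$ gives $K(M(\la))=\widetilde{M}(\la)$ for every $\la\in\h^\ast_\ell$, and the PBW decomposition $U(\wtg_\ell)\cong\Lambda(\wtg^{-1}_\ell)\otimes U(\mf p_\ell)$ as right $U(\mf p_\ell)$-modules shows that $K$ is exact. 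The hypothesis that the Jordan block is typical means every $\la\in X_\kappa$ satisfies $\langle\la^{(\ell)},\alpha\rangle\neq 0$ for all $\alpha\in\Phi^-_\ob$, so Theorem \ref{thm::1st}(i) gives $K(L(\la))=\wtL(\la)$. Hence $K$ restricts to an exact functor $\mc O^\kappa_\ell\to\wto^\kappa_\ell$ that matches simples with simples and Vermas with Vermas.

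Next I would introduce the right adjoint $R\colon\wtg_\ell\mod\to\g_\ell\mod$ defined by $R(N):=N^{\wtg^{1}_\ell}$, which sits in the parabolic adjunction $\Hom_{\wtg_\ell}(K(V),N)\cong\Hom_{\g_\ell}(V,R(N))$. The critical technical step is to verify that the unit $\eta_V\colon V\to RK(V)$ is an isomorphism for every $V\in\mc O^\kappa_\ell$. Using the PBW identification $K(V)\cong\Lambda(\wtg^{-1}_\ell)\otimes_\C V$ as $\mf h_\ell$-graded vector space, one decomposes an element of $K(V)^{\wtg^{1}_\ell}$ as a sum $\sum_{k\geq 0}\omega_k\otimes v_k$ with $\omega_k\in\Lambda^k(\wtg^{-1}_\ell)$, and the invariance relation translates into a triangular system whose leading operator on the $k\geq 1$ layers is, up to strictly lower-degree corrections, multiplication by the central element $\Omega\in Z(\g_\ell)$ of Theorem \ref{thm::1st}(i). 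Typicality forces $\Omega$ to act invertibly on $V$, which allows one to solve the system recursively and conclude that only the $k=0$ component survives, i.e.\ $RK(V)=V$.

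Once $RK\cong\mathrm{Id}$ on $\mc O^\kappa_\ell$, fully faithfulness of $K$ on the typical block is immediate from the adjunction, since $\Hom_{\wtg_\ell}(K(V),K(V'))\cong\Hom_{\g_\ell}(V,RK(V'))\cong\Hom_{\g_\ell}(V,V')$. For essential surjectivity, I would show that the counit $\varepsilon_N\colon KR(N)\to N$ is an isomorphism for every $N\in\wto^\kappa_\ell$. On simples $N=\wtL(\la)=K(L(\la))$ this follows at once from $RK\cong\mathrm{Id}$, and the general case reduces to this by d\'evissage, using the exactness of $K$ together with the exactness of $R$ on the typical block. The main anticipated obstacle is precisely the exactness of $R$: a priori $R$ is only left exact, and proving the vanishing of its first derived functor on $\wto^\kappa_\ell$ is what makes the five-lemma argument run. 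I expect this vanishing to follow from the same $\Omega$-invertibility mechanism that governed the unit-isomorphism argument, interpreted cohomologically as a Koszul-type resolution computation for the Lie superalgebra cohomology of $\wtg^{1}_\ell$ with coefficients in a typical module.
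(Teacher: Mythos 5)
Your adjunction framework $(K,R)$ with $R(N)=N^{\wtg^1_\ell}$ is exactly the paper's setup, and your observation that typicality forces $K$ to send simples to simples is the right entry point. But there are two genuine gaps that the paper's proof is specifically designed to avoid.

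First, your ``triangular system'' argument for $RK\cong\mathrm{Id}$ and your Koszul-cohomological argument for exactness of $R$ are asserted rather than proved, and I do not think the triangular picture is as clean as you suggest: the $\wtg^1_\ell$-invariance condition on $\sum_k\omega_k\otimes v_k$ is a genuine system of contraction equations, and the element $\Omega$ only arises as the leading term of the \emph{single} product $X^+X^-$ of top exterior powers, not as the leading operator of the full invariance system layer by layer. The paper never attempts this direct computation. Instead it proves $\phi_V$ (your $\eta_V$) is an isomorphism for finite-length $V$ by the exactness of $K$, the simplicity of $K(L(\mu))$ from Proposition~\ref{lem::4}, induction on length, and the short five lemma (Lemma~\ref{lem::77}); and it proves $\psi_M$ (your $\varepsilon_N$) is an isomorphism without ever needing exactness of $R$: the cokernel vanishes by a separate structural result, Lemma~\ref{coro::6}, which shows that every indecomposable $M\in\wto^\kappa_\ell$ is generated by $M^{\wtg^1_\ell}$ (this uses a weight-support argument together with Proposition~\ref{lem::blockdec}), and the kernel $C$ then vanishes because $C^{\wtg^1_\ell}=0$ and $\wtg^1_\ell\subseteq\widetilde{\mf n}^+_\ell$, so a nonzero highest-weight-filtered $C$ would have a nonzero $\wtg^1_\ell$-invariant vector. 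This sidesteps the $H^1(\wtg^1_\ell;-)$ vanishing question entirely, which is the obstacle you correctly flagged but did not resolve.

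Second, and more seriously, your d\'evissage reductions (``the general case reduces to this by d\'evissage'') implicitly assume finite length, but as Remark~\ref{rem::9}'s neighbouring Remark points out, the categories $\mc O_\ell$ and $\wto_\ell$ are not artinian, so objects of $\mc O^\kappa_\ell$ and $\wto^\kappa_\ell$ need not have finite length and a naive induction terminates nowhere. The paper handles this by first establishing the isomorphisms for finite-length objects (Lemma~\ref{lem::77}), then passing to arbitrary $V$ via the exhaustive descending filtration of \cite[Lemma 6.2]{CT23} and a contradiction argument comparing a hypothetical nontrivial invariant in $\Lambda^k(\wtg^{-1}_\ell)\otimes V$ with $k>0$ against its image in the finite-length quotient $V/V_p$. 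Without an analogue of this reduction your argument does not close even granting everything you assert about $\Omega$.
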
 

 When combined with \cite[Theorem 1.1, Corollary 6.8]{CT23},  Theorem \ref{thm::2nd}  provides a complete solution to the composition multiplicity problem of Verma modules over $\wtg_\ell$ with typical highest weights,  in terms of the Kazhdan-Lusztig polynomials for $\g$.  
 The equivalence in Theorem \ref{thm::2nd} is reminiscent of a remarkable result of Gorelik \cite{Go02} in the setting of category $\mc O$ for a basic Lie superalgebra $\wtg$. In  loc. cit., Gorelik proved that the so-called strongly typical blocks of the category $\mc O$ for $\wtg$ are
 equivalent to the corresponding blocks of the category $\mc O$ for $\g$.

\subsubsection{Simplicity of the  standard Whittaker modules}

Let   $\g$ be a finite-dimensional semisimple Lie algebra  with a triangular decomposition $\g =\mf n\oplus \mf h\oplus \mf n^-$. In his 1978 seminal paper \cite{Ko78},  Kostant  studied a family of locally $\mf n$-finite  simple  $\g$-modules $Y_{\zeta,\eta}$ containing the so-called {\em Whittaker vectors}
associated with {\em a non-singular character}  $\zeta:\mf n\rightarrow\C$,  namely,  $\zeta$ is a character that does not vanish on any simple root vector of $\g$. In recent years, following the   work of Kostant, there has been a recent surge of considerable interest in Lie algebra representations  that are locally finite over nilpotent subalgebras, which we refer to as  {\em Whittaker modules};  see also \cite{Mc85, Mc93, MiSo97, B97,  BM11, CoM15, BR20, AB21}. A large part of the motivation comes from the  connections between the Whittaker modules over Lie algebras and the representations of  associated finite $W$-algebras and other areas; see, e.g., \cite{Sev00, Lo10, Lo10b, Wa11}.

The most elementary among the Whittaker modules over a reductive Lie algebra $\g$ with respect to the nilradical $\mf n$ is the so-called {\em standard Whittaker
module}, which is doubly parameterized by central characters of $\g$ and characters of $\mf n$.  These modules were  originally  introduced   by  Mili{\v{c}}i{\'c} and Soergel \cite{MiSo97} and McDowell   \cite{Mc85,Mc93}, and subsequently, they have played an important role in various investigations. 
We also refer to \cite{BCW14, C21a,  CC23, CC22, CCM23} for   more recent results on Whittaker modules over quasi-reductive Lie superalgebras and their connections with finite $W$-superalgebras.

While Whittaker modules for reductive Lie algebras are, by now, well-understood, their Takiff analogues  were studied in detail only very recently. In a recent exposition \cite{He18, He22}, He initiated the study of Whittaker modules over the Takiff algebras $\g_\ell$ associated with semisimple Lie algebras $\g$ with respect to the Takiff  subalgebras  $\mf n_\ell$. 
In particular, in loc. cit. the author obtained a class of simple Whittaker $\g_\ell$-modules, associated with certain  non-singular characters of $\mf n_\ell$; see Subsection \ref{sect::42}. Subsequently, it was proved by Xia \cite{X23}   that the following  $\g_\ell$-modules exhaust all simple Whittaker modules (in a slightly different form) associated to non-singular characters $\zeta: \mf n_\ell\rightarrow \C$:
$$L(\chi,\zeta) := {U}(\mathfrak{g}_{\ell})\otimes_{Z(\mathfrak{g}_{\ell})U(\mathfrak{n}_{\ell})}\C_{\chi,\zeta},$$
where $\chi: Z(\g_\ell)\rightarrow\C$ is  a central character and $\C_{\chi,\zeta}$ is the associated one-dimensional $Z(\g_\ell)U(\mf n_\ell)$-module; see Lemma \ref{cor::26}. In light of these results, we  realize the category of Whittaker  $\g_\ell$-modules associated with an arbitrary non-singular character of $\mf n_\ell$ as the category of finite-dimensional modules over $Z(\g_\ell)$, in spirit of Kostant's equivalence \cite{Ko78}; see Proposition \ref{prop::27}.

 The final piece of motivation for the present paper is to study Whittaker modules over the Takiff superalgebras considered in the present paper. 
 We assume that $\wtg_\ell$ is a Takiff superalgebra  associated with  $\wtg = \gl(m|n), \mf{osp}(2|2n),$ or $\pn$ with a distinguished triangular decomposition. Fix a non-singular character  $\zeta$ of the the even subalgebra $\mf n_\ell$ of $\widetilde{\mf n}_\ell$. In the paper, a $\wtg_\ell$-module $M$ is said to be a Whittaker module associated to $\zeta$ provided that $M$ is  finitely generated over $U(\wtg_\ell)$, locally finite over $Z(\g_\ell)$ and $x-\zeta(x)$ acts on $M$ locally nilpotently, for any $x\in \mf n_\ell$. 
Denote by $\widetilde{\mc N}_\ell(\zeta)$ the category of all   Whittaker modules associated to $\zeta$.  In contrast to the category $\mc \wto_\ell$,  we prove that every object in $\widetilde{\mc  N}_\ell(\zeta)$ has finite length; see also Proposition \ref{prop::27}. For a central character $\chi:Z(\g_\ell)\rightarrow\C$, we define the corresponding {\em standard Whittaker module} over $\wtg_\ell$:
$$\widetilde M(\chi,\zeta) : = K(L(\chi,\zeta)).$$

The following is our third main result: 
\begin{thm} \label{thm::3rd} 
Suppose that  $\wtg=\gl(m|n),  \mf{osp}(2|2n)$ or $\pn$.  Let $\zeta:  \mf{n}_\ell\rightarrow \C$ be a non-singular character.  	 Then the isomorphism classes of  simple objects in $\widetilde{\mc N}_\ell(\zeta)$ can be represented  as the simple tops of $\widetilde M(\chi,\zeta)$, for characters $\chi: Z(\g_\ell)\rightarrow\C$.  Furthermore, for  $\wtg=\gl(m|n)$, or $\mf{osp}(2|2n)$ with $\la \in\h^\ast_\ell$, we have  
\begin{align}
&\widetilde M(\chi_\la,\zeta) \text{ is simple } \Leftrightarrow  \la \text{ is typical}.  \label{eq::9}
\end{align}  
\end{thm}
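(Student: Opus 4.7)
The plan is to combine Frobenius reciprocity for the Kac functor $K$ with Xia's classification of simple Whittaker $\g_\ell$-modules (accessed through Proposition \ref{prop::27}) to establish the classification statement, and then to deduce the simplicity criterion (\ref{eq::9}) as a direct application of Theorem \ref{thm::1st}(i).

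The crucial observation for the classification is that the odd piece $\wtg^1 \subseteq \wtg$ is an abelian super Lie algebra, so $\wtg^1_\ell$ is purely odd and abelian and $U(\wtg^1_\ell)$ is a finite-dimensional exterior algebra. Consequently every $\wtg_\ell$-module is locally $\wtg^1_\ell$-finite, and a super Engel argument forces $M^{\wtg^1_\ell} \neq 0$ whenever $M \neq 0$. Given a simple $M \in \widetilde{\mc N}_\ell(\zeta)$, the PBW decomposition $U(\wtg_\ell) \cong U(\wtg^{-1}_\ell) \otimes U(\g_\ell) \otimes U(\wtg^1_\ell)$ together with finite-dimensionality of the outer factors makes $M$ finitely generated as a $U(\g_\ell)$-module; by the Noetherianness of $U(\g_\ell)$, the $\g_\ell$-submodule $M^{\wtg^1_\ell}$ is then finitely generated as well. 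Since it inherits the local $Z(\g_\ell)$-finiteness and the $\mf n^+_\ell$-Whittaker condition from $M$, it lies in $\mc N_\ell(\zeta)$, so by Proposition \ref{prop::27} it has finite length and contains a simple submodule $L(\chi,\zeta)$ for some $\chi: Z(\g_\ell) \to \C$. Viewing $L(\chi,\zeta)$ as a $(\g_\ell \oplus \wtg^1_\ell)$-module with $\wtg^1_\ell$ acting trivially, Frobenius reciprocity for $K = U(\wtg_\ell) \otimes_{U(\g_\ell \oplus \wtg^1_\ell)} -$ turns the inclusion $L(\chi,\zeta) \hookrightarrow M^{\wtg^1_\ell} \subseteq M$ into a nonzero $\wtg_\ell$-module map $\widetilde M(\chi,\zeta) \to M$, which is surjective by simplicity; hence $M$ is the simple top of $\widetilde M(\chi,\zeta)$. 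Conversely, a standard filtration argument using the finite-dimensionality of $U(\wtg^{-1}_\ell)$ shows that $\widetilde M(\chi,\zeta)$ itself lies in $\widetilde{\mc N}_\ell(\zeta)$ and has finite length, so its simple tops are genuine objects of this category.

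The simplicity criterion (\ref{eq::9}) then follows immediately from Theorem \ref{thm::1st}(i): the $\g_\ell$-module $V := L(\chi_\la,\zeta)$ is simple with central character $\chi_\la$, so $\widetilde M(\chi_\la,\zeta) = K(V)$ is simple if and only if $\chi_\la(\Omega) \neq 0$, which is precisely the typicality of $\la$. The principal technical subtlety in the proof is verifying that $M^{\wtg^1_\ell}$ genuinely lies in $\mc N_\ell(\zeta)$; the local $Z(\g_\ell)$-finiteness and the $\mf n^+_\ell$-Whittaker conditions transfer transparently to the submodule, but finite generation over $U(\g_\ell)$ must be extracted from the Noetherianness argument sketched above rather than read off directly. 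Once this is in place, the rest of the argument is essentially formal.
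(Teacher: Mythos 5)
Your overall strategy is close to the paper's, which routes the first assertion through Theorem~\ref{thm::29} (itself invoking Theorem~A of~[CM21]) and the second assertion through Theorems~\ref{thm::1st} and~\ref{prop::Kacsimple} and Proposition~\ref{lem::4}. The differences: you re-derive the ``$V_S \in \mc N_\ell(\zeta)$'' step from scratch via a super Engel argument and Noetherianity of $U(\g_\ell)$ rather than going through the paper's Lemma~\ref{lem16}, and you land on the simple submodule of $M^{\wtg^1_\ell}$ rather than identifying $M^{\wtg^1_\ell}$ itself as a simple $\g_\ell$-module. Both re-derivations are sound; the finite generation, Engel, and Frobenius-reciprocity steps all go through. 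The second half, applying Theorem~\ref{thm::1st}(i) with $V = L(\chi_\la,\zeta)$ to get the typicality criterion, is exactly right.

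There is, however, a genuine gap in the classification argument. Your Frobenius-reciprocity construction produces a nonzero, hence surjective, map $\widetilde M(\chi,\zeta)\to M$; this shows $M$ is \emph{a} simple quotient of $\widetilde M(\chi,\zeta)$, but the theorem asserts $M$ is \emph{the} simple top of $\widetilde M(\chi,\zeta)$. For that conclusion you need to know that $\widetilde M(\chi,\zeta) = K(L(\chi,\zeta))$ has a \emph{unique} maximal submodule (equivalently, a simple top) — otherwise ``the simple top'' is not even well-defined. This uniqueness is precisely what~[CM21, Theorem~A] supplies for Kac modules over simple $\g_\ell$-modules, and it is what the paper explicitly establishes in Theorem~\ref{thm::29}(1) before proceeding to the classification. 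Your ``standard filtration argument'' at the end shows $\widetilde M(\chi,\zeta)\in\widetilde{\mc N}_\ell(\zeta)$ and has finite length, which guarantees it has simple quotients, but not that the quotient is unique. You should either cite~[CM21, Theorem~A] (or the socle/top analysis of Proposition~\ref{lem::4}-type computations) to get the simple-top property, or phrase the first assertion only in terms of ``simple quotients''; as written, the step ``hence $M$ is the simple top'' is not justified.
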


\subsection{Structure of the paper}  The paper is organized as follows. In Subsections \ref{sect::200}, \ref{sect::2.1}, we set up the usual description of the   triangular decompositions of quasi-reductive Lie superalgebras.  In Subsections \ref{sect::22}, we provide some background materials on the category $\wto_\ell$ of the Takiff superalgebras. Subsection \ref{sect24::OddRef} is devoted to   the description of odd reflection of Takiff superalgebras. The formulation  of typical representations of Takiff superalgebras is given in Subsection \ref{sect::24typical}. In the remaining parts of Subsection \ref{sect::pre}, we develop the Kac induction functors along with some other general technical results and  preparatory tools that are to be used in the sequel. 

In Section \ref{sect::eqv}, we focus on   typical Jordan blocks in the category $\wto_\ell$.     The proof of Theorem \ref{thm::1st} is established in Subsection \ref{sect::322}.  We give in Subsection \ref{sect::ch33} a complete description of characters of irreducible finite-dimensional modules over Takiff superalgebras associated with type I Lie superalgebras. The proof of Theorem \ref{thm::2nd} is established in Subsection \ref{sect::34}. In Remark \ref{rem26::gl11}, one can also
find a detailed example of a complete description of all simple modules for Takiff $\gl(1|1)$.  In Section \ref{sect::4}, we develop Whittaker modules over the Takiff superalgebras and review some tools from the representation theory of finite $W$-algebras associated with the Takiff algebras. We put  these results together to obtain the proof of Theorem \ref{thm::3rd}. Finally, Appendix \ref{App::blocks} is devoted to a description of linkages in the category $\wto_\ell$ of the Takiff $\gl(m|n)$ and Takiff $\mf{osp}(2|2n)$.

\vskip 0.3cm
{\bf  Acknowledgment}.  Chen is partially supported by National Science and Technology Council grants of the R.O.C. and further acknowledges support from  the National Center for Theoretical Sciences,  and he  thanks Volodymyr Mazorchuk and Uhi Rinn Suh for useful and stimulating discussions. Wang is partially supported by the National Natural Science Foundation of China (Nos.12071026, 12471025), and supported by Anhui Provincial Natural Science Foundation 2308085MA01. We are very grateful to Shun-Jen Cheng and   Maria Gorelik for  helpful discussions and  comments.
\vskip 0.3cm
\section{Preliminaries} \label{sect::pre}

Throughout the paper,  the symbols $\Z$ and  $\Z_{\geq 0}$ stand for the sets of all  and non-positive integers, respectively. All vector spaces and algebras  are assumed to be over the field $\C$ of complex numbers.

\subsection{Triangular decompositions}  \label{sect::200} Throughout, we assume that  $\wtg$ is one of
 the following examples of quasi-reductive Lie superalgebras in Kac’s list \cite{Ka1}:
\begin{equation}
\begin{aligned} \label{eq::Kaclist}
&\text{(Basic classical)}~\gl(m|n),~{\mf{sl}}(m|n),~{\mf{osp}}(m|2n), ~D(2,1;\alpha),~ G(3), ~F(4); \\
&\text{(Periplectic)}~\mf{p}(n).
\end{aligned} 
\end{equation} In this article, we exclude the family of queer Lie superalgebras. We also refer to, e.g., \cite{CW12, Mu12} for more details about these Lie superalgebras. Recall that we define $\g:=\wtg_\oa$. We fix a Cartan subalgebra $\h$ of $\g$ and let  $\Phi$ denote the set of all roots. For each root $\alpha\in \Phi$, let $\wtg^\alpha$ be the root space of $\wtg$ corresponding to $\alpha$, that is,
\begin{align}
	&\wtg^\alpha := \{x\in \wtg|~[h, x] =\alpha(h)x, \text{ for all }h\in \h\}. \label{eq::rtspace}
\end{align}   

We fix a {\em triangular decomposition} in the sense of \cite[Section 2.4]{Ma14}:
	\begin{align} \label{eq::tria}
	&\wtg=\widetilde{\mf n}^-\oplus \mf h \oplus \widetilde{\mf n},
	\end{align}   that is,   there exists $H\in \mf h$  such that $$ \wtg^0:=\bigoplus_{\text{Re} \alpha(H)=0}  \wtg^\alpha =\mf h,\quad\widetilde{\mf n}:=\bigoplus_{\text{Re} \alpha(H)>0} \mf g_\alpha,\quad\widetilde{\mf n}^-:=\bigoplus_{\text{Re} \alpha(H)<0} \mf g_\alpha,$$
where $\text{Re}(z)$ denotes the real part of $z\in \mathbb C$. In this case, we define the {\em Borel subalgebra} $\widetilde{\mf b}$ to be $\mf h\oplus \widetilde{\mf n}$. Let  $\Phi_\oa$ and $\Phi_\ob$ stand for the sets of even and odd roots,  respectively. We use $\Phi^+$ and $\Phi^-$ to denote the sets of all  positive roots and negative roots, respectively. Finally, we define  $ \Phi_i^+ :=\Phi^+\cap \Phi_i$  and    $\Phi_i^-:=\Phi^-\cap \Phi_i $, for any $i\in \Z_2$.  

\subsection{Lie superalgebras of type I}  \label{sect::2.1}
In this subsection, we explain our precise setup of the distinguished triangular decompositions of the type I Lie superalgebras $$\wtg= \gl(m|n),\mf{osp}(2|2n) \text{ and }  \pn, \text{  for $m,n\in \Z_{\geq 0}$,}$$  and their type-I gradings 
$$\wtg=\wtg^1\oplus \wtg^0\oplus \wtg^{-1}.$$ We refer to \cite{CW12, Mu12} for further details. The  corresponding Borel subalgebra $\widetilde{\mf b} =\widetilde{\mf b}_\oa\oplus \wtg^1$, as introduced below, will be called  distinguished.  We also define the {\em reverse} Borel subalgebra $\widetilde{\mf b}^r:= \widetilde{\mf b}_\oa \oplus \wtg^{-1}$.

\subsubsection{The general linear Lie superalgebra $\gl(m|n)$}   

The general linear Lie superalgebra $\wtg:=\gl(m|n)$ may be realized as
the space of $(m+n)$ by $(m+n)$ \mbox{complex matrices}  
\begin{align*}
	  \left( \begin{array}{cc} A & B\\
		C & D\\
	\end{array} \right),
\end{align*} where $A\in \C^{m\times m}, B\in \C^{m\times n}, C\in \C^{n\times m}, D\in \C^{n\times n}$, with Lie bracket
given by the super commutator; see also \cite[Section 1.1.2]{CW12} for more details.

The subalgebras $\wtg^1$ and $\wtg^{-1}$ are given by 
\begin{align*}
	&\wtg^0=\g\cong \gl(m)\oplus \gl(n), \\
	&\wtg^1:=
	\left\{\begin{pmatrix}
		0 & B \\
		0 & 0
	\end{pmatrix}\|~B\in \C^{m\times n}\right\}\quad\mbox{and}\quad \wtg^{-1}:=
	\left\{\begin{pmatrix}
		0 & 0 \\
		C & 0
	\end{pmatrix}\|~C\in \C^{n\times m}\right\}.
\end{align*}

 Denote by  $E_{ij}$   the elementary matrix in $\gl(m|n)$ with $(i,j)$-entry $1$ and other entries $0$, for $1\leq i,j\leq m+n$. We fix the Cartan subalgebra $\h:=\bigoplus_{i=1}^{m+n}\C E_{ii}$ consisting of all diagonal matrices. Let 
$\vare_i$ and $\delta_j$ denote the standard dual basis elements for $\h^\ast$  determined by $\vare_j(E_{ii}) =\delta_{ij}$,  $\delta_p(E_{m+q,m+q}) = \delta_{pq}$, and $\vare_j(E_{m+q,m+q}) = \delta_p(E_{ii}) =0$, for any $1\leq i,j\leq m$ and $1\leq p,q\leq n.$
The corresponding sets of even and odd roots are given by
\begin{align*}
	&\Phi_\oa:= \{\vare_i -\vare_j,~\delta_{p}-\delta_q|~1\leq i\neq  j\leq m,~1\leq p\neq  q\leq n\},\\
	&\Phi_\ob:=\{\pm(\vare_i-\delta_p)|~1\leq i\leq m,~1\leq p\leq n\},
\end{align*} respectively.
We fix the positive system $\Phi^+$ as follows:  
\begin{align*}
	\Phi^+&:=\{\vare_i -\vare_j,~\delta_{p}-\delta_q|~1\leq i< j\leq m,~1\leq p< q\leq n\} \\	&\cup\{\vare_i-\delta_p|~1\leq i\leq m,~1\leq p\leq n\}.
\end{align*}
The set $\Phi^-$ of all negative roots is given by  $\Phi^-:= -\Phi^+$.

We let  $\langle\_,\_\rangle:\h^\ast\times \h^\ast\rightarrow \C$ be the bilinear form on $\h^\ast$ induced by the   standard super-trace form, that is, $\langle\vare_i,\vare_j\rangle =\delta_{ij}$ and $\langle\delta_p,\delta_q\rangle =-\delta_{pq}$ and $\langle\vare_i,\delta_p\rangle=0$, for any $1\leq i,j\leq m$ and $1\leq p,q\leq n$.

\subsubsection{The orthosymplectic Lie superalgebra $\mf{osp}(2|2n)$}  \label{sect::212}

 The orthosymplectic Lie superalgebra $\mf{osp}(2|2n)$ is a   
subalgebra of $\mf{gl}(2|2n)$ with the following   realization:
\begin{equation}
	\widetilde{\g}:=\mf{osp}(2\vert 2n)=
	\left\{ \left( \begin{array}{cccc} c &0 & D &E\\
		0 & -c& F & G\\
		-G^t& -E^t & A &B\\
		F^t &D^t & C& -A^t \\
	\end{array} \right)\|~
	\begin{array}{c}
		c\in \C;\,\, D,E,F,G\in \C^{1\times n};\\
		A,B,C\in \C^{n\times n};\\
		B^t =B, C^t=C.
	\end{array}
	\right\}. \label{eq::osp}
\end{equation}
We refer to \cite[Section 1.1.3]{CW12} for further details.  
The type-I grading is given by $\wtg^0:=\g\cong \C \oplus \mf{sp}(n)$ and  
\begin{align*}
&\wtg^1:=
\left\{ \left( \begin{array}{cccc} 0 &0 & D &E\\
	0 & 0& 0 & 0\\
	0& -E^t & 0 &0\\
	0 &D^t & 0& 0 \\
\end{array} \right)\|~
\begin{array}{c}
	 D,E\in \C^{1\times n}
\end{array}
\right\},\\
&\wtg^{-1}:=
\left\{ \left( \begin{array}{cccc} 0 &0 & 0 &0\\
	0 & 0& F & G\\
	-G^t& 0 & 0 &0\\
	F^t &0 & 0& 0 \\
\end{array} \right)\|~
\begin{array}{c}
	F,G\in \C^{1\times n}
\end{array}
\right\}.
\end{align*}

Set $H_\vare:=E_{11}-E_{22},~H_i:=E_{2+i,2+i}-E_{2+i+n,2+i+n},$  for $1\leq i\leq n.$ 
We fix the standard Cartan subalgebra $\mf h:=  \C H_\vare \oplus \bigoplus_{i=1}^n \C H_i$ consisting of diagonal matrices of $\mf{osp}(2|2n)$. Let 
$\{\vare,\delta_1,\delta_2,\ldots,\delta_n\}$ be 
the basis of $\mf h^\ast$ dual to  $\{H_\vare,H_1,H_2,\ldots, H_n\}$.  
	The corresponding even and odd roots  are given by \begin{align*}
		&\Phi_\oa:=\{\pm(\delta_i \pm\delta_j), \pm 2\delta_p|~1\leq i< j <n, ~1\leq p \leq n \},\\
		&\Phi_\ob:=\{\pm(\vare \pm \delta_p)|~1\leq p\leq n\}.
\end{align*} 

We fix the positive system as follows:
\begin{align*}
&\Phi^+:=\{\delta_i \pm\delta_j, 2\delta_p|~1\leq i< j <n, ~1\leq p \leq n \} \cup \{\vare \pm \delta_p|~1\leq p\leq n\}.
\end{align*}
Define  $\Phi^-:= -\Phi^+$, as usual. The  bilinear form $\langle\_,\_\rangle:\mf h^\ast \times \mf h^\ast \rightarrow \C$, which is induced by the super-trace form, is given by $\langle\vare,\vare\rangle =1,~\langle\delta_i,\delta_j\rangle =-\delta_{i,j}, \langle\vare,\delta_i\rangle=0, \text{ for $1\leq i,j\leq n$}.$

\subsubsection{The periplectic Lie superalgebra $\pn$}  \label{sect::223pn}
 The periplectic Lie superalgebra $\pn$ may be realized as a subalgebra inside $\gl(n|n)$ 
 \begin{align}
 	\wtg:=\pn=
 	\left\{ \left( \begin{array}{cc} A & B\\
 		C & -A^t\\
 	\end{array} \right)\| ~ A,B,C\in \C^{n\times n},~\text{$B^t=B$ and $C^t=-C$} \right\}.  \label{eq::max::pn}
 \end{align} See also \cite[Section 1.1.5]{CW12} for more details.

Define the type-I grading of $\pn$ via
\begin{align*}
	\wtg^0=\g\cong \gl(n),\hskip0.3cm
	\wtg^1:=
	\left\{\begin{pmatrix}
		0 & B \\
		0 & 0
	\end{pmatrix}\|B^t=B\right\}\quad\mbox{and}\quad \wtg^{-1}:=
	\left\{\begin{pmatrix}
		0 & 0 \\
		C & 0
	\end{pmatrix}\|C^t=-C\right\}.
\end{align*}
   We  may observe that  $\dim \wtg^{1}>\dim \wtg^{-1}$.

 For any $1\leq i,j\leq n$, define  $e_{ij}:=E_{ij}-E_{n+j,n+i}\in\pn$. We fix the Cartan subalgebra  $\h= \bigoplus_{i=1}^n \C e_{ii}$ consisting of diagonal matrices. Let $\{\vare_1, \vare_2, \ldots, \vare_n\}$ denote the basis for $\h^\ast$ dual to the standard basis $\{e_{11},e_{22},\ldots, e_{nn}\}\subset \h$. The sets of even and odd roots are given by 
 \begin{align*}
 	&\Phi_\oa = \{\vare_i-\vare_j|~1\leq i\neq  j <n\}, \\
 	&\Phi_\ob = \{-\vare_i-\vare_j|~1\leq i< j <n\}\cup \{\vare_i+\vare_j|~1\leq i\leq  j <n\}. 
 \end{align*}

We define the set $\Phi^+$ of positive roots and the set $\Phi^-$ of negative roots as follows:
\begin{align*}
&\Phi^+:=\{\vare_i-\vare_j|~1\leq i< j <n\}  	\cup\{\vare_i+\vare_j|~1\leq i\leq j\leq n\}, \\ 
&\Phi^-:=\{-\vare_i+\vare_j|~1\leq i< j <n\}  	\cup\{-\vare_i-\vare_j|~1\leq i< j\leq n\}.
\end{align*}

 Finally, we define the  bilinear form $\langle\_,\_\rangle:\mf h^\ast \times \mf h^\ast \rightarrow \C$ by declaring that  $\langle\vare_i,\vare_j\rangle = \delta_{ij}$, for $1\leq i,j\leq n.$

\subsection{The category $\wto_\ell$ of the Takiff superalgebra $\wtg_\ell$} \label{sect::22} In this section, unless stated otherwise,  we let  $\wtg$ be a is a basic classical or a periplectic  Lie superalgebra from the list \eqref{eq::Kaclist} with  a   triangular decomposition $\wtg =  \widetilde{\mf n} \oplus \mf h \oplus \widetilde{\mf n}^-$. Recall that the $\ell$-th Takiff superalgebra associated with $\wtg$ is given by $$\wtg_\ell:= \wtg\otimes \C[\theta]/(\theta^{\ell+1}), \text{ where $\theta$ is an even variable}.$$ Recall that we identify $\wtg$ with the subalgebra $\wtg \otimes 1\subset \wtg_\ell$. For any subspace $\mf s \subseteq \wtg$, we put $$\mf s_\ell:= \mf s\otimes \C[\theta]/(\theta^{\ell+1}) =\bigoplus_{i=0}^\ell\mf s_\ell^{(i)},~~~\text{ where }~\mf s_\ell^{(i)} : = \mf s \otimes \mathbb C\theta^i,\text{ for all $i$.}$$ We also set $\mf s^{(i)} =0,$ for $i\geq \ell+1$.
This leads to the triangular decomposition  $\wtg_\ell = \widetilde{\mf n}_\ell \oplus {\mf h}_\ell \oplus  \widetilde{\mf n}_\ell^-.$
For $x\in \wtg$, we set  $x^{(i)}:= x\otimes \theta^i\in \wtg_\ell$, for all  $i\geq 0$. 

 For $0\leq i\leq \ell$, recall that we identify $\h^{(i)}$ with $\h$ via the isomorphism $h\otimes \theta^i\mapsto h$, for $h\in \h$. For $\la \in \h^\ast_\ell$, we let  $\la^{(i)}\in \h^\ast$ denote the weight induced by   $ \la|_{\h^{(i)}}$. 

\subsubsection{Jordan block decomposition of $\wto_\ell$} \label{sect::231} Recall that  $\g$ denotes the even subalgebra of $\wtg$. 
We have the exact restriction and induction  functors 
\begin{align*}
&\Res(\_): \wtg_\ell \mod \rightarrow \g_\ell\mod,~~ \Ind(\_): \g_\ell\mod\rightarrow \wtg_\ell\mod.
\end{align*}
 It follows by \cite[Theorem 2.2]{BF93} (see also \cite{Go00}) that $\Ind(\_)$  is isomorphic to the co-induction functor, up to
 the equivalence given by tensoring with a one-dimensional $\g_\ell$-module. 

Recall that the category $\wto_\ell$ is the full subcategory of $\wtg_\ell\mod$ of modules $M$ with $\Res M$ in the category $\mc O_\ell$ of $\g_\ell$ introduced in \cite{CT23} (see also \cite{MS19,Ch23}).   For each $\kappa\in (\hellgeq)^\ast$,  recall that the Jordan block $\widetilde{\mc O}^\kappa_\ell$ is defined as the full subcategory consisting of objects in $ \wto_\ell$ on which $x-\kappa(x)$ acts locally nilpotently, for each $x\in \hellgeq$. 
\begin{lem} \label{lem::44}
	We have a decomposition 
	\begin{align}
	&\wto_\ell =\bigoplus_{\kappa\in (\hellgeq)^\ast} \wto^\kappa_\ell. \label{eq::16}
	\end{align} Furthermore,   the functors
	$\Res(\_):  \wto_\ell^\kappa \rightarrow \mc O_\ell^\kappa$ and $\Ind(\_): \mc O_\ell^\kappa\rightarrow \wto_\ell^\kappa$ are well-defined,  for any $\kappa\in (\hellgeq)^\ast$.
\end{lem}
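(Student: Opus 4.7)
The strategy is to deduce the statement from the Jordan block decomposition $\mc O_\ell = \bigoplus_\kappa \mc O_\ell^\kappa$ for the even subalgebra $\g_\ell$, which is established in \cite{CT23}. Given $M \in \wto_\ell$, I would first verify that $\Res M \in \mc O_\ell$: the conditions of $\h$-semisimplicity and $\mf b_\ell$-local finiteness are inherited directly, while finite generation over $U(\g_\ell)$ follows from the PBW isomorphism $U(\wtg_\ell) \cong U(\g_\ell) \otimes \Lambda((\wtg_\ob)_\ell)$ together with the finite-dimensionality of $\Lambda((\wtg_\ob)_\ell)$. Hence $\Res M = \bigoplus_\kappa (\Res M)^\kappa$ as $\g_\ell$-modules, where $(\Res M)^\kappa$ is the joint generalized $\kappa$-eigenspace for the commuting family $\hellgeq$. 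Proving \eqref{eq::16} then reduces to showing that each $(\Res M)^\kappa$ is stable under the odd part $\wtg_\ob \subseteq \wtg_\ell$.

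For this stability, fix a weight vector $x \in \wtg_\ob$ of $\h$-weight $\alpha$, set $y = x \otimes \theta^j$ for some $0 \leq j \leq \ell$, and take $v \in (\Res M)^\kappa$. For any $h^{(i)} = h \otimes \theta^i \in \hellgeq$, the identity
\[
(h^{(i)} - \kappa(h^{(i)}))\, yv \;=\; y\,(h^{(i)} - \kappa(h^{(i)}))\,v \;+\; \alpha(h)\,(x \otimes \theta^{i+j})\,v
\]
shows that applying $h^{(i)} - \kappa(h^{(i)})$ to $yv$ either decreases the $(h^{(i)}-\kappa(h^{(i)}))$-nilpotency index of $v$, or replaces $y$ by a vector of strictly higher $\theta$-degree. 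A downward induction on $j \leq \ell$ closes the argument: when $j = \ell$, every commutator $[\hellgeq, y]$ vanishes so that $y$ preserves $(\Res M)^\kappa$ trivially; for smaller $j$, the error term lies in $(\Res M)^\kappa$ by induction, and iterating the identity together with the local nilpotency on $v$ yields $yv \in (\Res M)^\kappa$.

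The well-definedness of the two functors in the second half is then routine. For $\Res\colon \wto_\ell^\kappa \to \mc O_\ell^\kappa$, the four defining properties all pass to the $\g_\ell$-restriction, with finite generation handled via PBW as above. For $\Ind\colon \mc O_\ell^\kappa \to \wto_\ell^\kappa$, the PBW isomorphism $\Ind N \cong \Lambda((\wtg_\ob)_\ell) \otimes N$ as vector spaces yields $\h$-semisimplicity (both tensor factors being $\h$-semisimple under the adjoint, respectively given, actions), and local finiteness of $\widetilde{\mf b}_\ell = \mf b_\ell \oplus (\wtg^1)_\ell$ follows by combining the finite-dimensionality of $\Lambda((\wtg^1)_\ell)$ with the tensor product of a finite-dimensional adjoint $\mf b_\ell$-module and the locally $\mf b_\ell$-finite module $N$. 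The generalized $\kappa$-eigenvalue condition transfers via the same $\theta$-degree bookkeeping: the action of $h^{(i)} - \kappa(h^{(i)})$ on a simple tensor $u \otimes n$ decomposes as $u \otimes (h^{(i)} - \kappa(h^{(i)}))\,n$ (nilpotent by hypothesis on $N$) plus terms of strictly higher total $\theta$-degree in the $\Lambda$-factor, which are eventually zero once this degree exceeds $\ell$.

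The only genuine obstacle is the failure of $\hellgeq$ to be central in $\wtg_\ell$, which rules out any naive Schur-type decomposition argument. What rescues the situation is the observation that every commutator of $\hellgeq$ with an element of $\wtg \otimes \theta^j$ lies in $\wtg \otimes \theta^{>j}$, so the non-centrality is itself nilpotent of order at most $\ell+1$; this is the mechanism underlying the downward induction above, and it is reused verbatim in the verification for $\Ind$.
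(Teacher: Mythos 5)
Your proof is correct and follows the same basic route as the paper: both reduce the decomposition \eqref{eq::16} to the even-category decomposition $\mc O_\ell = \bigoplus_\kappa \mc O_\ell^\kappa$ of \cite{CT23}, and both exploit the isomorphism $\Res\Ind N \cong \Lambda((\wtg_\ell)_\ob)\otimes N$. The difference is one of packaging. The paper dispatches \eqref{eq::16} with a pointer to \cite[Corollary 3.8]{Ch23} and handles $\Ind$ by invoking the closure of $\mc O_\ell^\kappa$ under tensoring with finite-dimensional modules; you unpack both steps into an explicit $\theta$-degree computation. Your stability argument for $(\Res M)^\kappa$ under the odd part is precisely the substance of the omitted standard argument, and your bookkeeping for $\Ind$ amounts to a direct proof of the tensor-closure fact the paper cites, so the two proofs are equivalent in content. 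One small observation: the downward induction on $j$ is not actually needed. With $a = h^{(i)} - \kappa(h^{(i)})$ your commutator identity yields the binomial expansion
\[
a^N(yv) \;=\; \sum_{k\geq 0}\binom{N}{k}\,(\ad_{h^{(i)}}^k y)\,(a^{N-k}v),
\]
and since $\ad_{h^{(i)}}^k y \in \wtg\otimes\theta^{ki+j}$ vanishes once $ki+j>\ell$ (as $i\geq 1$), while $a^{N-k}v = 0$ once $N-k$ exceeds the nilpotency index of $v$, choosing $N$ large kills every term at once. Your induction is correct but does more work than necessary.
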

\begin{proof}
	The first assertion about the decomposition \eqref{eq::16} follows by a standard argument similar to  the proof of \cite[Corollary 3.8]{Ch23}, we thus omit the details. By definition, $\wto_\ell^\kappa$ can be defined as the full subcategory of $\wto_\ell$ of modules $M$ with $\Res M \in \mc O_\ell^\kappa$. We note that $U((\wtg_\ell)_\ob)$ can be viewed as the exterior algebra $\Lambda((\wtg_\ell)_\ob)$ of $(\wtg_\ell)_\ob$. Since $\Res \Ind M\cong \Lambda((\wtg_\ell)_\ob)\otimes M$ and $\mc O_\ell^\kappa$ is closed under tensoring with finite-dimensional modules, the second assertion follows.
\end{proof}

The idea of such a decomposition in  \eqref{eq::16} originates from the work of Mazorchuk and S\"oderberg \cite[Theorem 11]{MS19} and Chaffe \cite[Corollary 3.8]{Ch23}  for the $1$-st Takiff algebras associated with semisimple Lie algebras, and it was subsequently extended to the case of  general $\ell$-th Takiff algebras by  Chaffe and Topley \mbox{\cite[Lemma  3.4]{CT23}}.

\begin{lem} \label{lem11} 
   Let $E\in \mc O_\ell$ be  finite-dimensional. Suppose that $V\in \mc O_\ell$ has finite length. Then both  $E\otimes V$ and $\Ind(V)$ are   of finite length. 
\end{lem}
\begin{proof} The arguments in the proof of \cite[Theorem 3.5]{Ko75}  go through for the general $\ell$-th Takiff algebras associated with   reductive Lie algebras.   For the convenience of the reader, we  shall sketch the main ideas of the proof. First, for any object $M\in \mc O_\ell$ we let $M^\ast : =\Hom_\C(M,\C)$ denote the full dual representation of $M$ and we define the submodule  $M^\vee:=\bigoplus_{\nu\in {\h^\ast}} (M^{\nu})^\ast \subseteq M^\ast$ as the  restricted dual of $M$; see also Subsection \ref{sect::A1}.  Then $M$ is noetherian (respectively, artinian) if and only if $M^\vee$ is   artinian (respectively, noetherian); see also \cite[Proposition 3.4]{Ko75}.  By using arguments  analogous to those in the proof of \mbox{\cite[Theorem 3.5]{Ko75}}, we can prove that  $(E\otimes V)^\vee$ and $E^\vee \otimes V^\vee$ are isomorphic. Since $V$ has finite length, it follows that  $E^\vee\otimes V^\vee$ is noetherian by a standard argument; e.g., see \cite[Theorem 1.1]{Hu08} or \cite[Proposition 3.3]{Ko75}. This implies that $(E\otimes V)^\vee$ is noetherian. Consequently, $E\otimes V$ is both artinian and noetherian.  This proves the first assertion. The second assertion follows, since $\Res \Ind(V)\cong \Lambda((\wtg_\ell)_\ob) \otimes V$, 
\end{proof}

\subsubsection{Verma and simple modules} \label{sect::222} In this subsection, we introduce some terminologies, following \cite[Section 3]{CT23} and \cite[Section 4]{Sa14}.  
Let $M\in \wto_\ell$ and $\nu\in \h^\ast$. Define the ($\h$-){\em weight space of weight $\nu$} by 
\begin{align*}
&M^\nu:=\{m\in M|~h\cdot m=\nu(h)m,~\text{for all}~h\in \h\}.	
\end{align*} The elements in $M^\nu$ are called {\em weight vectors of weight $\nu$}.   We define the partial order $\leq  $ on $\h^\ast$ as the transitive closure of the relations 
$$\begin{cases}
	\lambda-\alpha \leq \lambda, &\mbox{for $\alpha\in \Phi^+$},\\
	\lambda+\alpha \leq \lambda, &\mbox{for $\alpha\in \Phi^-$}.
\end{cases}$$ 
Let $\text{supp}(M)=\{\nu\in \h^\ast|~M^\nu\neq 0\}$.    It can be proved by mimicking standard arguments from Lie theory that there is a finite subset $X\subset \h^\ast$ such that  (e.g., see \mbox{\cite[Section 1.1]{Hu08}}) $$\dim M^\nu<\infty~\text{and}~\text{supp}(M)\subseteq \bigcup_{\nu \in X}\{\mu\in \h^\ast|~\mu\leq \nu\}.$$

 Let $\la \in \h^\ast_\ell$. 
 A vector $v\in M$ is called a ($\widetilde{\mf b}_\ell$-){\em highest weight vector of weight $\la$} provided that $\widetilde{\mf n}_\ell\cdot v=0$ and $h\cdot v =\la(h)v,~\text{for all }h\in \h_\ell.$

 A $\wtg_\ell$-module is called a ($\widetilde{\mf b}_\ell$-){\em highest weight module of highest weight $\la$} if it is generated by a highest weight vector of weight $\la$. 
We define the Verma module $$\wtV({\la}):=U(\wtg_\ell)\otimes_{U(\widetilde{\mf b}_\ell)} \C_{\la},$$
where $\C_{\la}$ is a one-dimensional $\widetilde{\mf b}_\ell$-module such that $\widetilde{\mf n}_\ell\cdot \C_\la =0 $ and $ (h-\la(h))\cdot\C_\la=0$, for all $h\in \h_\ell$. We may observe that $\widetilde{M}(\la)$ is a highest weight module of highest weight $\la$.  By definition, each highest weight module is a  quotient of  a Verma module. Recall that $\la^{\scriptscriptstyle (\geq 1)}$ denotes the restriction   $\la|_{\hellgeq}$, for any weight $\la\in \h^\ast_\ell$. We collect some basic properties in the following lemma.
 
\begin{lem} \label{lem::1}
 We have 
  \begin{itemize}
	\item[(i)] Let $\la \in \h^\ast_\ell$. Then the Verma module  $\wtV({\la})$ has a  simple top, which we denote by $\wtL(\la)$. Both modules lie in the Jordan block $\wto_\ell^{\la^{\scriptscriptstyle (\geq 1)}}$.  Furthermore,	the center $Z(\wtg_\ell)$  acts on $\widetilde{M}(\la)$ and $\wtL(\la)$ by some character associated with $\la$, which we denote by $\widetilde{\chi}_\la(\_): Z(\wtg_\ell)\longrightarrow \C$.
 	\item[(ii)] $\{\wtL(\la)|~\la\in \h^\ast_\ell\}$ is a complete list of non-isomorphic simple modules in $\widetilde{\mc O}_\ell$.
 	\item[(iii)] Every module in $\wto_\ell$ admits a finite filtration subquotients of which are highest weight modules. 
\end{itemize}
\end{lem}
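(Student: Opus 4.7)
My plan is to prove each part by adapting classical BGG-type arguments to the Takiff super setting, paying careful attention to the fact that highest weights live in $\h_\ell^\ast$ rather than $\h^\ast$ and to the role of the truncation $\theta^{\ell+1}=0$.

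For (i), I would begin with the PBW identification $\wtV(\la)\cong U(\widetilde{\mf n}_\ell^-)\otimes \C v_\la$ as $\h$-modules. Since every root space in $\widetilde{\mf n}_\ell^-$ has a strictly negative $\h$-weight, the $\h$-weight space at $\la^{(0)}$ is exactly $\C v_\la$. Any proper submodule is $\h$-graded and therefore misses $v_\la$, so the sum of all proper submodules is itself proper; this yields the unique simple top $\wtL(\la)$. For membership in $\wto_\ell^{\la^{\scriptscriptstyle (\geq 1)}}$, I would pick $x=h\otimes\theta^i\in \hellgeq$ and expand $(x-\la(x))\cdot y_1^{(i_1)}\cdots y_k^{(i_k)}v_\la$ by commuting $x$ through the product: each commutator replaces a factor $y_j^{(i_j)}$ by a scalar multiple of $y_j^{(i_j+i)}$, strictly raising the total $\theta$-exponent. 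After finitely many applications every term exceeds exponent $\ell$ and vanishes, establishing local nilpotence of $x-\la(x)$. For the central character statement, any $z\in Z(\wtg_\ell)$ sends the highest weight line $\C v_\la$ to itself, since it commutes with $\widetilde{\mf b}_\ell$ and the space of $\h_\ell$-weight-$\la$ vectors killed by $\widetilde{\mf n}_\ell^+$ is one-dimensional by the same $\h$-weight count; hence $z\cdot v_\la = \widetilde\chi_\la(z)v_\la$, and this scalar action propagates to all of $\wtV(\la)$ and $\wtL(\la)$ by cyclicity.

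For (ii), given a simple $M\in \wto_\ell^\kappa$, local finiteness of $\widetilde{\mf b}_\ell$ produces a finite-dimensional $\widetilde{\mf b}_\ell$-submodule $V\subseteq M$; on $V$ the action of $\h$ is semisimple and $\widetilde{\mf n}_\ell^+$ acts by nilpotent operators (Engel's theorem, using that positive root vectors strictly raise the $\h$-weight). The joint kernel of $\widetilde{\mf n}_\ell^+$ in $V$ is non-zero and preserved by $\h_\ell$; restrict to a single $\h$-weight component $\nu$. On this finite-dimensional space the commuting family $\{x-\kappa(x)\mid x\in \hellgeq\}$ consists of pairwise commuting nilpotent operators, so their common kernel is non-zero and provides a genuine $\h_\ell$-eigenvector $v$ of some weight $\la$ with $\la^{(0)}=\nu$ and $\la|_{\hellgeq}=\kappa$, annihilated by $\widetilde{\mf n}_\ell^+$. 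Then $\wtg_\ell\cdot v$ is a non-zero highest weight quotient of $\wtV(\la)$, so simplicity of $M$ forces $M\cong\wtL(\la)$, and distinct $\la$ yield non-isomorphic simples since $\la$ is read off from the unique highest weight line. Part (iii) follows from the same circle of ideas: pick a finite-dimensional $\widetilde{\mf b}_\ell$-stable subspace $V$ generating $M$ over $\wtg_\ell$, filter $V$ as a $\widetilde{\mf b}_\ell$-module with one-dimensional successive quotients (the finite-dimensional simple $\widetilde{\mf b}_\ell$-modules with semisimple $\h$-action are one-dimensional, by the same Engel plus abelian-$\h_\ell$ argument), and take the induced $\wtg_\ell$-filtration of $M$; each subquotient is generated by a one-dimensional $\widetilde{\mf b}_\ell$-subquotient and is hence a highest weight module.

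The main technical subtlety is the production of an honest $\h_\ell$-eigenvector in (i) and (ii), as opposed to merely an $\h$-weight vector killed by $\widetilde{\mf n}_\ell^+$. This rests on the Jordan-block hypothesis ensuring that each $x-\kappa(x)$ with $x\in \hellgeq$ acts locally nilpotently, combined with the commutativity of $\hellgeq$, which together let one pass from a generalized eigenspace to a genuine eigenspace via an Engel-type argument on a finite-dimensional $\h_\ell$-stable slice. The corresponding nilpotence on a Verma module in (i) is, in turn, a direct consequence of the truncation $\theta^{\ell+1}=0$.
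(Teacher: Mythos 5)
Your proof is correct and follows essentially the same strategy as the paper: produce a finite-dimensional $\widetilde{\mf b}_\ell$-generating subspace, extract a one-dimensional $\widetilde{\mf b}_\ell$-subquotient (the paper cites the super Lie theorem \cite[Lemma 1.37]{CW12}, you rederive it via Engel on $\widetilde{\mf n}_\ell^+$ plus commutativity of $\h_\ell$), and filter; the paper then obtains (ii) as a corollary of (iii), whereas you prove (ii) directly, and you spell out (i), which the paper leaves as ``standard.'' The only stylistic remark: in (ii) you invoke the Jordan-block hypothesis to make $x-\kappa(x)$ nilpotent for $x\in\hellgeq$, but this can be replaced by the cleaner observation that $\h_\ell$ is abelian so a common eigenvector in any finite-dimensional $\h_\ell$-stable slice is automatic (and in (i) the scalar action of $z\in Z(\wtg_\ell)$ already follows from $z$ preserving the one-dimensional $\h$-weight space $\C v_\la$, without appealing to $\widetilde{\mf n}_\ell^+$-invariance).
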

\begin{proof}  The assertions in Part (i) follow by a standard argument, we omit the proof. It remains to prove the assertion in Part (iii).  Let $M\in \widetilde{\mc O}_\ell$. Then there exists a finite-dimensional $\widetilde{\mf b}_\ell$-submodule $V\subseteq M$  such that $M = U(\wtg_\ell)V$. We will proceed with the proof by induction on the dimension of $V$. Then it follows from standard highest weight theory (cf. \cite[Subsection 1.5.3]{CW12}) that $V$ admits a one-dimensional simple $\widetilde{\mf b}_\ell$-submodule $V_\la\subseteq V$, which is spanned by a $\h_\ell$-weight vector of weight $\la \in \h^\ast_\ell$. This induces a non-zero epimorphism from $\widetilde{M}(\la)$ to $U(\wtg_\ell)V_\la$. By induction, $M/U(\wtg_\ell)V_\la$ has a finite filtration subquotients of which are highest weight modules. This proves \mbox{Part (iii)}. We note that Part (ii) is a direct consequence of Part (iii).   This completes the proof.
\end{proof}

\begin{rem}  It is worth pointing out that, in general, the Verma modules for Takiff algebras do not have a finite composition series; see \cite{MS19} for Takiff $\mf{sl}(2)$. Therefore, the 
	categories $\mc O_\ell$ and $\wto_\ell$ fail to be artinian, in general.    
\end{rem}
 
\subsubsection{Characters and composition multiplicities} \label{sect::223}
For $M\in\wto_\ell$, we define the {\em character} of $M$ as 
\begin{align*}
&\ch M:=\sum_{\nu\in \h^\ast} \dim(M^\nu)e^{\nu}.
\end{align*}  
 
The following result is an analogue of  \cite[Lemma 6.1]{CT23} for Takiff superalgebras, which    in turn goes back to the work of Mazorchuk and  S\"oderberg \mbox{\cite[Proposition 8]{MS19}}. 
\begin{lem} \label{lem::chmul} Fix $\kappa\in  (\hellgeq)^\ast$  and  $M\in \wto_\ell^\kappa$.  Then there are unique non-negative integers $\{k_\la(M)|~\la\in \h^\ast_\ell \text{ with }\la^{\scriptscriptstyle (\geq 1)} =\kappa\}$ such that $\ch M =\sum_{\la\in \h^\ast_\ell} k_\la(M)\ch \wtL(\la).$
\end{lem}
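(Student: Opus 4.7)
The plan is to imitate the standard character argument in BGG category, adapted to our non-Artinian setting. I would first record the elementary but crucial triangularity: since $\wtL(\la)$ is a highest weight module with highest weight vector of $\h$-weight $\la^{(0)}$, and $\widetilde{\mf n}_\ell^-$ strictly decreases the $\h$-weight, we have $\ch \wtL(\la) = e^{\la^{(0)}} + \sum_{\nu < \la^{(0)}} d_\nu^\la\, e^\nu$ with leading coefficient $1$. Moreover, inside a fixed Jordan block $\wto_\ell^\kappa$, the map $\la \mapsto \la^{(0)}$ is injective on highest weights of simple objects, since $\la^{\scriptscriptstyle (\geq 1)} = \kappa$ is fixed. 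Combined with the finiteness of $\{\mu : \nu \leq \mu \leq \eta\}$ for $\eta$ in a finite subset of $\h^\ast$ bounding $\text{supp}(M)$ above, this lets me set up the recursion
\begin{equation*}
k_\la(M) := \dim M^{\la^{(0)}} - \sum_{\mu^{(0)} > \la^{(0)}} k_\mu(M)\, \dim \wtL(\mu)^{\la^{(0)}},
\end{equation*}
indexed by $\la \in \h^\ast_\ell$ with $\la^{\scriptscriptstyle (\geq 1)} = \kappa$, which is well-founded and already delivers the uniqueness half of the statement.

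For existence, i.e.\ nonnegativity and integrality of $k_\la(M)$, I would construct, for each weight $\nu \in \h^\ast$, a \emph{local composition series}: a finite filtration $0 = M_0 \subset M_1 \subset \cdots \subset M_p = M$ whose successive subquotients are either isomorphic to $\wtL(\la)$ for some $\la$ with $\la^{(0)} \geq \nu$ and $\la^{\scriptscriptstyle (\geq 1)} = \kappa$ (using Lemma \ref{lem::44}), or carry no $\h$-weights $\geq \nu$. Given such a filtration, additivity of $\ch$ on short exact sequences together with the leading-term relation from the first paragraph identifies $k_\la(M)$ with the number of occurrences of $\wtL(\la)$ among the subquotients; in particular $k_\la(M) \in \N$. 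The filtration itself is produced by induction on the finite quantity $\sum_{\mu \geq \nu} \dim M^\mu$: pick a $\leq$-maximal weight $\mu_0 \geq \nu$ in $\text{supp}(M)$; since $\hellgeq$ commutes with $\h$ and acts on the finite-dimensional space $M^{\mu_0}$ with the single generalized eigenvalue $\kappa$, a simultaneous $\h_\ell$-eigenvector $v$ of some weight $\la$ with $\la^{(0)} = \mu_0$ and $\la^{\scriptscriptstyle (\geq 1)} = \kappa$ can be extracted; by maximality of $\mu_0$ it is annihilated by $\widetilde{\mf n}_\ell^+$. Then $U(\wtg_\ell)v$ is a highest weight module of highest weight $\la$, admits a maximal submodule $W$ with $U(\wtg_\ell)v / W \cong \wtL(\la)$, and incorporating $W \subset U(\wtg_\ell)v$ into a filtration of $M$ strictly decreases the inductive quantity on the complementary subquotient.

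The main obstacle will be the non-Artinianness of $\wto_\ell$ flagged in the preceding remark: Verma modules need not have finite length, so one cannot globally iterate the operation ``take the simple top, pass to the radical, repeat''. This forces the use of local, weight-truncated composition series, which is precisely where the finiteness of $\text{supp}(M) \cap \{\mu \geq \nu\}$ coming from the boundedness of $\text{supp}(M)$ is exploited. A related delicate point worth double-checking is the extraction of a genuine $\h_\ell$-eigenvector in $M^{\mu_0}$: this rests on the fact that the commuting family $\hellgeq$ acts on the finite-dimensional space $M^{\mu_0}$ with only the single generalized eigenvalue $\kappa$ (by definition of $\wto_\ell^\kappa$), so that a common honest eigenvector is furnished by the usual simultaneous upper-triangularization of commuting operators.
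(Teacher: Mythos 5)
Your proof is correct and follows the same strategy the paper implicitly relies on: the paper simply defers to \cite[Lemma 6.1]{CT23} and \cite[Proposition 8]{MS19}, and what you have written out is essentially the standard local composition series argument those sources adapt from the Kac--Moody setting (triangularity of characters plus injectivity of $\la\mapsto\la^{(0)}$ within a Jordan block for uniqueness, and induction on $\sum_{\mu\geq\nu}\dim M^\mu$ with a weight-truncated filtration for existence). The two points you flag as delicate --- the non-Artinian setting forcing local rather than global composition series, and the extraction of a genuine $\h_\ell$-eigenvector from the finite-dimensional space $M^{\mu_0}$ using that $\hellgeq$ acts there with the single generalized eigenvalue $\kappa$ --- are indeed the two places where care is needed, and you handle them correctly.
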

\begin{proof}
The proof follows mutatis mutandis from the proof of  \cite[Lemma 6.1]{CT23} (see also \cite[Proposition 8]{MS19}). 
\end{proof}
Let $M\in \wto_\ell$.  The number $k_\la(M)$ will be called the {\em composition multiplicity} of $\wtL(\la)$ in  $M$ by analogy with that of modules over Takiff algebras; see also \cite{MS19,CT23}. We use the conventional notation to denote the numbers, namely, we define 
 $$[M:\wtL(\la)]:=k_\la(M),~ \text{for any}~ M\in \wto_\ell\text{ and }\la\in \h^\ast_\ell.$$ We  give a representation-theoretical interpretation of composition multiplicity as follows, which is a  super analogue of \cite[Lemma 6.2]{CT23}. 
	\begin{prop} \label{prop::12}
		Let $M\in \wto_\ell$. Then there exists a descending filtration 
		\begin{align}
			M=M_0\supseteq  M_1\supseteq M_2\supseteq \cdots,  \label{lem7eq::16}
		\end{align} such that 
	\begin{itemize}
		\item[(a)] $M_i/M_{i+1}$ are simple highest weight modules, for all $i\geq 0$;  
		\item[(b)] $\bigcap_{j\geq 0} M_j =0.$
	\end{itemize}  
Furthermore,   $\wtL(\la)$ appears as a quotient $M_i/M_{i+1}$ precisely $[M:\wtL(\la)]$ times,  for  each $\la\in \h_\ell^\ast$.
	\end{prop}
	\begin{proof}
    First, suppose that $M= \Ind(V),$ for some $V\in \mc O_\ell$. By  \cite[Lemma 6.2]{CT23}, there exists  a descending filtration \begin{align} &V=V_0\supseteq  V_1\supseteq V_2\supseteq \cdots,  \label{eq::188}      \end{align} such that $V_i/V_{i+1}$ are simple, for all $i\geq 0$, and $\bigcap_{j\geq 0} V_j =0.$  	 Applying   the induction functor $\Ind(\_)$ to \eqref{eq::188}, then the conclusion follows by  Lemma \ref{lem11}. Next, assume that $M\in \wto_\ell$ is arbitrary and put  $V:=\Res M \in \mc O_\ell$.  We have already shown that there exists a filtration $\Ind (V) = K_0 \supseteq K_1 \supseteq K_2 \supseteq \cdots$  satisfying conditions (a) and (b). There is a canonical epimorphism $\pi: \Ind(V) \rightarrow M$  by adjunction, and as a result, we obtain a filtration  	\begin{align*} 	 &M=\pi(K_0)\supseteq \pi(K_1)\supseteq   \pi(K_2)\supseteq \cdots,   \end{align*} can be refined to be a filtration $M=M_0\supseteq  M_1\supseteq M_2\supseteq \cdots$ satisfying conditions (a) and (b). Furthermore, by Lemmas \ref{lem::1}, \ref{lem::chmul} and an argument similar to one used in \cite[Lemma 6.2]{CT23}, it follows that each $\wtL(\la)$ appears as a quotient $M_i/M_{i+1}$ precisely $[M: \wtL(\la)]$ times.  This completes the proof. 
\end{proof}

  By  {\em component} of $M$, we mean a simple module $\wtL(\la)$ with positive composition multiplicity in $M$, that is,  $[M:\wtL(\la)]>0$.  We let  $\approx$ be the minimal equivalence relation on $\h^\ast_\ell$ generated by relation $\la \approx\mu$ if we have two components $\wtL(\la), \wtL(\mu)$ in an indecomposable module of $\wto_\ell$.  In addition, let $\Lambda\in \h^\ast_\ell$ be an equivalence class, then  an indecomposable module $M\in \wto_\ell$ is said to be of {\em type $\Lambda$} if all components $\wtL(\la)$ of $M$ satisfy $\la \in \Lambda$. These terminologies are given here  inspired by composition multiplicity of modules over affine Lie algebras (e.g., see  \cite[Definition 2.1.11]{K02}).

On the other hand, we define a relation $\rightarrow$ on $\h^\ast_ \ell$ by declaring that 
\begin{align}
&\la\rightarrow \mu\text{ for }\la,\mu\in \h^\ast_\ell, \text{ if }[\widetilde{M}(\la):\wtL(\mu)] \neq 0. \label{eq::17}
\end{align} We note that $ \la\rightarrow\mu$ implies that $\la^{\scriptscriptstyle (\geq 1)}=\mu^{\scriptscriptstyle (\geq 1)}$. Let $\sim$ be the equivalence relation on $\h^\ast_\ell$ generated by $\rightarrow$.  The following proposition is a standard result in representation theory.

\begin{prop} \label{lem::blockdec} Let $\wtg$ be any basic classical Lie superalgebra. Then, for any weights $\la, \mu\in \h^\ast_\ell$, we have $$\la \approx \mu\Leftrightarrow \la \sim \mu.$$ In particular, if $M\in \wto_\ell$ then there are equivalence classes $\Lambda_1,\ldots,\Lambda_k \in \h^\ast_\ell/\sim$ such that 	 $M=\bigoplus_{i=1}^k M_{\Lambda_i}$, for some indecomposable submodules $M_{\Lambda_1},\ldots, M_{\Lambda_k}$ of types $\Lambda_1,\ldots,\Lambda_k$, respectively.    
\end{prop}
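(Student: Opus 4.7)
The plan is to establish the two implications $\la\sim\mu\Rightarrow\la\approx\mu$ and $\la\approx\mu\Rightarrow\la\sim\mu$ separately, with the direct-sum decomposition of a general $M\in\wto_\ell$ following from the second.

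For $\sim\subseteq\approx$, it suffices to check the generating relation $\la\to\mu$. By Lemma \ref{lem::1}(i), $\wtV(\la)$ has simple top $\wtL(\la)$ and contains $\wtL(\mu)$ as a component, so it is enough to show that $\wtV(\la)$ is indecomposable. The key observation is that the $\la^{(0)}$-weight space of $\wtV(\la)$ is one-dimensional: by PBW one has $\wtV(\la)\cong U(\widetilde{\mf n}^-_\ell)\otimes\C v_\la$, and since every $\h$-weight of $\widetilde{\mf n}^-_\ell$ is a negative root of $\wtg$, the zero $\h$-weight part of $U(\widetilde{\mf n}^-_\ell)$ reduces to $\C$. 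Any decomposition $\wtV(\la)=N_1\oplus N_2$ respects $\h$-weights, so one summand absorbs $v_\la$ and thus coincides with $U(\wtg_\ell)v_\la=\wtV(\la)$, forcing the other summand to vanish.

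For the converse inclusion, together with the decomposition of $M$, I would construct a block decomposition
$$\wto_\ell = \bigoplus_{\Lambda \in \h^\ast_\ell/\sim} \wto_\ell[\Lambda],$$
where $\wto_\ell[\Lambda]$ is the full subcategory of modules whose components all have highest weights in $\Lambda$. Once this is available, any indecomposable module is concentrated in a single $\wto_\ell[\Lambda]$, so all of its components are $\sim$-equivalent; this simultaneously yields $\approx\subseteq\sim$ and the asserted decomposition $M=\bigoplus_i M_{\Lambda_i}$. To build it, I would first apply Lemma \ref{lem::44} to reduce to a single Jordan block $\wto_\ell^\kappa$, then split $\wto_\ell^\kappa$ using generalized central characters for $Z(\wtg_\ell)$ (which act by characters on Verma modules via Lemma \ref{lem::1}(i), and hence locally finitely on every object of $\wto_\ell$ by Lemma \ref{lem::1}(iii)), and finally refine these central-character components using the explicit atypical linkage analysis of Appendix \ref{App::blocks}.

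The main obstacle is the final refinement step: $Z(\wtg_\ell)$ does not separate atypical $\sim$-classes, so the explicit linkage data of Appendix \ref{App::blocks} is indispensable. This is precisely where the restriction $\wtg=\gl(m|n)$ or $\mf{osp}(2|2n)$ enters, since the much smaller center of the Takiff periplectic case would yield a considerably coarser central-character decomposition that cannot identify all blocks on its own.
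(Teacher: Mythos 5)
Your first direction ($\sim\subseteq\approx$) is fine: it reduces to $\to\subseteq\approx$, and your argument that $\wtV(\la)$ is indecomposable (the $\h$-weight space $\wtV(\la)^{\la^{(0)}}$ is one-dimensional because every nonzero $\h$-weight of $\widetilde{\mf n}^-_\ell$ is a negative root of $\wtg$, so any direct summand containing $v_\la$ is all of $\wtV(\la)$) is correct and matches the standard observation.

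The converse, however, is not actually proved. You correctly identify that the decisive ingredient is a block decomposition of $\wto_\ell$ refining the Jordan blocks and central-character components, and you correctly observe that $Z(\wtg_\ell)$ alone cannot separate atypical $\sim$-classes. But your proposed "final refinement step" is to invoke "the explicit atypical linkage analysis of Appendix \ref{App::blocks}" — and Appendix \ref{App::blocks} \emph{is} the proof of Proposition \ref{lem::blockdec}. The substance of the result is precisely the $\Ext$-vanishing machinery developed there: one shows $\Ext^1_{\wtg_\ell}(\wtV(\la),\wtV(\mu)^\sigma)=0$ for $\la\neq\mu$ by reducing a $\Tor_1^{\wtg_\ell}$ computation to $\Tor_1^{\h_\ell}(\C_\mu,\C_{-\la})$ via Shapiro-type lemmas and the duality $\Ext^k_{\wtg_\ell}(M,N)\cong\Tor^{\wtg_\ell}_k(M^{\mathrm{st}},N^\sigma)^\ast$, then propagates this to $\Ext^1_{\wtg_\ell}(M,N)=0$ for $M,N$ of different $\sim$-types using the filtration of Proposition \ref{prop::12} and the commutation of $\Tor$ with direct limits, following the Kac--Moody pattern of \cite[Corollary 3.3.10]{K02}. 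Deferring to the Appendix at this point is circular: you have assumed the conclusion.

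A secondary remark: the paper never passes through a central-character decomposition for $Z(\wtg_\ell)$. Once the $\Ext$-vanishing between modules of different types is established, the block decomposition (and hence both $\approx\subseteq\sim$ and the direct-sum statement for $M$) follows directly from the Kumar-style argument, with no need for the Jordan-block or central-character intermediate stages you propose. Those steps are not wrong, but they add nothing and do not make the missing $\Ext$-vanishing argument any easier to supply. Your final paragraph correctly identifies where the restriction to $\gl(m|n)$ and $\mf{osp}(2|2n)$ is used, but that does not close the gap: the actual cohomological computation that separates the $\sim$-classes is absent from the proposal.
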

\begin{proof}
	See   Appendix   \ref{App::blocks}. 
\end{proof}

\subsection{Odd reflections} \label{sect24::OddRef}
As is well known,  the Borel subalgebras of a basic classical or a strange Lie superalgebra are not conjugate in general under the action of the Weyl group. In \cite{LSS86}, Leites, Saveliev, and Serganova develop the method of {\em odd reflecitons} to relate non-conjugate Borel subalgebras and their positive systems; see also \cite[Lemma 1 and Section 2.2]{PS89} and \cite[Subsection 1.4]{CW12}.  The goal of this subsection is to develop  analogues of the odd reflections for the Takiff superalgebra associated to a basic classical or a strange Lie superalgebra. Then we describe the highest weights of simple highest weight modules with respect to various Borel subalgebras  connected by the odd reflections.

\subsubsection{Basic classical case} \label{sect::oddreflbasic} In this subsection, we  assume that $\wtg$ is a basic classical Lie superalgebra. 
Let $ \widetilde{\mf b}$ be an arbitrary Borel subalgebra of $\wtg$ with a positive system $\Phi^+ \subseteq \mf h^\ast$. Suppose that $\alpha\in \Phi^+$ is an isotropic odd simple root.  Denote by $\widetilde{\mf b}(\alpha)$ the new Borel subalgebra spanned by root vectors associated with roots in the new positive system  $\Phi^+(\alpha):=\Phi^+\backslash\{\alpha\}\cup \{-\alpha\}$.  
Following \cite{PS89}, we say that the Borel subalgebras $\widetilde{\mf b}$ and $\widetilde{\mf b}(\alpha)$ are  {\em connected by an odd reflection associated with $\alpha$}. Then, any two Borel subalgebras with the same even parts are connected by a chain of odd reflections. 

We choose non-zero root vectors  $e_\alpha\in\wtg^{\alpha}$, $f_\alpha\in \wtg^{-\alpha}$ and define $h_\alpha :=[e_\alpha, f_\alpha]$ such that $\nu(h_\alpha) = \langle \nu, \alpha \rangle$ for $\nu \in \h^\ast$. Recall that we set  $x^{(i)}:= x\otimes \theta^i\in \wtg_\ell$, for  $x\in \wtg$ and $i\geq 0$.    

\begin{prop} \label{thm::Oddrefl}  
Suppose that $\wtg$ is basic classical. Let $L$ be a simple $\wtg_\ell$-module in $\wto_\ell$ with $\widetilde{\mf b}_\ell$-highest weight vector $v_\la\in L$ of $\widetilde{\mf b}_\ell$-highest weight $\la =(\la^{(0)},\la^{(1)},\ldots,\la^{(\ell)})\in \h^\ast_\ell$. Let $\alpha$ be an isotropic odd simple root. Then  
\begin{itemize}
    \item[(i)] Suppose that   $\langle \la^{(k)},\alpha\rangle\neq 0$ and $$\langle \la^{(k+1)},\alpha\rangle= \cdots =\langle \la^{(\ell)},\alpha\rangle =0,$$ for some $0\leq k\leq \ell$. Then   $f_\alpha^{(0)}f_\alpha^{(1)}\cdots f_\alpha^{(k)}v_\la$ is a $\widetilde{\mf b}(\alpha)_\ell$-highest weight vector of $L$. 
    \item[(ii)]  Suppose that   $\langle \la^{(k)},\alpha\rangle= 0$, for all $0\leq k\leq \ell$. Then $v_\la$ is a $\widetilde{\mf b}(\alpha)_\ell$-highest weight vector of $L$. 
\end{itemize}
\end{prop}
\begin{proof} 
First, suppose that the condition in Part (i) holds. Let $v_\la' :=f_\alpha^{(0)}f_\alpha^{(1)}\cdots f_\alpha^{(k)} v_\la$.  To prove that  $ v'_\la\neq 0,$ we calculate \begin{align*}
&e_\alpha^{(k)}v_\la' =  \sum_{i=0}^k c_i f_{\alpha}^{(0)}\cdots f_{\alpha}^{(i-1)} h_\alpha^{(k+i)} f_{\alpha}^{(i+1)}\cdots f_{\alpha}^{(k)} v_\la ,
\end{align*} for some $c_i =\pm 1$. Since $h_\alpha^{(k+i)}v_\la  = \langle \la^{(k+i)}, \alpha\rangle v_\la=0,$ for any $i>0$, it follows that $e_\alpha^{(k)}v_\la' =   h_\alpha^{(k)} f_\alpha^{(1)}\cdots f_\alpha^{(k)} v_\la = \langle \la^{(k)},\alpha \rangle  f_\alpha^{(1)}\cdots f_\alpha^{(k)} v_\la\neq 0$  and thus $v_\la'\neq 0$.

To prove that $v_\la'$ is a $\widetilde{\mf b}(\alpha)_\ell$-highest weight vector of $L$, we first show that $f_\alpha^{(i)}v_\la' =0,$ for all $0\leq i\leq \ell$. This is obvious for $0\leq i\leq k$, since $\alpha$ is isotropic. 
For $i>k$, we shall prove that $ f_\alpha^{(i)}v_\la=0$, which implies that $ f_\alpha^{(i)}v'_\la=0$. Suppose on the contrary that $f_\alpha^{(i)}v_\la$ is non-zero. We may observe that, for any $\beta \in \Phi^+(\alpha)\cap \Phi^+$ and $0\leq j\leq \ell$, it follows that  $e_\beta^{(j)}f_\alpha^{(i)}v_\la=0$ since  $\beta -\alpha$ is either not a root or a root in $\Phi^+(\alpha)\cap \Phi^+$. Also, we may note that $e_\alpha^{(j)}f_\alpha^{(i)}v_\la= h_\alpha^{(i+j)}v_\la = \langle \la^{(i+j)}, \alpha \rangle v_\la =0,$ since $i+j>k$. Consequently,     $f^{(i)}_\alpha v_\la$ is a non-zero $\widetilde{\mf b}_\ell$-highest weight vector of $L$, a contradiction. 

Next, we may note that, for any $\beta \in \Phi^+(\alpha)\cap \Phi^+$ and $0\leq i\leq \ell$, since $\beta-r\alpha$ is either not a root or a root in $\Phi^+(\alpha)\cap \Phi^+$ for any   $r>0$, it follows that $e^{(i)}_\beta v_\la'=0$. Furthermore, by a straightforward calculation we may infer that $v'_\la$ is a $\h_\ell$-weight vector.   This proves the assertion in Part (i). The conclusion in Part (ii) can be proved by a similar argument. This completes the proof.
\end{proof}

\subsubsection{Periplectic case} \label{sect::oddrefl}  The goal of this subsection is to describe the odd reflections for the Takiff superalgebras associated to periplectic Lie superalgebras.  In this subsection, we assume that  $\wtg= \pn$. Suppose that $\mf B$ and $\mf B'$ are two Borel subalgebras of $\pn$  with the same even parts. Following \cite{PS89}, they are said to be {\em connected by an  inclusion} if  $\mf B $ is a co-dimensional one subalgebra of $  \mf B'$. Otherwise, we say that they are {\em connected by an odd reflection associated with an odd root $\gamma$} provided that the following are satisfied:  
\begin{enumerate}
    \item[$\bullet$ ]$\mf B\cap \mf B'$ is a co-dimensional one subalgebra of both $\mf B$ and ${\mf B}'$;
     \item[$\bullet$ ]   $c\gamma$ is a root of ${\mf B}$ and $-c\gamma$  is a root of $\mf B'$, for some $c=\pm 1.$
\end{enumerate} 

Now, we let $\widetilde{\mf b}$ be the distinguished Borel subalgebra from Subsection \ref{sect::223pn}. By \cite[Lemma 1 and Section 2.2]{PS89}, there is a  chain of odd reflections and inclusions $$\mf B^0:=\widetilde{\mf b}^{r}, \mf B^1,
\mf B^2, \cdots, , \mf B^{q-1}, \mf B^q:= \widetilde{\mf b},$$
connecting the Borel subalgebras $\widetilde{\mf b}^r$ and $\widetilde{\mf b}$ by the sequences of roots \[ \{\gamma_0,
\gamma_1, \cdots , \gamma_{q-1} \} := \{ 2\epsilon_1,\text{ }
\epsilon_1 +\epsilon_2 ,\text{ } 2\epsilon_2 ,\text{ } \epsilon_1
+\epsilon_3,\text{ } \epsilon_2 +\epsilon_3, \text{ }
2\epsilon_3,\text{ } \cdots, \text{ } \epsilon_{n-1}
+\epsilon_n,\text{ } 2\epsilon_n \}
\] such that $\gamma^p$ is a root of ${\mf B}^{p+1}$ and it is not a root of ${\mf B}^p$,  for each $p$.  

For any $p$, we may note that ${\mf B}^p$ and ${\mf B}^{p+1}$ is connected by an odd reflection if and only if  $\gamma_p =\vare_i+\vare_j$ for some $i<j$. In this case, we set $\ov{\gamma^p}: =\vare_i-\vare_j$.

\begin{prop} \label{thm::Oddreflforpn} Suppose that $\wtg=\pn$ and keep notations as above. Let $L$ be a simple $\wtg_\ell$-module in  $\wto_\ell$ with $\mf B^p_\ell$-highest weight vector $v_\la\in L$ of $\mf B^p_\ell$-highest weight $\la =(\la^{(0)},\la^{(1)},\ldots,\la^{(\ell)})\in \h^\ast_\ell$, for some $0\leq p< q$. Then we have 
\begin{itemize}
    \item[(i)] Suppose that $\gamma^p =\vare_i+\vare_j$, for some $i<j$. If   $\langle \la^{(k)},\ov{\gamma^p}\rangle\neq 0$ and $$\langle \la^{(k+1)},\ov{\gamma^p}\rangle= \cdots =\langle \la^{(\ell)},\ov{\gamma^p}\rangle =0,$$ for some $0\leq k\leq \ell$. Then   $f_\alpha^{(0)}f_\alpha^{(1)}\cdots f_\alpha^{(k)}v_\la$ is a $\mf B^{p+1}_\ell$-highest weight vector of $L$. Otherwise, if $\langle \la^{(k)},\ov{\gamma^p}\rangle= 0$, for all $0\leq k\leq \ell$. Then $v_\la$ is a ${\mf B}_\ell^{p+1}$-highest weight vector of $L$.
    \item[(ii)]  Suppose that ${\mf B}^p$ and ${\mf B}^{p+1}$ are connected by an inclusion. Then $v_\la$ is a ${\mf B}_\ell^{p+1}$-highest weight vector of $L$.
\end{itemize}
\end{prop}
\begin{proof}
The arguments in the proof of Proposition \ref{thm::Oddrefl} can be adapted to prove the assertion in Part (i), and so we omit it. The conclusion in Part (ii) follows by considering  weights of $L$. This completes the proof. 
\end{proof}

\subsection{Typical  representations} \label{sect::24typical}
In this subsection, we give a construction of the central element $\widetilde{\Omega}$ from Theorem \ref{thm::1st} and define   typical representations of Takiff superalgebras associated with  basic classical Lie superalgebras. Let $\mf s$ be a finite-dimensional complex Lie superalgebra. Denote by $S(\mf s)$ and $S(\mf s)^{\mf s}$  the symmetric algebra  of $\mf s$ and its subalgebra of $\mf s$-invariants under the adjoint action, respectively. Then $S(\mf s)$ and $U(\mf s)$ are isomorphic as $\mf s$-modules under the adjoint action via the the canonical super-symmetrization map $\omega_{\mf s}: S(\mf s)\rightarrow U(\mf s)$; see, .e.g, \cite[Subsection 0.5]{Ser99} or \cite[Proposition 2.19]{CW12}. It is clear that $\omega_{\mf s}$ restricts to an isomorphism  from $S(\mf s)^{\mf s}$ to  $Z(\mf s)$. 

 \subsubsection{}   Suppose that $\wtg$ is a basic classical Lie superalgebra  from \eqref{eq::Kaclist} with a Borel subalgebra $\widetilde{\mf b}$ and a Cartan subalgebra $\mf h$ from a triangular decomposition in \eqref{eq::tria}.  Fix  an even  non-degenerate invariant super-symmetric bilinear $\langle\_,\_\rangle$ on $\wtg$. Then, it induces a non-degenerate bilinear form on $\mf h^\ast$, and furthermore, it allows us to identify $S(\wtg)^{\wtg}\cong S(\wtg^\ast)^{\wtg}$ and $\theta: S(\mf h)\cong S(\mf h^\ast)$. Transferred via these identifications, the restriction map from $S(\wtg^\ast)$ to $S(\mf h^\ast)$ induces a homomorphism of algebras $\eta: S(\wtg)\rightarrow S(\mf h^\ast)$. 
 
 Let $\mf{hc}^\ast = \theta \circ \mf{hc}:  Z(\wtg)\rightarrow U(\h^\ast)$ be the composition of the isomorphism $\theta$ above and the associated Harish-Chandra homomorphism $\mf{hc}: Z(\wtg)\rightarrow U(\mf h)$ (see, e.g., \cite[Subsection 2.2.3]{CW12}).  By the standard theory of the invariants polynomials on complex finite-dimensional simple Lie superalgebras,  it follows that  $\eta(S(\wtg)^{\wtg}) = \mf{hc}^\ast(Z(\wtg))$ and its contains the following element  $$P =\prod_{\alpha\in \ov{\Phi}^+_\ob} \alpha \in S(\mf h^\ast),$$ where $\ov{\Phi}^+_\ob$ denotes the  set of all positive odd isotropic roots;  see   \cite[Main Theorem]{Ser99}, \cite[Section 4]{Go04}, \cite{Ka84} and \cite[Subsection 2.2]{CW12}. Also, this implies that the central element $(\mf{hc}^\ast)^{-1}(P)$ acts on any highest weight $\wtg$-module with highest weight $\nu\in \mf h^\ast$ as the scalar $$\prod_{\alpha\in \ov{\Phi}^+_\ob} \langle \nu+\widetilde\rho, \alpha\rangle,$$ where  $ \widetilde\rho\in \mf h^\ast$  denotes the associated Weyl vector of $\wtg$, that is, $\widetilde\rho= \frac{1}{2}({\sum_{\alpha\in \Phi_\oa^+}\alpha}-\sum_{\beta\in \Phi_\ob^+}\beta).$ 

Next, we form the Takiff superalgebra $\wtg_\ell$ associated with $\wtg$. Then the linear map $\_\otimes \theta^\ell: \wtg\rightarrow \wtg_\ell$, sending $x$ to $  x\otimes \theta^\ell$,  induces an inclusion $\iota$ from $S(\wtg)^{\wtg}$ to $S(\wtg_\ell)^{\wtg_\ell}$. This leads to an inclusion of algebras ${\omega}_{\wtg_\ell}\circ \iota: S(\wtg)^{\wtg} \hookrightarrow Z(\wtg_\ell)$, where $\omega_{\wtg_\ell}$ denotes the canonical symmetrization map of $\wtg_\ell$ as above. Now, we can define the element $$\widetilde{\Omega}: = \omega_{\wtg_\ell} \circ \iota (\eta^{-1}(P)) \in Z(\wtg_\ell).$$ 

    Recall from Lemma \ref{lem::1} that $\widetilde{\chi}_\la$ denotes the central character of $\wtL(\la)$, for $\la \in \mf h^\ast_\ell$. By construction, we have  $$\widetilde{\chi}_\la(\widetilde{\Omega}) =\prod_{\alpha\in \ov{\Phi}^+_\ob} \langle \la^{(\ell)}, \alpha\rangle. $$ Here we recall  $\la^{(\ell)}$  denotes the restriction of $\la$ to $\mf h\otimes \theta^\ell\cong \mf h^\ast$ as before. A central character $\widetilde{\chi}: Z(\wtg_\ell)\rightarrow \C$ is called {\em typical} provided that $\widetilde{\chi}(\widetilde{\Omega})\neq 0$. A  $\wtg_\ell$-module $M$ is referred to as a {\em typical representation} if $M$ admits a typical central character,  namely, there is a typical central character $\widetilde{\chi}$ such that $z$ acts on $M$ as the scalar $ \widetilde{\chi}(z)$, for any $z\in Z(\wtg_\ell)$.

\subsubsection{} \label{sect::252} 
 In the paper, we define the Weyl group $W$ of the Takiff superalgebra $\wtg_\ell$ to be the Weyl group of $\g$, for a basic or a periplectic Lie superalgebra $\wtg$.  
 In this subsection, we review from \cite{G95} the results regarding the Harish-Chandra homomorphism of the Takiff algebras  associated with reductive Lie algebras.  Following \cite[Subsection 2.4]{G95}, we extend the natural action of $W$ on $\h^\ast$ to $\h^\ast_\ell$ diagonally, that is,  $$w\la = (w\la^{(0)},w\la^{(1)},\ldots,w\la^{(\ell)}),\text{ for $w\in W$ and }\la\in \h^\ast_\ell.$$   Let   $\rho\in \h^\ast$ denotes the Weyl vector of $\g$ (i.e., the half sum of positive roots in $\g$) and $\rho_\ell\in \h_\ell^\ast$ is an associated weight determined by $(h\in \h)$: 
 \begin{align}
 	&	\rho_\ell(h^{(i)})=  	  
    \begin{cases} (\ell+1) \rho(h), &\mbox{ for~$i=0$;}\\  	    	0,& \mbox{ otherwise}. \end{cases}
 \end{align}  We define the following  {\em dot-action} of $W$ on $\h^\ast_\ell$ by declaring that 
 \begin{align} 
 &w\cdot \la = w(\la+\rho_\ell) -\rho_\ell, \text{ for $w\in W$ and $\la \in \h^\ast_\ell$.} \label{eq::dotaction}
 \end{align}

Define   $\psi_{\h_\ell}:Z(\g_\ell)\rightarrow  {U}(\mathfrak{h}_\ell)$  to be the projection associated with the decomposition $U(\g_\ell) = U(\mf h_\ell)\oplus\left(\mf n_\ell^{-} U(\g_\ell)+U(\g_\ell)\mf n_\ell\right).$ We regard $U(\h_\ell)$ as the algebra of polynomial functions on $\h^\ast_\ell$ and define an automorphism $\tau$ of $U(\h_\ell)$ by $\tau(f)(\la):=f(\la-\rho_\ell),$ for all $f\in U(\h_\ell)$ and $\la \in \h^\ast_\ell$.   Following \cite{G95}, we define the   Harish-Chandra homomorphism $\mf{hc}_\ell$ for $\g_\ell$ to be the composition 
$$\mf{hc}_\ell:=\tau \psi_{\mf h_\ell}: Z(\g_\ell)\rightarrow  U(\h_\ell).$$ The following lemma   taken from \cite[Theorem~4.7]{G95} will be useful in sequel.
\begin{lem}[Geoffriau] \label{lem11::G} The   Harish-Chandra homomorphism 
$\mf{hc}_\ell$ is an embedding from $Z(\g_\ell)$ to the algebra  $U(\h_\ell)^W$ consisting of $W$-invariants in $U(\h_\ell)$. 
\end{lem}  

\begin{rem}\label{rem13::typcon}
    Suppose that $\wtg$ is basic classical. We can also extend the Weyl vector $\widetilde{\rho}$ of $\wtg$ to an element in $\h_\ell^\ast$ in a similar fashion. Then the typical condition of a weight $\la\in \h^\ast_\ell$ of the  Takiff superalgebra $\wtg_\ell$ with $\ell \geq 0$ can be uniformly given as follows 
\begin{align*}
&\la \text{ is typical }\Leftrightarrow \prod_{\alpha\in \ov{\Phi}^+_\ob}\langle(\la+\widetilde{\rho}_\ell)^{(\ell)},\alpha\rangle\neq 0. 
\end{align*}  
\end{rem}

\subsection{Kac induction functor} \label{sect::24}
  In this subsection, unless stated otherwise, we let $\wtg =\gl(m|n), \mf{osp}(2|2n),$ or $\pn$ with the type-I grading $\wtg = \wtg^{-1}\oplus \wtg^0\oplus \wtg^1$ the triangular decompositions from  Subsection \ref{sect::2.1}.   Set  $\wtg_\ell^{\scriptscriptstyle \geq 0}:=\wtg_\ell^0+\wtg_\ell^1$. For any $V\in \g_\ell\mod$,  we may extend the $V$  to a $\wtg_\ell^{\scriptscriptstyle \geq 0}$-module by letting $\wtg_\ell^1V=0$. Define the Kac induced module  $$K(V) : =U(\wtg_\ell){\otimes}_{U(\wtg_\ell^{\scriptscriptstyle \geq 0})}V.$$ This gives rise to the exact {\em Kac induction functor} $K(\_):\g_\ell\mod\rightarrow\wtg_\ell\mod$.
 
 \subsubsection{$\Z$-gradings of  Kac modules}
 	Define the following grading operators:
 \begin{align}
 	&d^{\widetilde{\g}}:=\left\{ \begin{array}{lll} \sum\limits_{i=m+1}^{m+n}E_{ii},\quad \text{~for $\widetilde{\g}= \gl(m|n)$;} \\ 
 		H_\vare = E_{11}-E_{22},\quad \text{~~~for $\widetilde{\g}= \mf{osp}(2|2n)$;}\\
 		\sum\limits_{i=1}^n e_{ii}= \sum\limits_{i=1}^n(E_{ii}-E_{n+i,n+i}), \quad \text{~~for $\widetilde{\g} =\pn$.} \end{array} \right. \label{eq::graop}
 \end{align}

 We view the subalgebras  $U(\wtg_\ell^{\pm 1 })$ of $U(\wtg_\ell)$ as 
 exterior algebras $\Lambda(\wtg_\ell^{\pm1})$ of $\wtg_\ell^{\pm 1}$. For $k\in \Z_{\geq 0}$, we set  $\Lambda^k(\wtg_\ell^{\pm 1})$ to be the subspace spanned by all $k$-th exterior powers in  $\Lambda(\wtg_\ell^{\pm})$. Suppose that  $V$ is a simple $\g_\ell$-module, then $V$ admits a central character of $\g_\ell$ by Dixmier’s theorem \cite[Proposition 2.6.8]{Di96}. Thus, $K(V)$ admits a central character of $\wtg_\ell$. Suppose that the central character of $V$ is of the form $\chi_\la$ for some $\la \in \h^\ast_\ell$, then $K(V)$ admits a typical central character if and only if $\la$ is typical.
 
 We note that $d^{\wtg}\in Z(\g_\ell)$, which means that  $d^{\wtg}$ acts on $V$ as a scalar $d^{\wtg}_V\in \C$. In this case,  the restriction $\Res K(V)$ decomposes as a finite direct sum of generalized  eigenspaces $\Lambda^k(\wtg_\ell^{-1})\otimes V$ of $d^{\wtg}$   with eigenvalues $d^{\wtg}_V-k_{\wtg}$, where   
 \begin{align*}
 	&	k_{\widetilde{\g}}:=  	    \begin{cases} k, &\mbox{ for~$\widetilde \g =\gl(m|n)$ or $\mf{osp}(2|2n)$;}\\  	    	2k,&\mbox{ for $\widetilde \g = \pn$}. \end{cases}
 \end{align*} Also, we identify $V$ as the $\g_\ell$-submodule $\C1_{\Lambda(\wtg_\ell^{-1})}\otimes V$ of $\Res K(V)$.
 
 For any $M\in \wtg_\ell\mod$, we define the $\g_\ell$-submodule  
 $M^{\wtg_\ell^1} = \{m\in M|~\wtg_\ell^1\cdot m=0\}.$    We note that the   functor $(\_)^{\wtg_\ell^1}:\wtg_\ell\mod \rightarrow\g_\ell\mod$ of taking $\wtg_\ell^1$-invariants of $\wtg_\ell$-modules is the  right adjoint of the Kac functor $K(\_)$.  Furthermore, for any $\kappa\in (\hellgeq)^\ast$  they restricts well-defined functors   \begin{align*}
		&K(\_):  \mc O_\ell^\kappa \rightarrow \wto_\ell^\kappa~\text{and } (\_)^{\wtg_\ell^1}: \wto_\ell^\kappa\rightarrow\mc O^{\kappa}_\ell  
	\end{align*} such that $K(M(\la))\cong \widetilde{M}(\la)$, for any $\la\in \h^\ast_\ell$.  
	 By Lemma \ref{lem11}, if $V\in \mc O$ has finite length, then so are $K(V)$.

\subsubsection{Simplicity of Kac modules} \label{sect::262Kac} For any type I Takiff superalgebra $\wtg_\ell$, we define $\Lambda^{\text{max}}(\wtg_\ell^{\pm 1}) := \Lambda^{\dim \wtg_\ell^{\pm 1}}( \wtg_\ell^{\pm 1})$ and choose  two non-zero  vectors $X^{\pm} \in \Lambda^{\text{max}}(\wtg_\ell^{\pm 1})$.   Assume that  $\wtg$ is either $\gl(m|n)$ or $\mf{osp}(2|2n)$. By using the fact that  $\Phi^+ =-\Phi^-$, and combining it with arguments  analogous to those in the proof of   \mbox{\cite[Lemma 6.5]{CM21}},  we can prove that 
\begin{align}
	&X^+X^-= \Omega' +\sum_{i} x_i^-r_ix_i^+, \label{Propeq::9}
\end{align}
for some $\Omega'\in Z(\g_\ell)$, $r_i\in U(\g_\ell)$, and $x_i^{\pm}\in \wtg_\ell^{\pm 1}\Lambda(\wtg_\ell^{\pm 1})$.  	
The following is a generalization of \cite[Theorem 6.7]{CM21}.  
\begin{thm} \label{prop::Kacsimple}
 Suppose that $\wtg=\gl(m|n)$ or $\mf{osp}(2|2n)$. Let $V$ be a simple $\g_\ell$-module and $\chi: Z(\g_\ell)\rightarrow \C$ be the central character of $V$. Then the following are equivalent:
	\begin{itemize}
		 		\item[(i)] $K(V)$ is simple. 
		\item[(ii)]  $\Omega'V\neq 0$.
        \item [(iii)] $\chi(\Omega')\neq 0$.
       \end{itemize}
\end{thm}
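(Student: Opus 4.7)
The plan is to exploit the $\Z$-grading on $K(V)$ induced by the central element $d^{\wtg}\in Z(\g_\ell)$ of \eqref{eq::graop} together with the identity \eqref{Propeq::9}. Since $d^{\wtg}$ acts on the simple $\g_\ell$-module $V$ by a single scalar by Dixmier, and acts on $\Lambda^k(\wtg_\ell^{-1})\otimes V$ by the eigenvalue $d^{\wtg}_V-k$, every $\wtg_\ell$-submodule $N\subseteq K(V)$ is stable under $d^{\wtg}$ and therefore decomposes as $N=\bigoplus_{k=0}^{\dim\wtg_\ell^{-1}}N_k$ with $N_k=N\cap(\Lambda^k(\wtg_\ell^{-1})\otimes V)$. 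The key computation fueling both implications is that, upon identifying $V$ with $1\otimes V\subseteq K(V)$, each $x_i^+\in\wtg_\ell^1\Lambda(\wtg_\ell^1)$ lies in $U(\wtg_\ell^{\scriptscriptstyle\geq 0})$ and annihilates $1\otimes v$ by the Kac construction, so \eqref{Propeq::9} reduces to
\begin{align*}
	X^+\cdot(X^-\otimes v)=X^+X^-\cdot v=\chi(\Omega)\,v,\qquad v\in V.
\end{align*}

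For $(ii)\Rightarrow(i)$, I would take an arbitrary nonzero submodule $N\subseteq K(V)$ and let $k^*$ be maximal with $N_{k^*}\neq 0$. Since $\wtg_\ell^{-1}$ shifts the grading index $k$ upward, $\wtg_\ell^{-1}\cdot N_{k^*}\subseteq N_{k^*+1}=0$, so $N_{k^*}\subseteq K(V)^{\wtg_\ell^{-1}}$. But $\wtg_\ell^{-1}$ is an abelian odd subalgebra acting on $K(V)=\Lambda(\wtg_\ell^{-1})\otimes V$ by left exterior multiplication in the first tensor factor; the classical fact that the annihilator in an exterior algebra of all generators is the top exterior power gives $K(V)^{\wtg_\ell^{-1}}=\Lambda^{\text{max}}(\wtg_\ell^{-1})\otimes V$, so $k^*=\dim\wtg_\ell^{-1}$. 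Writing $g\cdot(X^-\otimes v)=X^-\otimes gv+[g,X^-]\otimes v$ and using that $[g,X^-]$ lies in the one-dimensional space $\Lambda^{\text{max}}(\wtg_\ell^{-1})=\C X^-$ identifies $\Lambda^{\text{max}}(\wtg_\ell^{-1})\otimes V$ with $V$ twisted by the one-dimensional $\g_\ell$-character coming from the adjoint action on $\Lambda^{\text{max}}(\wtg_\ell^{-1})$; in particular it is a simple $\g_\ell$-module, forcing $N_{k^*}=\Lambda^{\text{max}}(\wtg_\ell^{-1})\otimes V$. Applying $X^+$ to any $X^-\otimes v\in N$ then produces $\chi(\Omega)v\in N\cap V$, and since $\chi(\Omega)\neq 0$ we obtain $V\subseteq N$, whence $N\supseteq U(\wtg_\ell)V=K(V)$.

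For $(i)\Rightarrow(ii)$ I would argue the contrapositive: assuming $\chi(\Omega)=0$, I would produce a proper nonzero $\wtg_\ell$-submodule $N:=U(\wtg_\ell)\cdot(X^-\otimes V)$ of $K(V)$. Using the PBW ordering $U(\wtg_\ell)=U(\wtg_\ell^1)U(\g_\ell)U(\wtg_\ell^{-1})$ and that both $U(\wtg_\ell^{-1})$ and $U(\g_\ell)$ preserve $\Lambda^{\text{max}}(\wtg_\ell^{-1})\otimes V$ (the former because $\wtg_\ell^{-1}\cdot X^-=0$, the latter because $\Lambda^{\text{max}}(\wtg_\ell^{-1})$ is a one-dimensional $\g_\ell$-module), one collapses $N=U(\wtg_\ell^1)\cdot(X^-\otimes V)$. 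Since $\wtg_\ell^1$ is odd abelian, $U(\wtg_\ell^1)=\Lambda(\wtg_\ell^1)$ has top exterior power spanned by $X^+$, and each element of $\wtg_\ell^1$ lowers the index $k$ by one; so the only way a product $y_1\cdots y_j\cdot(X^-\otimes v)$ can land in $V=\Lambda^0(\wtg_\ell^{-1})\otimes V$ is with $j=\dim\wtg_\ell^1$ and $y_1\cdots y_j$ a scalar multiple of $X^+$. But $X^+\cdot(X^-\otimes v)=\chi(\Omega)v=0$ by hypothesis, so $N\cap V=0$ whereas $N\supseteq X^-\otimes V\neq 0$, exhibiting $N$ as a proper nonzero submodule.

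The most delicate steps are the structural identification $K(V)^{\wtg_\ell^{-1}}=\Lambda^{\text{max}}(\wtg_\ell^{-1})\otimes V$ and the bookkeeping in the contrapositive direction that tracks why $U(\wtg_\ell^1)(X^-\otimes V)$ can only reach degree $k=0$ through a scalar multiple of $X^+$. Both reduce to clean exterior-algebra computations using the type-I abelianness of $\wtg_\ell^{\pm 1}$ and the PBW theorem for $U(\wtg_\ell)$; neither should be genuinely subtle once the bigrading on $K(V)$ by $d^{\wtg}$-eigenvalue and exterior degree is set up properly.
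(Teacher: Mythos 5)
Your proof is correct and takes essentially the same route as the paper, which simply defers to the proofs of Theorem 6.7 and Corollary 6.8 in \cite{CM21}: the underlying mechanism there (and in the paper's own fleshed-out version in the proof of Proposition~\ref{lem::4}) is the same $d^{\wtg}$-grading on $K(V)=\Lambda(\wtg_\ell^{-1})\otimes V$ together with the identity $X^+X^-=\Omega+\sum_i x_i^-r_ix_i^+$ from \eqref{Propeq::9}. The only cosmetic difference is that you re-derive the needed piece of \cite[Lemma~3.2]{CM21} (that every nonzero submodule meets $\Lambda^{\max}(\wtg_\ell^{-1})\otimes V$) rather than citing the simplicity of the socle.
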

\begin{proof} 
Mutatis mutandis the proofs of \cite[Theorem 6.7, Corollary 6.8]{CM21}.
\end{proof}

   We would like to point out a stronger form of the following theorem, which is proved by Maria Gorelik in a private communication. 

 \begin{thm}[Gorelik] \label{thm::16::Gorelik}
  Let $\mf s =\mf s^{-1}\oplus \mf s^0\oplus \mf s^{1}$ be an arbitrary finite-dimensional Lie superalgebra of  type I. Let $\psi: U(\mf s)\rightarrow U(\mf s^0)$  be the projection  to the first summand in the  decomposition  $U(\mf s) = U(\mf s^0)\oplus\left(\mf s^{-1} U(\mf s)+U(\mf s)\mf s^1\right).$ Let $m:=\dim \mf s^{-1}$ and $S:= \psi(\Lambda^{m} (\mf s^{1})  \Lambda^m (\mf s^{-1})).$ 
  Then, $$K(V) \text{ is simple }\Leftrightarrow SV \neq 0,$$
  for any simple $\mf s$-module $V$. 
 \end{thm} The Theorem \ref{thm::16::Gorelik} above implies that the simplicity of $K(V)$ depends only on the  annihilator of $V$ and leads to some interesting consequences. First, this applies to the more general setup of {\em map superalgebras} as studied in  \cite{Sa14, CM19, CM23}, that is, they are Lie superalgebras of the form  $\wtg[A] := \wtg\otimes A$ with the Lie bracket defined in the same fashion, for  finite-dimensional type I Lie superalgebras $\wtg$ and associative commutative algebras $A$. We may observe that Theorem \ref{thm::16::Gorelik}    can be applied to provide a more general   framework for general criteria of the simplicity of Kac modules for the map superalgebras $\wtg[A]$.

 Another interesting consequence is to give a complete description of the simplicity of Kac modules over Lie superalgebras of type I from Kac's list.  Such a characterization is known for the  Lie superalgebras $\mf{gl}(m|n)$ and $\mf{osp}(2|2n)$; see \mbox{\cite[Corollary 6.8]{CM21}}.  Now, we consider   the periplectic Lie superalgebra $\mf s := \pn$. It follows by an unpublished result of Vera Serganova  that    \begin{align} &K(\la) \text{ is simple } \Leftrightarrow \prod_{\alpha\in \Phi^+_\oa}(\langle \la+\rho,\alpha\rangle -1)\neq 0,\label{eq::pntypicalSer}
    \end{align}
   for each $\la \in \h^\ast$; see also \cite[Proposition 4.1]{CP24}. Suppose that $V$ is a simple module over $\mf s_\oa \cong \gl(n)$. Then, there exists a weight $\la \in \h^\ast$ such that the annihilators of $V$ is the same as that of $L(\la)$, by Duflo's theorem. It  follows by Theorem \ref{thm::16::Gorelik} that $K(V)$ is simple if and only if the conditions in \eqref{eq::pntypicalSer} hold. 
	\begin{rem}
	 Let $\g=\mf{sl}(2) =\langle e,h,f\rangle$ with the standard Chevalley generators $e,h$ and $f$, i.e., we have $[h,e]=2e$, $[h,f]=-2f$ and $[e,f]=h$. Consider the first Takiff algebra $\g_1$.	It is worth pointing out that not every central character of $\g_1$ is of the form $\chi_\la$, for $\la \in \h^\ast_1$. To see this, we set  the following notations 
		\begin{align*}
		&\ov e:=e^{(1)}= e\otimes \theta,~ \ov h:= h^{(1)}=h\otimes \theta,~\text{ and }\ov f:= f^{(1)}:=f\otimes \theta.
		\end{align*} 		 By \cite[Section 2.2]{Ch23} (see also \cite{M21}), one can show that the center $Z(\g_1)$ is  generated by the following two algebraically independent generators  $
		r_1:= \ov h^2+4\ov f\ov e,~r_2:= h\ov h+2\ov h+2(f\ov e+\ov fe). $ Define a central character $\chi: Z(\g_1)\rightarrow \C$ by letting $\chi(r_1)=0,~\chi(r_2)=1$. For any $\la \in \h_1^\ast$, we may observe that  
	\begin{align*}
	&\chi_\la(r_1)= \la(\ov h)^2,~\chi_\la(r_2) = \la(\ov h) (\la(h)+2).
	\end{align*} Therefore, $\chi$ does not equal the central character of any highest weight module.
\end{rem} 
 
 \section{Proofs of Theorems \ref{thm::1st} and \ref{thm::2nd}} \label{sect::eqv}
In this section, we let $\wtg=\gl(m|n), \mf{osp}(2|2n),$ or $\pn$ with a fixed  distinguished triangular decomposition. We shall keep the notations 
and assumptions of the previous subsections.
\subsection{Typical weights and typical Jordan blocks} \label{sect::31typ}  Recall from Subsection \ref{sect::132} that a Jordan block  is called typical provided that it contains a simple module with a typical highest weight.   The following is a restatement of Theorem \ref{thm::2nd}. 
\begin{thm} \label{thm::3}
  Suppose that $\wtg=\gl(m|n)$ or $\mf{osp}(2|2n)$ with $\kappa\in (\hellgeq)^\ast$. Suppose that $\wto_\ell^\kappa$ is a typical Jordan block in $\wto_\ell$. 
Then we  have an equivalence of categories: 
\begin{align*}
&K(\_):\mc O_\ell^\kappa \xrightarrow{\cong} \wto_\ell^\kappa, 
\end{align*} with the quasi-inverse $(\_)^{\wtg_\ell^1}$, preserving highest weights of any highest weight modules.
\end{thm}

We remark that there is an equivalence of categories for typical modules of the periplectic Lie superalgebra $\pn$ established by Serganova \cite[Theorem 5.2]{Se02}. It would be natural to establish an analog for general Takiff superalgebras associated with $\pn$.

The remaining part of this section is dedicated to the proof of Theorem \ref{thm::3}.

\subsection{Simplicity of highest weight Kac modules}\label{sect::322}
Before giving a proof of Theorem  \ref{thm::3}, we need some preparatory lemmas. In this subsection, we fix a  typical Jordan block  $\wto_\ell^\kappa$, for some $\kappa\in (\hellgeq)^\ast$.  For any element $x\in \wtg$ and subspace $\mf s\subseteq \wtg$,  recall from Subsection \ref{sect::22} that \begin{align*}
&x^{(i)} = x\otimes\theta^i\in \wtg_\ell,\text{ and }\mf s^{(i)} = \mf s\otimes \theta^i\subset  \wtg_\ell, \text{ for $0\leq i\leq \ell$.}
\end{align*} In addition, we set $x^{(i)}$ and $\mf s^{(i)}$ to be zero, for  $i>\ell$ or $i<0$.

For each $\la \in \h^\ast_\ell$, we define $K({\la}):= K(L(\la))$. As is well known,  the typical weights for Lie superalgebras $\mf{gl}(m|n), \mf{osp}(2|2n)$ and $\pn$ correspond to  the highest weights of simple Kac modules; see, e.g., \cite[Theorem 1]{Ka78}, \cite{PS89} and \cite[Proposition 4.1]{CP24}. The following is an analogue for Takiff superalgebras $\wtg_\ell$ of type I, for $\ell\geq 1$.
\begin{prop}\label{lem::4}
 Suppose that $\wtg = \gl(m|n), \mf{osp}(2|2n),$ or $\pn$ with $\la\in \h^\ast_\ell$. Then we have 
 $$K(\la)\cong \wtL({\la})\Leftrightarrow\text{$\la$ is typical}.$$ 
\end{prop}
\begin{proof} First, suppose that $\la\in\h^\ast_\ell$ is typical. Let $v_\la\in \wtL(\la)$ be a highest weight vector with respect to the distinguished Borel subalgebra $\widetilde{\mf b}$. Recall the reverse Borel subalgebra $\widetilde{\mf b}^r=\widetilde{\mf b}_\oa\oplus \wtg^{-1}$ and choose $X^{-}\in \Lambda^{\texttt{max}}(\wtg_\ell^{-1})$ to
 be a non-zero vector.  Recall from Subsections \ref{sect::oddreflbasic} and \ref{sect::oddrefl} that there is a chain of odd reflections and inclusions connecting Borel subalgebras from $\widetilde{\mf b}^r$ to $\widetilde{\mf b}$. Consequently,  $X^-v_\la$ is a $\widetilde{\mf b}^r$-highest weight vector of $\wtL(\la)$  by Proposition \ref{thm::Oddrefl}. 
 Recall that  $\wtL(\la)$ has a simple socle generated by $\Lambda^{\texttt{max}}(\wtg^{-1})\otimes L(\la)$. Let $\widetilde{v}_\la\in K(\la)$ be a $\widetilde{\mf b}$-highest weight vector of $K(\la)$  and $f: K(\la)\rightarrow \wtL(\la)$ be the canonical epimorphism such that $f(\widetilde{v}_\la) =v_\la$. Suppose on the contrary  that $K(\la)$ is not simple with a non-trivial proper submodule $M\subset K(\la)$, then  $X^-\widetilde{v}_\la \subseteq M$. This implies that $X^-v_\la=f(X^-\widetilde v_\la)=0$, which is contradictory. 

 Next, suppose that $K(\la)$ is a simple module for some weight $\la\in\h^\ast_\ell$. Subsequently, the $\widetilde{\mf b}^r$-highest weight vector of $K(\la)$ is $X^-v_\la$. It follows from Propositions \ref{thm::Oddrefl}  and \ref{thm::Oddreflforpn} that $\langle \la^{(\ell)}, \alpha\rangle \neq 0$, for all $\alpha\in \Phi_\ob^+$.  This completes the proof.
\end{proof}

Define $\psi_{\g_\ell}: U(\wtg_\ell)\rightarrow U(\h_\ell)$ to be the projection to the first summand in  the  decomposition $U(\wtg_\ell) = U(\g_\ell)\oplus\left(\wtg_\ell^{-1} U(\wtg_\ell)+U(\wtg_\ell)\wtg_\ell^1\right).$ Then the restriction of $\psi_{\g_\ell}$ to $Z(\g_\ell)$, also denoted by $\psi_{\g_\ell}$, defines a homomorphism of algebras. Recall the central elements $\widetilde{\Omega}\in Z(\wtg_\ell)$ and $\Omega'\in Z(\g_\ell)$ from Subsections \ref{sect::24typical} and \eqref{sect::262Kac}, respectively. We define  
$$\Omega:=\psi_{\g_\ell}(\widetilde{\Omega}).$$ Recall the Harish-Chandra homomorphism $\mf{hc}_\ell:Z(\g_\ell)\rightarrow U(\h_\ell)$ from Subsection \ref{sect::252}. The following corollary describes the relation between the two central elements. 
\begin{cor} \label{Coro::19::cent} Suppose that $\wtg=\mf{gl}(m|n)$ or $\mf{osp}(2|2n)$. Then, there is a positive integer $k$ such that $\Omega^k=c \Omega'$, for some non-zero scalar $c$.
\end{cor}
\begin{proof} For any weight $\la =(\la^{(0)},\ldots, \la^{(\ell)}) \in  \h^\ast_\ell$,  we have $\chi_\la(\Omega) = \la(\psi_{\h_\ell}(\Omega)) = \Pi_{\alpha\in \Phi^+_\ob}\langle \la^{(\ell)},\alpha\rangle$, by construction.  
Furthermore, it follows by Theorem \ref{prop::Kacsimple} and Proposition \ref{lem::4} that 
 $$\la \text{ is typical }\Leftrightarrow K(\la) \text{ is simple } \Leftrightarrow \la(\psi_{\h_\ell}(\Omega'))=\chi_\la(\Omega')\neq 0,$$ which  implies that $\psi_{\h_\ell}(\Omega')$ is divided by $\psi_{\h_\ell}(\Omega)$ in $U(\h_\ell).$ Since $\psi_{\h_\ell}(\Omega')$ is invariant under the dot-action of $W$ on $\h_\ell^\ast$ from \eqref{eq::dotaction} by Lemma \ref{lem11::G}, it follows that  $c\psi_{\h_\ell}(\Omega') = \psi_{\h_\ell}(\Omega)^k$, for some non-zero scalar $c\in \C$ and a  positive integer $k$.  
The conclusion follows from the injectivity of  $\psi_{\h_\ell}$ by Lemma \ref{lem11::G} again.
\end{proof}

We are now in a position to give a proof of Theorem \ref{thm::1st}. 
\begin{proof}[Proof of Theorem \ref{thm::1st}]
 The assertions in Part (i) follows by  the construction of $\widetilde{\Omega}$ from Subsection \ref{sect::24typical}. Next, suppose that $\wtg=\gl(m|n)$ or $ \mf{osp}(2|2n)$. Let $L$ be a simple module over $\wtg_\ell$ with $\widetilde{\chi}$ as its  central character. Then there is a simple $\g_\ell$-module $V$ such that $L$ is a quotient of $K(V)$ by \cite[Theorem A]{CM21}. We may observe that 
 \begin{align*}
  &\widetilde{\chi}(\widetilde{\Omega})\neq 0 \\  
  & \Leftrightarrow   \Omega'V\neq 0, \text{ by Corollary \ref{Coro::19::cent}} \\
   & \Leftrightarrow  K(V) \cong L \text{ by Theorem \ref{prop::Kacsimple}.}
 \end{align*} Furthermore, if $\widetilde{\chi} = \widetilde{\chi}_\la$, for some $\la \in \h^\ast_\ell$. Then $\widetilde{\chi}_\la$ is typical if and only if $K(\la)$ is simple, which is equivalent to the condition that $\la$ is typical by Proposition \ref{lem::4}. This proves the assertion in  Part (ii). The assertion in Part (iii) follows from \mbox{Proposition \ref{lem::4}}.
\end{proof}

\subsection{Irreducible characters of finite-dimensional modules over Takiff superalgebras} \label{sect::ch33}  In this subsection, we assume that $\widetilde{\g} =\mf{gl}(m|n), \mf{osp}(2|2n)$ or $\pn$. In \cite{BR13}, Babichenko and Ridout give a description of simple  modules over the $1$-st Takiff superalgebra associated with $\gl(1|1)$.  In this subsection, we will study the character problem of  finite-dimensional simple modules over type I Takiff superalgebras. 
  \begin{lem}
   Suppose that $\wtg = \gl(m|n), \mf{osp}(2|2n),$ or $\pn$. Let $\la\in \h^\ast_\ell$. Then the following are equivalent:
   \begin{itemize}
       \item[(1)] $\widetilde{L}(\la)$ is finite-dimensional.
       \item[(2)]  Let $\alpha\in \Phi^+_\oa$. Then  $\langle \la^{(i)}, \alpha \rangle$ is a non-negative integer  if $i=0$, and it is zero otherwise.
   \end{itemize}
  \end{lem}
  \begin{proof} Let $\g = [\g,\g]\oplus\mf z(\g)$, where $\mf z(\g)$ is the center of $\g$.
  Using the Kac induction functor, we can show that $\dim \widetilde{L}(\la)<\infty$ if and only if $\dim L(\la)<\infty$.  Assume the condition $(2)$,  then $L(\la)$ is lifted from a simple highest weight $\g$-module on which  $[\g,\g]\otimes \theta^k$ acts trivially and $\mf z(\g)\otimes  \theta^k$ acts as a scalar, for each $k>0$. It follows by the standard representation theory that $\dim L(\la)<\infty$. 
  
  Next, we assume that  $\dim L(\la)<\infty$.     Suppose on the contrary that $\langle \la^{(k)}, \alpha \rangle \neq 0$, for some $k>0$ and $\alpha\in \Phi_\oa^+$. We may choose $k$ to be the smallest integer satisfying this condition, and so $L(\la)$ can be regarded as the simple highest weight module over the Takiff Lie superalgebra $\mf g_k$ with the highest weight $(\la^{(0)}, \ldots, \la^{(k)}) \in \h^\ast_k$. Now,   let $v\in L(\la)$ be a highest weight vector. We can choose a subalgebra   $\langle e,h,f\rangle\subset \g$  consisting of an $\mf{sl}(2)$-triple  such that $e\in \wtg^\alpha, f\in \wtg^{-\alpha}$ and $h\in \mf h_\ell$. We then form the subalgebra $\mf s_k\subset \mf g_k$. Since  $U(\mf s_k)v$ is a non-trivial  image of a simple Verma module over $\mf s_k$ by \cite[Theorem 7.1]{Wi11}, we may conclude that $L(\la)$ is infinite-dimensional. Therefore, $L(\la)$ is lifted from a finite-dimensional $\g$-module. This completes the proof.
  \end{proof}

 For each element $w$ in the Weyl group $W$ of $\g$, we let $\texttt{len}(w)$ be the length of $w$. Let $\la =(\la^{(0)},\la^{(1)},\ldots, \la^{(\ell)}) \in \h^\ast_\ell$. Recall that $\rho$ and $\widetilde{\rho}$ denote the Weyl vectors of $\g$ and $\wtg$, respectively. We formulate the following definitions of {\em semitypical weights}, generalizing the definition in \cite{BR13} for Takiff $\mf{gl}(1|1)$: 
\begin{enumerate}
    \item[(i)] Suppose that $\widetilde{\g} =\mf{gl}(m|n), \mf{osp}(2|2n)$.    Then $\la$ is  called semitypical if
  \[\text{either $\prod_{\alpha \in \ov{\Phi}_{\ob}^+}\langle \la^{(0)} +\widetilde\rho,  \alpha\rangle\neq 0$ or $\prod_{\alpha \in \ov{\Phi}_{\ob}^+}\langle \la^{(i)}, \alpha\rangle\neq 0$ for some $i>0.$}\]  
  \item[(ii)]  Suppose that $\widetilde{\g} =\pn$.  Then $\la$  is called  semitypical if 
  \[\text{either $\prod_{\alpha \in {\Phi}_{\oa}^+}\langle \la^{(0)} +\rho,  \alpha\rangle\neq 1$ or $\prod_{\alpha \in {\Phi}_{\oa}^+}\langle \la^{(i)}, \alpha\rangle\neq 0$ for some $i>0.$}\]  
\end{enumerate}

  The following theorem, generalizing the classical formula of Kac \mbox{\cite[Proposition 2.8]{Ka78}}, is the main result in this subsection: 

 \begin{thm} \label{thm24::characters} Suppose that $\widetilde{\g} =\mf{gl}(m|n), \mf{osp}(2|2n)$ or $\pn$.
 Let $\la =(\la^{(0)},\ldots, \la^{(\ell)}) \in \h^\ast_\ell$ be   such that $\dim \wtL(\la)<\infty$. 
Then we have 
\begin{enumerate}
    \item[(1)]   Suppose that  $\la$ is semitypical and $0\leq k\leq \ell$ is the smallest number such that $\prod_{\alpha \in \Phi_{\ob}^-}\langle \la^{(i)}, ~ \alpha\rangle= 0,$ for all $k<i$. Then
  \begin{align}
  &\ch \widetilde L(\la) = \frac{\prod_{\alpha\in \Phi_\ob^-} (1+e^\alpha)^{k+1}}{\prod_{\beta\in \Phi_\oa^-} (1-e^{\beta})}\sum_{w\in W} (-1)^{\emph{\texttt{len}(w)}} e^{w(\la^{(0)}+\rho)-\rho} \label{eq::ChThm21}
  \end{align}
    \item[(2)] Suppose that $\la$ is not semitypical. Then $\ch \wtL(\la)$ coincides with the character of the simple highest weight module over $\wtg$ with highest weight $\la^{(0)}$. 
\end{enumerate}

 \end{thm}
 \begin{proof} First, assume that $\la$ is semitypical. We may note that $$\prod_{\beta\in \Phi_\oa^-}\frac{1}{1-e^{\beta}}\sum_{w\in W} (-1)^{\texttt{len}(w)} e^{w(\la^{(0)}+\rho)-\rho}$$ is the character of the finite-dimensional simple module over $\g$ with the highest weight $\la^{(0)}$.  We will proceed with an induction on the non-negative integer $\ell -k$. If it is zero, then the assertion can be directly derived from Proposition \ref{lem::4}. Suppose that $\ell>k$, then we have  $\prod_{\alpha \in \Phi_{\ob}^-}\langle \la^{(i)}, ~ \alpha\rangle= 0,$ for all $k<i$. In particular, this implies that $\la^{(\ell)}$ is the weight of a one-dimensional module over $\wtg$ and allows us to define a one-dimensional module $V_\la$ over $\wtg_\ell$ such that the weight of $V_\la$ is  $(0, 0, \ldots, -\la^{(\ell)})\in \h^\ast_\ell$. We may note that the functor $\_\otimes V_\la: \wto_\ell\rightarrow \wto_\ell$ forms an auto-equivalence of $\widetilde{\mc O}_\ell$ preserving characters. Let $\mu = (\mu^{(0)},\ldots, \mu^{(\ell)})\in \h^\ast_\ell$ be the  highest weight of $\wtL(\la)\otimes V_\la$. We may note that $\mu^{(\ell)}=0$ and so $\wtL(\la)\otimes V_\la$ can be regarded as a simple highest weight module over $\wtg_{\ell-1}$ with highest weight $(\la^{(0)}, \la^{(1)},\ldots, \la^{(\ell-1)})$. The conclusion follows  by induction hypothesis. 

  Finally, we assume that  $\la$ is not semitypical. Then the module $\widetilde L(\la)$ can be regarded as a simple highest weight module over $\wtg$ with the highest weight $\la^{(0)}$ by letting $\wtg\otimes \theta^i$ act  trivially on $\widetilde L(\la)$, for all $i>0$.  This completes the proof.
 \end{proof}

\begin{rem} To describe the characters of irreducible finite-dimensional modules $\wtL(\la)$ with $\la\in \h^\ast_\ell$ non-semitypical, we can reduce to  the problem of determining the characters of finite-dimensional irreducible modules over Lie superalgebras of type $\mf{gl}, \mf{osp}$ or $\mf p$, as  has already been established in \cite{Br03, CL10, CLW11, B+9}.
\end{rem}

 	\begin{rem} \label{rem26::gl11}Suppose that $\wtg= \gl(1|1)$. Then all simple $\wtg_\ell$ are finite-dimensional, since the even part of $\wtg_\ell$ is abelian.  Let $E:=E_{12}, F:=E_{21}$ and $H:=E_{11}+E_{22}\in \gl(1|1)$.  Let $\la \in \h^\ast_\ell =(\la^{(0)}, \ldots,\la^{(\ell)})$ be a weight. If $\la$ is not semitypical, then $\wtL(\la)$ is lifted from a simple $\gl(1|1)$-module.   If $\la$ is semitypical, there is $0\leq k\leq \ell$   such that $\la^{(k)}(H)\neq 0$ and $\la^{(i)}(H)=0$, for all $i>k$.  Let $\alpha$ be the unique positive odd root of $\gl(1|1)$, that is, $\alpha =\vare_1-\vare_2$, which we regard as an element in $\h^\ast_\ell.$  Let $v$ denote a highest weight vector of $K(\la +(\ell-k)\alpha)$. Then the following set  	\begin{align*}
		&{\bf B}:= \{F^{(i_1)}F^{(i_2)}\cdots F^{(i_k)} \cdot  F^{(q_\la+1)}\cdots  F^{(\ell)} v|~0\leq i_1<i_2<\cdots <i_k\leq k\}
	\end{align*} spans a submodule isomorphic to $\wtL(\la)$. 
    \end{rem}

\subsection{Proof of Theorem \ref{thm::3}}\label{sect::34}
The goal of this subsection is to complete the proof of \mbox{Theorem \ref{thm::3}}. We fix an element $\kappa\in (\hellgeq)^\ast$ such that $\wto_\ell^\kappa$ is a typical Jordan block, throughout this subsection. Recall the equivalence relation $\sim$ on $\h^\ast_\ell$, as defined in Subsection \ref{sect::223}.

 The following lemma will be useful. 
\begin{lem} \label{coro::6}
 Let $M\in \wto_\ell^\kappa$ be indecomposable such that  the multiplicities $[M: \wtL(\la)]$ and $[M:\wtL(\mu)]$ are non-zero, for some $\la,\mu\in \h^\ast_\ell$. 
Then $\la^{(0)} -\mu^{(0)} \in \Z\Phi_\oa$.  
Furthermore, $M$ is generated by its $\wtg_\ell^1$-invariant subspace, that is, $U(\wtg_\ell)\cdot M^{\wtg_\ell^1}=M$.
\end{lem}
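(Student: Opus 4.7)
The plan is to derive~(i) from Proposition~\ref{lem::blockdec} together with a Kac-functor analysis of composition factors of typical Verma modules, and then to obtain~(ii) via a $d^{\wtg}$-grading argument made available by~(i). For~(i), Proposition~\ref{lem::blockdec} reduces the problem to verifying $\la^{(0)}-\mu^{(0)}\in\Z\Phi_\oa$ whenever $\la\sim\mu$, and by transitivity of $\sim$ to the single-link case $\la\to\mu$, i.e.\ $[\wtV(\la):\wtL(\mu)]\neq 0$. Since $\wto_\ell^\kappa$ is typical, the remark following~\eqref{eq::17} forces $\mu^{\scriptscriptstyle(\geq 1)}=\la^{\scriptscriptstyle(\geq 1)}$, and in particular $\mu$ is typical. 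Combining $\wtV(\la)\cong K(M(\la))$ from Proposition~\ref{lem::111}(ii), the exactness of $K(\_)$, and the analogue of Proposition~\ref{prop::12} for $\mc O_\ell$ (namely~\cite[Lemma~6.2]{CT23}), one produces a descending filtration of $\wtV(\la)$ with sections $K(L(\xi))$ indexed by the composition factors $L(\xi)$ of $M(\la)$. By Proposition~\ref{lem::4} each such $K(L(\xi))\cong\wtL(\xi)$ is simple in the typical block, so $\mu=\xi$ for some composition factor $L(\xi)$ of the $\g_\ell$-Verma $M(\la)$, yielding $\la-\mu\in\Z_{\geq 0}\Phi_\oa^+$ and hence $\la^{(0)}-\mu^{(0)}\in\Z\Phi_\oa$.

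For~(ii), I exploit the grading operator $d^{\wtg}$ of~\eqref{eq::graop}, which sits in $Z(\g)\otimes 1\subseteq Z(\g_\ell)$ and satisfies $\alpha(d^{\wtg})=0$ for every $\alpha\in\Phi_\oa$. By~(i), every composition factor $\wtL(\la)$ of the indecomposable $M$ has $\la^{(0)}(d^{\wtg})$ equal to a common constant $c_0$. Since each $\wtL(\la)\cong K(L(\la))$ has finite $d^{\wtg}$-spectrum descending from $c_0$, the generalized eigenspace decomposition $\Res M=\bigoplus_c M[c]$ is supported on a finite range of levels with maximum $c_0$; and because $\ad(d^{\wtg})$ acts on $\wtg_\ell^1$ with positive eigenvalue, the top piece $M[c_0]$ is killed by $\wtg_\ell^1$ and thus contained in $M^{\wtg_\ell^1}$. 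It therefore suffices to prove $U(\wtg_\ell)M[c_0]=M$.

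Setting $N:=U(\wtg_\ell)M[c_0]$ and $Q:=M/N$, the $d^{\wtg}$-grading on $\wtg_\ell$ induces a $d^{\wtg}$-graded structure on $N$ with $N[c_0]=M[c_0]$, so $Q[c_0]=0$. If $Q\neq 0$, then Proposition~\ref{prop::12} supplies a simple quotient $Q\twoheadrightarrow\wtL(\nu)$ for some composition factor $\wtL(\nu)$ of $M$; by~(i) this $\nu$ satisfies $\nu^{(0)}(d^{\wtg})=c_0$, and the highest weight vector of $\wtL(\nu)\cong K(L(\nu))$ lies at $d^{\wtg}$-level $c_0$. Lifting this vector through the $\g_\ell$-equivariant surjection $Q\twoheadrightarrow\wtL(\nu)$ and projecting onto its $c_0$-eigenspace yields a non-zero element of $Q[c_0]$, contradicting $Q[c_0]=0$. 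The principal obstacle is the non-artinianity of $\wto_\ell^\kappa$, which is handled by the descending filtration of Proposition~\ref{prop::12}; once~(i) locks the $d^{\wtg}$-spectrum onto a single top level, the grading argument proceeds routinely.
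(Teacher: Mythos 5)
Your proof is correct, and it is close in spirit to the paper's, with one genuine difference in the second half.

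For the first assertion, your route is essentially the paper's: by Proposition~\ref{lem::blockdec} reduce $\approx$ to $\sim$, reduce by transitivity to a single link $\la\to\mu$, and then use Proposition~\ref{lem::111}(ii), exactness of $K(\_)$, and Proposition~\ref{lem::4} (simplicity of $K(L(\xi))$ in a typical block) to identify composition factors of $\wtV(\la)$ with those of $M(\la)$ under $K$, giving $\la^{(0)}-\mu^{(0)}\in\Z\Phi_\oa$. The paper phrases this as the coincidence of $\rightarrow$ with $\rightarrow_0$, but it is the same argument.

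For the second assertion, the paper argues directly with weight spaces: take pre-images $v_0,\ldots,v_k$ of highest weight vectors from the filtration of Lemma~\ref{lem::1}(iii), observe they generate $M$, and derive a contradiction from $\wtg_\ell^1 v_j\neq 0$ by noting it would produce a weight $\la_j^{(0)}+\alpha$ ($\alpha\in\Phi_\ob^+$) in $\bigcup_i(\la_i^{(0)}-\Z_{\geq 0}\Phi^+)$, incompatible with $\la_i^{(0)}-\la_j^{(0)}\in\Z\Phi_\oa$. You repackage this via the operator $d^{\wtg}$: since $d^{\wtg}$ kills $\Phi_\oa$ and has fixed nonzero sign on $\Phi_\ob^{\pm}$, assertion~(i) forces all highest weights of composition factors to a single $d^{\wtg}$-level $c_0$, which is extremal in the grading of $\Res M$; the extremal piece $M[c_0]$ is then killed by $\wtg_\ell^1$. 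This is a legitimate and clean way to crystallize the paper's weight inequality. Your route to showing $U(\wtg_\ell)M[c_0]=M$, however, is more circuitous than necessary: you pass through the quotient $Q=M/U(\wtg_\ell)M[c_0]$ and invoke Proposition~\ref{prop::12} to extract a simple quotient living at level $c_0$; the paper instead observes that the generating vectors $v_0,\ldots,v_k$ of Lemma~\ref{lem::1}(iii) already sit at level $c_0$ and therefore lie in $M^{\wtg_\ell^1}$, which finishes immediately. One small caution: the sign you assert (``$\ad(d^{\wtg})$ acts on $\wtg_\ell^1$ with positive eigenvalue'') matches the eigenvalue formula around~\eqref{eq::10}, but a direct computation with the displayed $d^{\wtg}=\sum_{i>m}E_{ii}$ for $\gl(m|n)$ gives the opposite sign on $\wtg^1$; in either convention $c_0$ is the unique extremal level and $\wtg_\ell^1$ moves strictly off it, so the argument is unaffected, but it is worth stating it sign-agnostically.
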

\begin{proof}
	Define a relation $\rightarrow_0$ on $\h^\ast\times \{\kappa\}\subseteq \h^\ast_\ell$ by declaring that 
	\begin{align*}
		&\gamma\rightarrow_0 \eta, \text{ if }[{M}(\gamma):L(\eta)] \neq 0.
	\end{align*} Let $\sim_0$ be the equivalence relation on $\h^\ast\times \{\kappa\}$ generated by $\sim_0$. 
     	By    Proposition \ref{lem::4}, it follows that    
     		$\gamma\rightarrow \eta $ if and only if $ \gamma\rightarrow_0 \eta,$ for any   weights $\gamma,\eta\in \h^\ast\times \{\kappa\}$. Therefore, $\gamma\sim \eta$ if and only if $\gamma\sim_0\eta$. The first assertion now follows by \mbox{Proposition \ref{lem::blockdec}}.
     
     Next, we prove the second assertion.  By Lemma \ref{lem::1} there exists a filtration 
     \begin{align*}
     &0\subseteq M_0\subseteq M_1\subseteq M_2\subseteq \cdots \subseteq M_{k+1}=M,
     \end{align*} in which $M_{i+1}/M_i$ are highest weight modules of highest weights $\la_i\in \h^\ast_\ell$, for $0\leq i\leq k$. Let $v_i\in M_{i+1}$ be the pre-image of a  highest weight vector in $M_{i+1}/M_i$, under the canonical quotient map $M_{i+1}\rightarrow M_{i+1}/M_i$. We may note that $v_0,v_1,\ldots, v_{k}$ generate $M$. Therefore, it suffices to show that  all the vectors $v_0,v_1,v_2,\ldots ,v_{k}$ lie in $M^{\wtg_\ell^1}$. Suppose on the contrary that $\wtg^1_\ell\cdot v_j\neq 0$, for some $0\leq j\leq k$. Then the weight subspace $M^{\la_j+\alpha}$ is non-zero, for some root $\alpha\in \Phi^+_\ob$.  Note in addition that there is $0\leq i\leq k$ such that   $\la_j+\alpha  \in \la_i-\Z \Phi^+$.  However, by the first assertion, we have $\la_i-\la_j\in \Z\Phi_\oa$, a contradiction. This completes the proof. 
\end{proof}

Let $\Id_{\mc O_\ell^\kappa}$ and $\Id_{\wto_\ell^\kappa}$ denote the identity (endo)functors on ${\mc O_\ell^\kappa}$ and ${\wto_\ell^\kappa}$, respectively. 
Let $\phi_{(\_)}: \Id_{\mc O_\ell^\kappa} \rightarrow K(\_)^{\wtg_\ell^1} $ and $\psi_{(\_)}: K(\_^{\wtg_\ell^1}) \rightarrow \Id_{\wto_\ell^\kappa}$ be the counit and unit adjunctions, respectively. For any $V\in \mc O_\ell^\kappa$ and $M\in \wto_\ell^\kappa$, we have 
\begin{align*}
&\phi_V: V \rightarrow K(V)^{\wtg_\ell^1},~v\mapsto 1\otimes v,~\text{for any }v\in V;\\
 &\psi_M:   K(M^{\wtg_\ell^1})\rightarrow M,~u\otimes m\mapsto um,~\text{for any }u\in U(\wtg_\ell),~m\in M.
\end{align*}
\begin{lem} \label{lem::77} If  $V\in \mc O_\ell^\kappa$ and $M\in \wto_\ell^\kappa$ have finite lengths, then 
	both $\phi_V$ and $\psi_M$ are isomorphisms.
\end{lem}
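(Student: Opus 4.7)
The strategy is to first prove both statements for simple objects, then propagate to finite-length objects by dévissage.

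For the base case, fix $\la \in X_\kappa$, so $\la$ is typical and $\wtL(\la) = K(L(\la))$ is simple by Proposition \ref{lem::4}. The unit $\phi_{L(\la)}\colon v\mapsto 1\otimes v$ is manifestly injective via PBW, and the key point is surjectivity. I would exploit the grading $K(L(\la)) = \bigoplus_{k\geq 0} \Lambda^k(\wtg_\ell^{-1}) \otimes L(\la)$: the subalgebra $\g_\ell$ preserves the $\Lambda^k$-degree, while $\wtg_\ell^{\pm 1}$ shift it by $\pm 1$ (the shift for $\wtg_\ell^1$ follows from $[\wtg^1,\wtg^{-1}]\subseteq \g$ together with $\wtg_\ell^1\cdot(1\otimes L(\la))=0$). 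Writing $m = \sum_k m_k \in K(L(\la))^{\wtg_\ell^1}$, the identity $\wtg_\ell^1 m=0$ splits by grading into $\wtg_\ell^1 m_k=0$ for each $k$. If some $m_{k_0}\neq 0$ with $k_0\geq 1$, then $\Lambda(\wtg_\ell^1)m_{k_0}=\C m_{k_0}$ together with PBW yields $U(\wtg_\ell) m_{k_0}\subseteq \bigoplus_{j\geq k_0}\Lambda^j(\wtg_\ell^{-1})\otimes L(\la)$, a proper subspace of $K(L(\la))$; this contradicts simplicity. Hence $m = m_0 \in 1\otimes L(\la)$, and $\phi_{L(\la)}$ is an isomorphism. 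The isomorphism $\psi_{\wtL(\la)}$ is then tautological, since $\wtL(\la)^{\wtg_\ell^1}=1\otimes L(\la)$ makes the counit $K(L(\la))\to \wtL(\la)$ the canonical identification.

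Next I would propagate $\phi_V$ to all finite-length $V \in \mc O_\ell^\kappa$ by induction on length, using that $\phi_V$ is always injective. Given a short exact sequence $0\to V'\to V\to L(\mu)\to 0$ with $\mu\in X_\kappa$, apply exact $K(\_)$ and chase elements: for $w\in K(V)^{\wtg_\ell^1}$, its image in $K(L(\mu))^{\wtg_\ell^1}$ equals $1\otimes l$ for some $l\in L(\mu)$ by the base case; lifting $l$ to $v\in V$, the difference $w-1\otimes v$ lies in $K(V')^{\wtg_\ell^1}$, which equals $1\otimes V'$ by induction, so $w\in 1\otimes V$.

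For $\psi_M$ with $M\in\wto_\ell^\kappa$ of finite length, surjectivity follows from Lemma \ref{coro::6}: by Krull--Schmidt $M=\bigoplus_i M_i$ with each indecomposable $M_i$ generated by $M_i^{\wtg_\ell^1}$, so $M=U(\wtg_\ell)\cdot M^{\wtg_\ell^1}$. For injectivity, note that $\Res M$ has finite length in $\mc O_\ell^\kappa$ by dévissage, since each composition factor $\wtL(\nu)\in \wto_\ell^\kappa$ restricts to $\Lambda(\wtg_\ell^{-1})\otimes L(\nu)$, which is of finite length by Lemma \ref{lem11}. Hence $M^{\wtg_\ell^1}\subseteq \Res M$ has finite length, and by the previous paragraph $\phi_{M^{\wtg_\ell^1}}$ is an isomorphism; the triangle identity $(\psi_M)^{\wtg_\ell^1}\circ\phi_{M^{\wtg_\ell^1}}=\id$ then forces $(\psi_M)^{\wtg_\ell^1}$ to be an isomorphism, so $(\ker\psi_M)^{\wtg_\ell^1}=0$. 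But any non-zero submodule $N$ of $K(M^{\wtg_\ell^1})\in\wto_\ell$ has $N^{\wtg_\ell^1}\neq 0$: a non-zero weight vector generates a finite-dimensional $\widetilde{\mf b}_\ell$-submodule on which the super-commutative Lie superalgebra $\wtg_\ell^1$ acts by commuting nilpotents and hence has invariants (Engel). Thus $\ker\psi_M=0$. The main obstacle is the base case --- pinning down $K(L(\la))^{\wtg_\ell^1}$ exactly for typical $\la$; the grading-plus-simplicity argument above is the cleanest route, since the alternative via the central element $\Omega$ of Theorem \ref{prop::Kacsimple} is more delicate as $\Omega\in Z(\g_\ell)$ does not commute with $\wtg_\ell^{-1}$ and fails to act as a scalar on the whole Kac module.
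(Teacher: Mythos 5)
Your proof is correct and follows essentially the same strategy as the paper: induction on length for $\phi_V$ via the exactness of $K(\_)$ and the simplicity of $K(L(\mu))$ in the typical block; surjectivity of $\psi_M$ from Lemma~\ref{coro::6}; and injectivity of $\psi_M$ deduced from the already-proved isomorphism $\phi_{M^{\wtg_\ell^1}}$ together with the fact that any non-zero object of $\wto_\ell$ has a non-zero $\wtg_\ell^1$-invariant. The two genuinely local differences: (a)~where the paper cites \cite[Remark 3.3, Corollary 4.3]{CM21} for the base case $K(L(\mu))^{\wtg_\ell^1}\cong L(\mu)$, you instead give a self-contained argument via the $d^{\wtg}$-grading and simplicity of the Kac module, which is a nice way to make the lemma independent of that reference; (b)~for injectivity of $\psi_M$ you invoke the triangle identity $(\psi_M)^{\wtg_\ell^1}\circ\phi_{M^{\wtg_\ell^1}}=\id$, while the paper compares lengths directly --- these are interchangeable. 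One small caveat on (a): your degree-shift and PBW argument implicitly uses that $d^{\wtg}\in Z(\g_\ell)$ so that $\g_\ell$ preserves each $\Lambda^k(\wtg_\ell^{-1})\otimes L(\mu)$, which is true but worth flagging; and in the dévissage you should note that in a typical block every highest weight is typical (clear from $\la^{(\ell)}=\kappa^{(\ell)}$), so the base case applies uniformly.
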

\begin{proof} 
 We first show that $\phi_V$ is an isomorphism. Let $0\rightarrow V_0\rightarrow V\xrightarrow{f} V_1\rightarrow 0$ be a short exact sequence in $\mc O_\ell^\kappa$ such that $V_1$ is a simple module. By the exactness of $K(\_)$, we get a short exact sequence in $\wto_\ell$:
	\begin{align*}
	&0\rightarrow K(V_0)\rightarrow K(V) \xrightarrow{K(f)} K(V_1)\rightarrow 0. 
	\end{align*}  By Proposition \ref{lem::4}, $K(V_1)$ is simple. It follows by \mbox{\cite[Remark
3.3, Corollary 4.3]{CM21}} that  $K(V_1)^{\wtg_\ell^1}\cong V_1$ (see also  \cite[Corollary 4.5]{CM21}). Now, consider the homomorphism $(K(f))^{\wtg_\ell^1}: K(V)^{\wtg_\ell^{1}}\rightarrow K(V_1)^{\wtg_\ell^1}$ induced by applying $(\_)^{\wtg_\ell^1}$ to $K(f)$.  Since   $1\otimes V\subseteq K(V)^{\wtg_\ell^{1}}$  is sent to $V_1$ under $(K(f))^{\wtg_\ell^1}$,  the homomorphism  $(K(f))^{\wtg_\ell^1}$ is  non-zero. Therefore, we obtain a commutative diagram with exact rows and columns 
	\begin{align}
	&\xymatrixcolsep{2pc} \xymatrix{
		0 \ar[r]     &V_0 \ar[r]  \ar@<-2pt>[d]^{\phi_{V_0}}   &  V\ar[d]^{\phi_{V}} \ar[r]  &V_1 \ar[r] \ar@<-2pt>[d]^{\phi_{V_1}}  &0 \\0 \ar[r]    &   K(V_0)^{\wtg_\ell^1} \ar[r] &   K(V)^{\wtg_\ell^1} \ar[r] & K(V_1)^{\wtg_\ell^1} \ar[r] & 0} \label{eq::8}
\end{align} By using induction on the lengths of modules and the short five lemma, we can deduce that $\phi_V$ is an isomorphism. 

Next, we will show that $\psi_M$ is also an isomorphism. Let $C$ and $D$ be the kernel  and cokernel of $\psi_M$, respectively, and so that there is the following   exact sequence
\begin{align*}
&0\rightarrow C\rightarrow K(M^{\wtg_\ell^1}) \xrightarrow{\psi_M} M \rightarrow D\rightarrow 0.
\end{align*}  By Lemma \ref{coro::6}, we have $D=0$. 
It remains to show that $C=0$. To see this, we may observe that  $\C 1\otimes M^{\wtg_\ell^1}\subseteq K(M^{\wtg_\ell^1})^{\wtg_\ell^1}$ surjects onto $M^{\wtg_\ell^1}$ under the homomorphism $(\psi_M)^{\wtg_\ell^1}$. This implies that  $K(M^{\wtg_\ell^1})^{\wtg_\ell^1}\xrightarrow{(\psi_M)^{\wtg_\ell^1}} M^{\wtg_\ell^1}$ is an epimorphism. Now, we get a short exact sequence
\begin{align*}
&0\rightarrow C^{\wtg_\ell^1}\rightarrow K(M^{\wtg_\ell^1})^{\wtg_\ell^1} \rightarrow M^{\wtg_\ell^1}\rightarrow 0.
\end{align*} As we have already proven above,  $\phi_{M^{\wtg_\ell^1}}:  M^{\wtg_\ell^1} \xrightarrow{\cong} K(M^{\wtg_\ell^1})^{\wtg_\ell^1}$ is an isomorphism, it follows that $C^{\wtg_\ell^1}=0$. Since $C$ admits a filtration of highest weight modules,  we can conclude that $C=0$. This completes the proof. 
\end{proof}

Now we are in a position to prove Theorem \ref{thm::3}.
\begin{proof}[Proof of Theorem \ref{thm::3}]
 To complete the proof, it suffices to show the following claim:
	
	{\bf Claim.} {\em Let $V\in \mc O_\ell^\kappa$ and $M\in \wto_\ell^\kappa$. Then
	both $\phi_V$ and $\psi_M$ are isomorphisms.}
	
	To prove the claim, we only need to prove that $\phi_V$ is an isomorphism, since the assertion about the  isomorphism of $\psi_M$ can be proved by an argument identical to that used in Lemma \ref{lem::77}. 
	
	We first note that $\phi_V$ is an inclusion. It remains to show the surjectivity of $\phi_V: V\rightarrow K(V)^{\wtg_\ell^1}$. Suppose that, on the contrary, there is $k>0$ such that $$(\Lambda^k(\wtg_\ell^{-1})\otimes V)^{\wtg_\ell^1}\neq 0.$$
	Then there is a non-zero vector $\sum_{i=1}^s y_i\otimes v_i\in (\Lambda^k(\wtg_\ell^{-1})\otimes V)^{\wtg_\ell^1}$, in which we assume that $y_1,\ldots,y_s\in \Lambda^k(\wtg_\ell^{-1})$ are linearly independent and $v_1,\ldots,v_s \in V$ are non-zero vectors. 
	 By \cite[Lemma 6.2]{CT23}, 
	there exists a descending filtration 
	\begin{align*}
		V=V_0\supseteq  V_1\supseteq V_2\supseteq \cdots,  
	\end{align*} such that $\bigcap_{i\geq 0} V_i =0$ and $V_j/V_{j+1}$ are simple, for all $j\geq 0$. 
	  Then there is $p\geq 0$ such that $v_1,v_2,\ldots ,v_s \in V\backslash V_p$. Let $\ov{v_i}$ denote the image of $v_i$ under the quotient map $V\rightarrow V/V_p$, for $1\leq i\leq s$. However, we observe  that $\sum_{i=1}^s y_i\otimes \ov{v_i}$ is a non-zero vector in $(\Lambda(\wtg_\ell^{-1})^k\otimes V/V_p)^{\wtg_\ell^1}\subseteq K(V/V_p)^{\wtg_\ell^1}$, and it does not lie in $\phi_{V/V_p}(V/V_p) =V/V_p \subseteq K(V/V_p)^{\wtg^1_\ell}$. Since $V/V_p$ is of finite length, this contradicts the conclusion in \mbox{Lemma \ref{lem::77}.} This completes the proof.
\end{proof}

 \section{Whittaker modules} \label{sect::4}
 Throughout this section, we let $\wtg=\gl(m|n), \mf{osp}(2|2n)$, or $\pn$. We shall retain the notations and assumptions of  the previous  sections. 
Recall that we fix  $\wtg =\widetilde{\mf n}\oplus \mf h\oplus \widetilde{\mf n}^-$ to be the   triangular decomposition from Section \ref{sect::2.1}.  
 \subsection{Non-singular Whittaker modules}
 \label{sect::42} We define $\mf n^{\pm}:=(\widetilde{\mf n}^\pm)_\oa$ and the corresponding Takiff superalgebras $\mf n_\ell^\pm$.  
A character $\zeta: {\mf n}_{\ell}\rightarrow \C$  is called \textit{non-singular} if $\zeta(\mathfrak{g}_{\ell}^{\alpha})\ne0$, for  any simple root $\alpha$. Here $\g^\alpha_\ell$  denotes the root space of $\g_\ell$ corresponding to $\alpha$.

Throughout this section, we assume that $\zeta: {\mf n}_{\ell}\rightarrow \C$ is a non-singular character. A $\g_\ell$-module $M$ is said to be a Whittaker $\g_\ell$-module associated to $\zeta$ if $M$ is finitely generated, locally finite over $Z(\g_\ell)$ and $x-\zeta(x)$ acts on $M$ locally nilpotently, for any $x\in \mf n_\ell$.  
Let  $\mathcal{N}_\ell(\zeta)$  denote the category of all Whittaker $\g_\ell$-modules associated to $\zeta$. The following lemma shows that such a category is independent of the choice of the non-singular characters $\zeta$, up to equivalence. 

\begin{lem}\label{lem::200}
Suppose that $\zeta, \eta :\mf n\rightarrow \C$ are two non-singular characters. Then $\mathcal{N}_\ell(\zeta)$ and $\mathcal{N}_\ell(\eta)$ are equivalent. 
\end{lem}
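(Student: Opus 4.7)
The plan is to deduce the lemma as a direct formal consequence of the Kostant-style equivalence of Proposition~\ref{prop::27}. That proposition provides, for each non-singular character $\xi:\mf n_\ell^+\rightarrow\C$, an equivalence between $\mc N_\ell(\xi)$ and the category of finite-dimensional $Z(\g_\ell)$-modules, realized by the Whittaker-vector functor
\[
\mathrm{Wh}_\xi:\mc N_\ell(\xi)\longrightarrow Z(\g_\ell)\mod,
\quad
M\longmapsto\{v\in M\mid (x-\xi(x))^{N}v=0\ \forall x\in\mf n_\ell^+,\ N\gg 0\},
\]
with quasi-inverse sending a finite-dimensional $Z(\g_\ell)$-module $V$ to $U(\g_\ell)\otimes_{Z(\g_\ell)U(\mf n_\ell^+)}V_\xi$, where $V_\xi$ denotes $V$ viewed as a $Z(\g_\ell)U(\mf n_\ell^+)$-module with $\mf n_\ell^+$ acting by $\xi$.

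Granting this, the proof is immediate: applying Proposition~\ref{prop::27} to each of $\zeta$ and $\eta$ produces a chain of equivalences $\mc N_\ell(\zeta)\cong Z(\g_\ell)\mod_{\mathrm{fd}}\cong\mc N_\ell(\eta)$, where the middle term (finite-dimensional $Z(\g_\ell)$-modules) does not depend on the choice of non-singular character. Thus the composition $\mathrm{Wh}_\eta^{-1}\circ\mathrm{Wh}_\zeta$ yields the desired equivalence; at the level of simple objects it sends $L(\chi,\zeta)$ to $L(\chi,\eta)$ for every central character $\chi:Z(\g_\ell)\rightarrow\C$.

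All the substantive content is thereby packaged into Proposition~\ref{prop::27}, which I would invoke as a black box (it extends the classical equivalence of Kostant~\cite{Ko78} to the Takiff setting, following~\cite{X23}). A tempting alternative—exhibiting a Lie algebra automorphism $\phi$ of $\g_\ell$ with $\zeta\circ\phi=\eta$ and twisting modules by $\phi$, in parallel with the classical argument that two regular $\mf n^+$-characters of semisimple $\g$ are $\mathrm{Ad}(H)$-conjugate—does not work in general: already for $\g=\mf{sl}_2$ and $\ell=1$, the non-singular characters determined by $\zeta(e)=1,\,\zeta(e\otimes\theta)=0$ and $\eta(e)=0,\,\eta(e\otimes\theta)=1$ on $\mf n_1^+=\C e\oplus\C(e\otimes\theta)$ lie in distinct orbits under $\mathrm{Ad}(\exp\h_1)$ (and indeed under the obvious automorphisms of $\C[\theta]/(\theta^2)$). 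So the conjugation strategy of the classical theory genuinely fails in the Takiff setting, and the passage through $Z(\g_\ell)\mod$ is essential.
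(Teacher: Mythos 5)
Your proposed proof is circular. Proposition \ref{prop::27}, which you invoke as a black box, is proved in the paper by \emph{first} citing Lemma \ref{lem::200} (the very statement at hand) to reduce from an arbitrary non-singular $\zeta$ to the canonical character $\zeta^e$ of Subsection \ref{sect::43}, and only then applying He's Skryabin-type equivalence (Lemma \ref{thm::21}) for $\zeta^e$ alone. Deducing Lemma \ref{lem::200} from Proposition \ref{prop::27} therefore presupposes the conclusion, and the chain $\mc N_\ell(\zeta)\cong(\text{finite-dimensional } Z(\g_\ell)\text{-modules})\cong\mc N_\ell(\eta)$ is simply not available at this point of the paper's logical development. (A minor additional slip: the Whittaker functor used there takes the plain $\zeta^e$-eigenspace, not the generalized eigenspace you wrote.)

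In fact the paper's own proof of Lemma \ref{lem::200} is precisely the twisting route that you dismiss: it picks root vectors $e_1,\ldots,e_p\in\mf n_\ell^+$ attached to simple roots, normalizes $\eta(e_i)=1$, chooses an automorphism $\phi$ of $\g_\ell$ (extended from an automorphism of $\g$ via $\phi_\ell(x\otimes\theta^i)=\phi(x)\otimes\theta^i$) with $\phi(e_i)=\zeta(e_i)^{-1}e_i$, and transports modules along $\phi$. Your observation that two characters non-singular in the literal sense of Subsection \ref{sect::42} need not be conjugate under automorphisms preserving $\mf n_\ell^+$ --- e.g.\ $\zeta(e)=1,\ \zeta(e\otimes\theta)=0$ versus $\eta(e)=0,\ \eta(e\otimes\theta)=1$ for Takiff $\mf{sl}(2)$ with $\ell=1$ --- is a genuine concern about whether the normalization $\zeta(e_i)\neq 0$ in the paper's twist is always available, and is well worth raising. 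But it cuts against the paper's argument rather than supporting yours: it does not license the appeal to Proposition \ref{prop::27}. To make your route non-circular you would need to establish the Kostant/Skryabin-type equivalence of Proposition \ref{prop::27} directly for \emph{every} non-singular $\zeta$ without first passing through Lemma \ref{lem::200}, a nontrivial step (the finite $W$-algebra associated with a general $\zeta$ is not a priori isomorphic to $Z(\g_\ell)$) that your proposal does not undertake.
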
 
\begin{proof} We note that an automorphism   $\phi: \g\rightarrow \g$ can be extended to an automorphism $\phi_\ell$ of $\g_\ell$ by letting $\phi_\ell(x\otimes \theta^i) = \phi(x)\otimes \theta^i$, for $0\leq i\leq \ell$.  Using this fact, the proof can be completed by an argument which is completely analogous to the one given in the proof of  \cite[Proposition 36]{CC23},  thus we give a sketch of the proof and omit the details. 
	
	Let $\{e_1,e_2,\ldots,e_p\}\subset \mf n_\ell$ be the set of all root vectors associate to simple roots. Without loss of generality, we can assume that $\eta(e_i) =1$, for $1\leq i\leq p$. Then there is an automorphism $\phi$ of $\g_\ell$ such that $\phi(e_i) = \zeta(e_i)^{-1}e_i$. For any $M\in  {\mc N}_\ell(\zeta)$, we define an associated $\g_\ell$-module  $T_{\zeta,\eta}(M)$, that is, $T_{\zeta,\eta}(M)$ has the same underlying subspace as $M$ with the new $\g_\ell$-module action denoted by the star-notation $\ast$, given as follows:
	\begin{align}
	&x\ast m :=\phi(x)m,
	\end{align}  for any $x\in \g_\ell$ and $m\in M$. Since $x-\zeta(x)$ acts locally nilpotently on $M$, for $x\in \mf n_\ell$, it follows that $x-1$ acts on $T_{\zeta,\eta}(M)$ locally nilpotently, for $x\in \mf n_\ell$. By definition, $T_{\zeta,\eta}(M)$ is  finitely generated over  $U(\g_\ell)$ and locally finite over $Z(\g_\ell)$. Consequently, this yields an equivalence $T_{\zeta,\eta}(\_)$  from $\mathcal{N}_\ell(\zeta)$ to $\mathcal{N}_\ell(\eta)$. This completes the proof.
\end{proof}

The following lemma is an analogue of \cite[Proposition 5]{CoM15} for Takiff algebras.
\begin{lem} \label{lem::21} There is an isomorphism of algebras
	$  {Z}(\mathfrak{g}_{\ell}) \otimes  {U}(\mathfrak{n}_{\ell}) \cong  {Z}(\mathfrak{g}_{\ell}) {U}(\mathfrak{n}_{\ell}).$
\end{lem}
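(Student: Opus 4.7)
The strategy is to exhibit the multiplication map
\[
m:Z(\g_\ell)\otimes U(\mf n^+_\ell)\longrightarrow U(\g_\ell),\qquad z\otimes x\mapsto z\cdot x,
\]
as an injective algebra homomorphism with image exactly $Z(\g_\ell)U(\mf n^+_\ell)$. That $m$ is an algebra homomorphism is automatic because $Z(\g_\ell)$ commutes with every element of $U(\g_\ell)$, and in particular with $U(\mf n^+_\ell)$; surjectivity onto $Z(\g_\ell)U(\mf n^+_\ell)$ holds tautologically. Thus the entire content is the injectivity of $m$.

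For injectivity I would use a Harish-Chandra style projection. The PBW theorem gives a vector space decomposition $U(\g_\ell)=U(\mf b_\ell)\oplus \mf n^-_\ell U(\g_\ell)$, and I denote by $\pi:U(\g_\ell)\to U(\mf b_\ell)$ the associated projection. The subspace $\mf n^-_\ell U(\g_\ell)$ is visibly closed under right multiplication by $U(\mf b_\ell)$, so $\pi$ is right $U(\mf b_\ell)$-linear. Next I claim $\pi(Z(\g_\ell))\subseteq U(\h_\ell)$. Indeed, every $z\in Z(\g_\ell)$ commutes with $\h\subset \g_\ell$, hence has $\h$-weight zero as an element of $U(\g_\ell)$; since both summands of $U(\g_\ell)=U(\mf b_\ell)\oplus \mf n^-_\ell U(\g_\ell)$ are $\h$-stable, $\pi(z)$ is likewise weight zero in $U(\mf b_\ell)=U(\h_\ell)\otimes U(\mf n^+_\ell)$. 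Because every non-constant monomial in $U(\mf n^+_\ell)$ has strictly positive $\h$-weight, the weight-zero subspace of $U(\mf b_\ell)$ is exactly $U(\h_\ell)$, establishing the claim. The resulting map $\pi|_{Z(\g_\ell)}:Z(\g_\ell)\to U(\h_\ell)$ is the Harish-Chandra projection for $\g_\ell$; it is injective by the description of $Z(\g_\ell)$ in \cite{CT23, Ch23}, or equivalently by noting that a non-zero central element must act by a non-zero scalar on some Verma module $M(\la)$.

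Now suppose $m\bigl(\sum_i z_i\otimes x_i\bigr)=\sum_i z_ix_i=0$ in $U(\g_\ell)$. Right $U(\mf b_\ell)$-linearity of $\pi$ together with $x_i\in U(\mf n^+_\ell)\subseteq U(\mf b_\ell)$ yields
\[
0=\pi\bigl(\textstyle\sum_i z_ix_i\bigr)=\sum_i \pi(z_i)\,x_i\quad\text{in }U(\mf b_\ell).
\]
Identifying $U(\mf b_\ell)$ with $U(\h_\ell)\otimes U(\mf n^+_\ell)$ via multiplication (a PBW isomorphism) converts this to the tensor equality $\sum_i\pi(z_i)\otimes x_i=0$. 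Injectivity of $\pi|_{Z(\g_\ell)}$, combined with the fact that tensoring with the $\C$-vector space $U(\mf n^+_\ell)$ preserves injectivity (everything is free over $\C$), forces $\sum_i z_i\otimes x_i=0$, finishing the proof.

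The main obstacle is the injectivity of the Harish-Chandra projection $\pi|_{Z(\g_\ell)}:Z(\g_\ell)\to U(\h_\ell)$. In the case $\ell=0$ this is classical Harish-Chandra, but for Takiff algebras it requires the structural description of $Z(\g_\ell)$ developed in \cite{CT23, Ch23}, which is what I would simply invoke rather than reprove here.
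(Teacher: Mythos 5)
Your proof is correct and follows essentially the same route as the paper: reduce to injectivity of the multiplication map, then use the PBW decomposition $U(\g_\ell)=U(\mf b_\ell)\oplus\mf n^-_\ell U(\g_\ell)$ to produce the Harish-Chandra projection $\psi:Z(\g_\ell)\to U(\h_\ell)$ and conclude from its injectivity via the tensor factorization $U(\mf b_\ell)\cong U(\h_\ell)\otimes U(\mf n^+_\ell)$. The one substantive difference is the source of injectivity of $\psi$: the paper invokes Geoffriau \cite[Theorem 4.7]{G95} (which develops the Harish-Chandra homomorphism precisely for these generalized Takiff algebras), whereas you cite \cite{CT23,Ch23} and offer a Verma-module argument as an alternative. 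That alternative, as stated ("a non-zero central element must act by a non-zero scalar on some Verma module"), is essentially a reformulation of the injectivity of $\psi$ rather than an independent proof of it, so if you wanted to avoid citation you would still need an argument that the Verma-module annihilators separate $Z(\g_\ell)$; Geoffriau's reference is the cleanest place to get this for Takiff algebras.
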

\begin{proof}
	Let  
	$\phi:Z(\g_\ell)\otimes {U}(\mf {n}_\ell) \rightarrow \  Z(\g_\ell)U(\mf n_\ell)$ be the  canonical multiplication map, sending elements $\sum_{i=1}^{k}z_i\otimes u_i\in Z(\g_\ell)\otimes U(\mf n_\ell)$ to $\sum_{i=1}^{k}z_i u_i.$ It remains to  show that $\phi$ is injective. 
  
    For each $z\in Z(\g_\ell)$, note that $z=h_z+\sum_{i=1}^px^-_ih_ix^+_i,$ for some $h_z, h_1,\ldots, h_p\in U(\h_\ell)$ and $x_i^\pm \in {\mf n}^\pm_\ell U(\mf n^\pm_\ell)$.   
   Recall  the following homomorphism  from Subsection \ref{sect::252}: $$\psi=\psi_{\h_\ell}:\ Z(\g_\ell)\rightarrow  {U}(\mathfrak{h}_\ell),~z\mapsto h_z, \text{for }z\in Z(\g_\ell).$$      Now, let $\sum\limits_{i=1}^kz_i\otimes u_i\in {Z}(\mathfrak{g}_{\ell})\otimes {U}(\mathfrak{n}_{\ell})$ lie in the kernel of $\phi$ and $z_i's$ are linearly independent.
   By the PBW theorem, we have 
	$\sum\limits_{i=1}^k\psi(z_i)u_i=0\quad\text{in } {U}(\mathfrak{g_{\ell}}),$
	Since $\big\{\psi(z_i)\big\}_{i=1}^k$ are linearly independent, it follows that all  $\psi(z_i)u_i=0$, for all $i=1,\ldots k$, which implies that $u_i=0$ for all $i=1,\ldots,k$. This completes the proof.
\end{proof}

For characters $\chi:{Z}(\g_\ell)\rightarrow\mathbb{C}$ and $\zeta:\mathfrak{n}_{\ell}\rightarrow\C$, we define a one-dimensional  ${Z}(\mathfrak{g}_{\ell}){U}(\mathfrak{n}_{\ell})$-module $\mathbb{C}_{\chi,\zeta}$ by 
$$\big(x-\chi(x)\big)\mathbb{C}_{\chi,\zeta}=0=\big(y-\zeta(y)\big)\mathbb{C}_{\chi,\zeta},\ \text{for all } x\in{Z}(\mathfrak{g}_{\ell}), \ y\in{U}(\mathfrak{n}_{\ell}).$$

As a consequence of Lemma \ref{lem::21}, we provide the following corollary, which presents a complete classification of simple modules over $Z(\g_\ell)U(\mf n_\ell)$.
\begin{cor} \label{cor::24}
The set $\big\{\mathbb{C}_{\chi,\zeta}|\ \chi: {Z}(\mathfrak{g_\ell})\rightarrow\mathbb{C}\text{ and } \zeta:  \mathfrak{n}_{\ell}\rightarrow \C \text{ are  characters}\}$ is a complete 
set of representatives of the isomorphism classes of simple modules over  ${Z}(\mathfrak{g}_{\ell}){U}(\mathfrak{n}_{\ell}).$
\end{cor}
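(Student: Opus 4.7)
The plan is to apply Lemma~\ref{lem::21}, which identifies $Z(\g_\ell)U(\mf n_\ell^+)$ with the tensor product algebra $Z(\g_\ell) \otimes U(\mf n_\ell^+)$, and then classify simple modules of this tensor product via a Schur--Dixmier argument. Under the identification, each $\C_{\chi,\zeta}$ corresponds to the outer tensor product $\C_\chi \boxtimes \C_\zeta$ of one-dimensional modules; these are manifestly simple, and distinct pairs $(\chi,\zeta)$ give non-isomorphic modules since restricting the action to each tensor factor recovers $\chi$ (respectively, $\zeta$).

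For the exhaustion direction, let $V$ be a simple $Z(\g_\ell)U(\mf n_\ell^+)$-module. Since $U(\g_\ell)$, and hence its subalgebra, has countable dimension over $\C$, Dixmier's version of Schur's lemma yields $\End(V) = \C$. The factor $Z(\g_\ell) \otimes 1$ sits in the center of the tensor algebra and therefore acts on $V$ by scalars, producing a character $\chi\colon Z(\g_\ell) \to \C$. Consequently $V$ reduces to a simple module for the quotient $\bigl(Z(\g_\ell)/\ker\chi\bigr)\otimes U(\mf n_\ell^+)\cong U(\mf n_\ell^+)$.

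The main obstacle will be the remaining step: showing that such a simple module $V$ is forced to be one-dimensional, so as to extract a character $\zeta\colon \mf n_\ell^+ \to \C$ with $V \cong \C_\zeta$. The strategy is to induct on the nilpotence class of $\mf n_\ell^+$. The last nonzero term $\mf z$ of the lower central series of $\mf n_\ell^+$ is central, hence acts on $V$ by scalars through a further application of Dixmier's lemma, defining $\zeta$ on $\mf z$; after twisting away these scalars the $\mf z$-action becomes trivial on $V$, so $V$ descends to a simple module over an enveloping algebra of strictly smaller nilpotence class, and the inductive hypothesis closes the argument and produces $\zeta$ on all of $\mf n_\ell^+$. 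This mirrors the classical Kostant-type classification used in the non-Takiff setting, and the argument should transfer to the Takiff algebras mutatis mutandis.
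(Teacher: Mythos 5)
Your overall plan is sound in its first two steps: Lemma~\ref{lem::21} does identify $Z(\g_\ell)U(\mf n_\ell^+)$ with the tensor product $Z(\g_\ell)\otimes U(\mf n_\ell^+)$, the modules $\C_{\chi,\zeta}$ are visibly simple and pairwise non-isomorphic, and since $Z(\g_\ell)$ is central in $Z(\g_\ell)U(\mf n_\ell^+)$ it must act on any simple module by a character (Dixmier--Schur). The genuine gap is in the final step, where you try to conclude that any simple $U(\mf n_\ell^+)$-module is one-dimensional by inducting on the nilpotence class and ``twisting away'' the scalar action of the last nonzero term $\mf z$ of the lower central series. That twist is not available: a character of $\mf n_\ell^+$ must vanish on $[\mf n_\ell^+,\mf n_\ell^+]$, and $\mf z\subseteq[\mf n_\ell^+,\mf n_\ell^+]$ whenever $\mf n_\ell^+$ is nonabelian, so there is no character of $\mf n_\ell^+$ whose restriction to $\mf z$ can cancel a nonzero scalar. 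More seriously, the claim you want to establish is in fact \emph{false} for arbitrary simple modules: whenever $\mf n^+$ is nonabelian (already for $\gl(3)$, where $\mf n^+$ is the Heisenberg algebra), $U(\mf n_\ell^+)$ admits infinite-dimensional simple modules. Concretely, $U(\mf n^+)$ surjects onto the Weyl algebra $A_1$ by sending the central element of the Heisenberg triple to $1$, and $\C[t]$ is a simple $A_1$-module; inflating through $\mf n_\ell^+\twoheadrightarrow\mf n^+$ yields an infinite-dimensional simple $U(\mf n_\ell^+)$-module, and tensoring with a one-dimensional $Z(\g_\ell)$-module gives a simple $Z(\g_\ell)U(\mf n_\ell^+)$-module that is not of the form $\C_{\chi,\zeta}$.

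The statement the argument is meant to support is the one actually used in the paper, namely about \emph{finite-dimensional} simple modules over $Z(\g_\ell)U(\mf n_\ell^+)$ (see how it is invoked in the proofs of Corollary~\ref{cor::27} and Proposition~\ref{prop::27}, both of which concern finite-dimensional $Z(\g_\ell)U(\mf n_\ell^+)$-modules). In the finite-dimensional setting your induction can be salvaged, but the mechanism is not twisting: since $\mf z\subseteq[\mf n_\ell^+,\mf n_\ell^+]$, every element of $\mf z$ is a sum of commutators, hence has trace zero on a finite-dimensional module, so the scalar by which $\mf z$ acts is automatically zero and one may descend to $\mf n_\ell^+/\mf z$. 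Equivalently, and more directly, once $Z(\g_\ell)$ has been killed you are left with a finite-dimensional simple module over the solvable (indeed nilpotent) Lie algebra $\mf n_\ell^+$, and Lie's theorem forces it to be one-dimensional. You should replace the twisting step by this trace/Lie's-theorem argument and make the finite-dimensionality hypothesis explicit.
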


\begin{cor} \label{cor::27}	
Let $\zeta: \mf n_\ell\rightarrow \C$ be an arbitrary character. Then the category $\mc N_\ell(\zeta)$ is a Serre subcategory of $\g_\ell$$\mod$.
\end{cor}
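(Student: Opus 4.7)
The plan is to verify that $\mc N_\ell(\zeta)$ is closed under subobjects, quotients, and extensions inside $\g_\ell\mod$. Recall that a module $M \in \g_\ell\mod$ lies in $\mc N_\ell(\zeta)$ precisely when (i) $M$ is finitely generated over $U(\g_\ell)$, (ii) $Z(\g_\ell)$ acts locally finitely on $M$, and (iii) every element $x - \zeta(x)$ with $x \in \mf n_\ell^+$ acts locally nilpotently on $M$. I would verify each of these three conditions is preserved under the three operations, organized by operation rather than by condition.

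For extensions and quotients, all three conditions are standard. If $0 \to M' \to M \to M'' \to 0$ is exact with $M', M'' \in \mc N_\ell(\zeta)$, then finite generation of $M$ follows from finite generation of $M'$ and $M''$ (lifting generators of $M''$ and appending those of $M'$); local finiteness under $Z(\g_\ell)$ and local nilpotence of $x-\zeta(x)$ for $x \in \mf n_\ell^+$ are both conditions of the form ``every vector generates a finite-dimensional (resp.\ eventually annihilated) subspace under a fixed operator,'' which pass to subquotients and extensions by a standard two-line argument. Conversely, if $M \in \mc N_\ell(\zeta)$, the quotient $M''$ inherits all three conditions immediately from $M$.

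The one point that requires a specific input is closure under subobjects, namely showing that a submodule $M' \subseteq M$ of a finitely generated $M$ is itself finitely generated. For this I would invoke the fact that $U(\g_\ell)$ is left and right Noetherian: indeed, $\g_\ell$ is a finite-dimensional Lie algebra, so by the standard result on enveloping algebras of finite-dimensional Lie algebras $U(\g_\ell)$ is Noetherian, which means every submodule of a finitely generated $U(\g_\ell)$-module is finitely generated. Local finiteness under $Z(\g_\ell)$ and local nilpotence of $x-\zeta(x)$ obviously restrict from $M$ to $M'$, since the required vector-by-vector finiteness conditions are inherited by subspaces.

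I do not expect a genuine obstacle here: the statement is a formal consequence of (a) the Noetherianity of $U(\g_\ell)$ for subobject closure, and (b) the standard fact that local finiteness and local nilpotence conditions for a fixed family of operators define Serre subcategories. The only thing worth being careful about is not to conflate the two local conditions ($Z(\g_\ell)$-local finiteness versus $\mf n_\ell^+$-local $\zeta$-nilpotence), but both are handled by the same elementary observation on subquotients and extensions.
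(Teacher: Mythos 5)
Your proof is correct and rests on the same essential inputs as the paper's, but is organized differently. The paper sets $R := Z(\g_\ell)U(\mf n_\ell^+)$, invokes Lemma~\ref{lem::21} and Corollary~\ref{cor::24} to record that $R$ is a finitely generated subalgebra of $U(\g_\ell)$ whose simple modules are all one-dimensional, and then appeals to a ``standard argument''; you instead check the three defining conditions of $\mc N_\ell(\zeta)$ against subobjects, quotients and extensions directly, using Noetherianity of $U(\g_\ell)$ for the subobject case. One small imprecision is worth flagging: you describe the local $Z(\g_\ell)$-finiteness condition and the local $\zeta$-nilpotence condition as both being about ``a fixed operator,'' and hence as passing to extensions by a two-line argument. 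That phrasing is literally accurate for a single operator $x-\zeta(x)$ with $x\in\mf n_\ell^+$, but $Z(\g_\ell)$ is a whole algebra rather than one operator, and closure of local $Z(\g_\ell)$-finiteness under extensions genuinely uses that $Z(\g_\ell)$ is Noetherian: for $m\in M$ with image $\bar m\in M''$, the cyclic module $Z(\g_\ell)m$ has finite-dimensional image $Z(\g_\ell)\bar m$, so to conclude that $Z(\g_\ell)m$ itself is finite-dimensional one needs the submodule $Z(\g_\ell)m\cap M'$ to be finitely generated over $Z(\g_\ell)$, which is exactly where Noetherianity enters. This holds because $Z(\g_\ell)$ is a polynomial algebra, a fact the paper draws on elsewhere (via \cite{He18} and \cite{RT92}); the paper's reduction to the finitely generated algebra $R$ bundles this input implicitly. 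With that clarification your argument is complete.
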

\begin{proof} Let $R:=Z(\g_\ell)U(\mf n_\ell)$. 
By Lemma \ref{lem::21} and Corollary \ref{cor::24}, $R$ is a finitely generated subalgebra of $U(\g_\ell)$ such that all simple $R$-modules are one-dimensional. The conclusion follows by a standard argument. 
\end{proof}

	\subsection{Finite $W$-algebras associated with  Takiff  algebras} \label{sect::43}
	
	Finite $W$-algebras are 
	certain associative algebras, which can be realized through the Zhu algebras of the affine $W$-algebras constructed from pairs $(\g, e)$, where $\g$ is a complex semisimple Lie algebra and $e\in \g$ is a nilpotent element. Recently,  the  representation theory of finite $W$-algebras associated with reductive Lie algebras has received a great amount of attention, since Premet’s work \cite{Pr02}; see also  \cite{GG02,BGK08,Lo10, Lo11, Lo12, Pr07, Pr07Mo, Wa11} and reference therein. However, at present, analogous finite $W$-algebras for more general Takiff algebras have received less attention.

	Recently, in  \cite{He18, He22} He studied the finite $W$-algebra associated with generalized Takiff algebras $\g_\ell$, for reductive Lie algebras $\g$.	
	
	\subsubsection{} Let $e\in \g$ be a nilpotent element inside an $\mf{sl}(2)$-triple  $\langle e,h,f\rangle \subseteq \g$. Then the adjoint action of $h$ on $\g$ defines a  {\em Dynkin $\Z$-grading}
\begin{align}
&\g = \bigoplus_{k\in \Z} \g(k), \text{ where } \g(k):= \{x\in \g_\ell|~[h,x] = kx\}. \label{eq::D1} 
\end{align}	 
	This Dynkin grading is a good grading for $e$ in the sense \cite{EK05}.  By \mbox{\cite[Lemma 2.2.1]{He18}}, \eqref{eq::D1} induces a good $\Z$-grading on $\g_\ell$ as follows:
	\begin{align}
		&\g_\ell = \bigoplus_{k\in \Z} \g_\ell(k), \text{ where } \g_\ell(k):= \{x\in \g_\ell|~[h,x] = kx\} =\bigoplus_{i=0}^\ell \g(k)\otimes \theta^i. \label{eq::D2} 
	\end{align}

The nilpotent element $e$ is called  {\em principal}  in $\g$ provided that the kernel of the adjoint action of $e$ on $\g$ has minimal dimension, which is equal to the rank of $\g$. 
From now on, we choose a principal nilpotent element $e$ such that the grading \eqref{eq::D1} is even (i.e., $\g(2k+1)=0$, for all $k\in \Z$) and the nilpotent subalgebra $\bigoplus_{k\leq -2} \g(k)$ from \eqref{eq::D1} coincides with the even nilradical $\mf n$ from Subsection \ref{sect::2.1}. 
Fix a non-degenerate, invariant, symmetric, and bilinear form $(\_|\_): \g\times \g\rightarrow \C$ on $\g$. 	By  \mbox{\cite[Lemma 2.1.3]{He18}}, this induces a  non-degenerate, invariant, symmetric, and bilinear form $(\_|\_)_\ell$ on $\g_\ell$ determined by $$(\sum_{i=0}^\ell x_i\otimes\theta^i| \sum_{j=0}^\ell y_j\otimes\theta^j)_\ell =\sum_{k=0}^\ell \sum_{i+j=k}^\ell(x_i|y_j),\text{ for all }x_0,\ldots,x_\ell,y_1,\ldots,y_\ell\in \g.$$

Let   $\zeta^e(\_):\mf n_\ell \rightarrow \C$ be a linear map given by  \begin{align}
&\zeta^e(\sum_{j=0}^\ell y_j\otimes\theta^j) = (e|\sum_{j=0}^\ell y_j\otimes\theta^j)_\ell =\sum_{j=0}^\ell(e|y_j), \label{eq::chidef}
\end{align} for $y_0, y_1, \ldots, y_\ell\in \g$. By  \cite[Lemma 2.2.5]{He18} (see also \cite{Cas11}), $\zeta^e$ defines a non-singular character on $\mf n_\ell$.   Following \cite[Section 2.2]{He18}, we define the corresponding  finite $W$-algebra $U(\g_\ell,e)$ as follows:
\begin{align}
	&U(\g_\ell,e):=\End_{U(\g_\ell)}(Q_{\zeta^e})^{\text{op}},\label{def::generalW}
\end{align} where $Q_{\zeta^e}:=U(\g_\ell)/I_{\zeta^e}$ is the Gelfand-Graev type module with the left ideal $I_{\zeta^e}$ of $U(\g_\ell)$   generated by elements $x-\zeta^e(x)$, for $x\in \mf n_\ell$.  Let $\pr: U(\g_\ell)\rightarrow U(\g_\ell)/I_{\zeta^e}$ denote the natural projection. As in the classical case,  $U(\g_\ell, e)$ can be identified with the following associative algebra:
\begin{align}
&\{\pr(y)\in Q_\zeta|~[x,y]\in I_{\zeta^e}, \text{for all }x\in \mf{n}_\ell\},
\end{align}  with the multiplication given by $\pr(y_1)\pr(y_2) = \pr(y_1y_2),$ for any $y_1, y_2\in U(\g_\ell)$ such that $[x,y_1], [x,y_2]\in I_{\zeta^e}$ for all $x\in \mf{n}_\ell$. By \cite[Theorem 2.4.2]{He18}, $\pr(\_): U(\g_\ell,e)\rightarrow Z(\g_\ell)$ is an isomorphism, and therefore they are isomorphic to a polynomial algebra by \cite{RT92}.

Let $\g_\ell$-Wmod$^{\zeta^e}$  denote the category of all  $\g_\ell$-modules on which $x-\zeta^e(x)$ acts locally nilpotently, for all $x\in\mf n_\ell.$   We note that the category $\mc N_\ell(\zeta^e)$ of Whittaker modules associated to $\zeta^e$ is a full subcategory of  $\g_\ell$-Wmod$^{\zeta^e}$. 
 Also, let $U(\g_\ell, e)\Mod$  denote the category of all $U(\g_\ell, e)$-modules.
 For any $M\in  \g_\ell\text{-Wmod}^{\zeta^e}$, we define
\begin{align}
	&\text{Wh}_{\zeta^e}(M):=\{m\in M|~xm=\zeta^e(x)m,~\text{for any }x\in \mf n_\ell\}.
\end{align}
The elements of $\text{Wh}_{\zeta^e}(M)$ are called {\em Whittaker vectors} of $M$. By a standard argument (see also \cite[Lemma 2.4.7]{He18}), there are the following well-defined functors:
\begin{align}
&\text{Wh}_{\zeta^e}(\_):~ \g_\ell\text{-Wmod}^{\zeta^e} \rightarrow  U(\g_\ell, e)\Mod,~M\mapsto \text{Wh}_{\zeta^e}(M),\label{eq::Whfun} \\
&Q_{\zeta^e}\tens{U(\g_\ell,e)}(\_):U(\g_\ell, e)\Mod  \rightarrow \g_\ell\text{-Wmod}^{\zeta^e},~V\mapsto Q_{\zeta^e}\tens{U(\g_\ell,e)}V.
\end{align}
We refer to $\text{Wh}_{\zeta^e}(\_)$ from \eqref{eq::Whfun} as the {\em Whittaker functor} corresponding to $\zeta^e$.

The following result, proved by He   \cite[Theorem 2.4.8]{He18}, is an analogue of the  Skryabin type equivalence  \cite{Skr02} for the finite $W$-algebras associated with  Takiff algebras; see also \cite{He22}.
\begin{lem}[He] \label{thm::21}
	We have mutually inverse equivalences   $\emph{Wh}_{\zeta^e}(\_)$  and $Q_{\zeta^e}\otimes_{  W_{\zeta^e}}(\_)$ between  $\g_\ell\emph{-Wmod}^{\zeta^e}$ and  $U(\g_\ell, e)\Mod$.
\end{lem}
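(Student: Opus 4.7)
The plan is to adapt Skryabin's classical proof \cite{Skr02} of the Whittaker equivalence to the Takiff setting, using the good $\Z$-grading on $\g_\ell$ from \eqref{eq::D2} as the essential input. The overall strategy has three ingredients: a Kazhdan-type PBW decomposition of the Gelfand--Graev module $Q_{\zeta^e}$, the identification of $Q_{\zeta^e}$ as a projective generator of $\g_\ell$-Wmod$^{\zeta^e}$ with endomorphism ring $U(\g_\ell,e)^{\mathrm{op}}$, and an application of the Morita theorem.

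First I would install the Kazhdan filtration on $U(\g_\ell)$ with respect to the good grading, assigning $x \in \g_\ell(k)$ the degree $k + 2$. Combining the PBW theorem for $U(\g_\ell)$ with the observation that $\zeta^e$ vanishes on $\mf n^+_\ell \cap \g_\ell(k)$ for $k < -2$ (since $e \in \g(2)$), one obtains a linear isomorphism
\begin{equation*}
Q_{\zeta^e} \;\cong\; U\bigl(\textstyle\bigoplus_{k \leq 0} \g_\ell(k)\bigr) \otimes \C_{\zeta^e},
\end{equation*}
which, after passing to the associated graded, becomes an isomorphism of graded free modules over a polynomial algebra. Comparing with the identification $U(\g_\ell,e) \cong Z(\g_\ell)$ recalled after \eqref{def::generalW}, and using the fact that $Z(\g_\ell)$ is itself a polynomial algebra by Rais--Tauvel \cite{RT92}, this shows that $Q_{\zeta^e}$ is free as a right $U(\g_\ell,e)$-module.

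Next I would exploit this freeness to verify the two Skryabin-type properties. Exactness of $\text{Wh}_{\zeta^e}(\_) \cong \Hom_{U(\g_\ell)}(Q_{\zeta^e},\_)$ on $\g_\ell$-Wmod$^{\zeta^e}$ follows from projectivity of $Q_{\zeta^e}$ in this category; faithfulness reduces to showing that every nonzero $M \in \g_\ell$-Wmod$^{\zeta^e}$ admits a nonzero Whittaker vector, which is a standard Kazhdan-grading argument applied to a cyclic submodule of $M$. Combined with the tensor--hom adjunction, these two properties force both unit and counit of the adjoint pair $\bigl(Q_{\zeta^e} \otimes_{U(\g_\ell,e)}(\_),\ \text{Wh}_{\zeta^e}(\_)\bigr)$ to be isomorphisms, giving the desired equivalence.

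The main technical obstacle lies in establishing the free bimodule structure of $Q_{\zeta^e}$ in the Takiff setting. Unlike the semisimple case, $\g_\ell$ carries a large solvable radical $\bigoplus_{i \geq 1} \g \otimes \theta^i$, so one must verify directly that the form $(\_|\_)_\ell$ induces nondegenerate pairings between $\g_\ell(k)$ and $\g_\ell(-k)$ for each $k$ and that the Kazhdan-graded version of the PBW decomposition behaves polynomially in the right variables. Once this linear-algebra compatibility is in hand, the remainder of the proof is formal and follows Skryabin's template verbatim.
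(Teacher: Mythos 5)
The paper does not prove this lemma itself --- it is stated as He's result and cited from \cite[Theorem 2.4.8]{He18} --- so there is no in-paper argument to compare against. Your sketch is the standard Skryabin template \cite{Skr02} transported to the Takiff setting, which is indeed how He proceeds, so the overall strategy is the right one.

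There is, however, a circularity at the hinge of your outline. You derive exactness of $\text{Wh}_{\zeta^e}(\_)=\Hom_{U(\g_\ell)}(Q_{\zeta^e},\_)$ from ``projectivity of $Q_{\zeta^e}$ in $\g_\ell$-Wmod$^{\zeta^e}$,'' but that projectivity is not available up front: for a general object $M$ of the Whittaker category one must actually prove that the counit $Q_{\zeta^e}\otimes_{U(\g_\ell,e)}\text{Wh}_{\zeta^e}(M)\to M$ is an isomorphism, and both exactness of $\text{Wh}_{\zeta^e}$ and projectivity of $Q_{\zeta^e}$ are \emph{consequences} of the equivalence rather than shortcuts to it. In Skryabin's (and He's) argument the real engine is a double induction over the Kazhdan filtration: the PBW freeness of $Q_{\zeta^e}$ over $U(\g_\ell,e)$ that you establish gives the unit isomorphism, while the counit isomorphism is obtained by peeling off Whittaker vectors in order of Kazhdan degree. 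That filtration argument does carry over to Takiff algebras precisely because the good grading \eqref{eq::D2} is compatible with the form $(\_|\_)_\ell$, as you correctly flag at the end; this nondegeneracy between $\g_\ell(k)$ and $\g_\ell(-k)$ is the genuinely new verification needed and is supplied by \cite[Lemma 2.1.3]{He18}. One further discrepancy with the paper's conventions: here $\mf n^+=\bigoplus_{k\leq -2}\g(k)$ and $e\in\g(2)$, so the PBW complement is $\bigoplus_{k\geq 0}\g_\ell(k)$, not $\bigoplus_{k\leq 0}\g_\ell(k)$ as you wrote.
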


\label{rem::22}Let $\zeta(\_): \mf n_\ell\rightarrow \C$ be an arbitrary non-singular character. 
  Recall the one-dimensional simple $Z(\g_\ell)U(\mf n_\ell)$-module $\C_{\chi,\zeta}$ from Subsection \ref{sect::42}, where   $\chi:Z(\g_\ell)\rightarrow\C$ is a  central character. 
  We define  the following $\g_\ell$-module 
$$L(\chi,\zeta) := \text{Ind}_{{Z}(\mathfrak{g}_{\ell}) {U}(\mathfrak{n}_{\ell})}^{{U}(\mathfrak{g}_{\ell})}\C_{\chi,\zeta} = {U}(\mathfrak{g}_{\ell})\tens{Z(\g_\ell)U(\mf n_\ell)}\C_{\chi,\zeta}.$$ 

Furthermore, we define the category   $\g_\ell$-Wmod$^{\zeta}$ of $\g_\ell$-modules in the same fashion.
We may note that the equivalence $T_{\zeta,\zeta^e}$ from Lemma \ref{lem::200} extends to an equivalence between $\g_\ell\text{-Wmod}^{\zeta^e}$ and $\g_\ell\text{-Wmod}^{\zeta}$. Consequently, we have  $\g_\ell\text{-Wmod}^{\zeta}\cong U(\g_\ell, e)\Mod$.  This observation yields the following consequence, which has been obtained by Xia \cite[Theorem 5.7]{X23}; see also He \cite[Corollary 4.7]{He22}, where the special case that $\zeta=\zeta^e$ has been studied. Here, we provide an alternative proof below.

\begin{lem}[He, Xia] \label{cor::26}
	The set $$\{L(\chi,\zeta)|~\chi:Z(\g_\ell)\rightarrow \C \text{ is a character}\}$$ is a complete set of representatives of the isomorphism classes of simple objects in $\mc N_\ell(\zeta)$. 
\end{lem}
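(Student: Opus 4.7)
The plan is to combine the Skryabin-type equivalence of Lemma \ref{thm::21} (due to He) with the isomorphism $\pr: U(\g_\ell,e) \xrightarrow{\sim} Z(\g_\ell)$ and the fact (due to Ra\"is and Tauvel) that $Z(\g_\ell)$ is a polynomial algebra, thereby reducing the classification to simple modules over a commutative polynomial algebra, which are precisely the characters.

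First I would reduce to the principal case $\zeta = \zeta^e$. By Lemma \ref{lem::200}, the twisting functor $T_{\zeta^e,\zeta}$ associated to an automorphism $\phi$ of $\g_\ell$ yields an equivalence $\mc N_\ell(\zeta^e) \xrightarrow{\cong} \mc N_\ell(\zeta)$. A direct check from the definitions shows that $T_{\zeta^e,\zeta}$ sends each $L(\chi,\zeta^e)$ to $L(\chi\circ\phi^{-1},\zeta)$, and since $\phi$ preserves $Z(\g_\ell)$ the pull-back $\chi \mapsto \chi\circ\phi^{-1}$ is a bijection on characters of $Z(\g_\ell)$. Hence it suffices to prove the statement for $\zeta = \zeta^e$.

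Next, in the principal case I would invoke Lemma \ref{thm::21}: a module $M \in \g_\ell\text{-Wmod}^{\zeta^e}$ lies in $\mc N_\ell(\zeta^e)$ exactly when $\text{Wh}_{\zeta^e}(M)$ is both finitely generated and locally finite over $U(\g_\ell,e)$. Since $U(\g_\ell,e) \cong Z(\g_\ell)$ is a commutative polynomial algebra, such a module is necessarily finite-dimensional (a finitely generated, locally finite module over a commutative finitely generated algebra). The simple objects in this subcategory are therefore the one-dimensional characters $\chi: U(\g_\ell,e) \to \C$, which via $\pr$ correspond bijectively to characters of $Z(\g_\ell)$. Under the inverse Skryabin functor $Q_{\zeta^e}\otimes_{U(\g_\ell,e)}(\_)$, the simple in $\mc N_\ell(\zeta^e)$ associated with $\chi$ is $Q_{\zeta^e}\otimes_{U(\g_\ell,e)} \C_\chi$, and distinctness for different $\chi$ is automatic from the equivalence.

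The main technical step, which I expect to be the chief obstacle, is the identification $Q_{\zeta^e}\otimes_{U(\g_\ell,e)} \C_\chi \cong L(\chi,\zeta^e)$. Writing $Q_{\zeta^e} = U(\g_\ell)/I_{\zeta^e}$ with $I_{\zeta^e}$ the left ideal generated by $\{x-\zeta^e(x): x\in\mf n^+_\ell\}$, and using that the right action of $U(\g_\ell,e)$ on $Q_{\zeta^e}$ factors through left multiplication by elements of $Z(\g_\ell)$ via $\pr$ (central elements commute with everything), the tensor product is computed as $U(\g_\ell)/J$, where $J$ is the left ideal generated by $I_{\zeta^e}$ together with $\{z-\chi(z): z\in Z(\g_\ell)\}$. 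By Lemma \ref{lem::21} this coincides with the defining left ideal of $U(\g_\ell)\otimes_{Z(\g_\ell)U(\mf n^+_\ell)}\C_{\chi,\zeta^e} = L(\chi,\zeta^e)$, completing the identification and hence the proof.
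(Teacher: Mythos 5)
Your argument is correct and follows essentially the same route as the paper: reduce to $\zeta=\zeta^e$ via the twisting equivalence of Lemma \ref{lem::200}, apply He's Skryabin-type equivalence (Lemma \ref{thm::21}) together with $U(\g_\ell,e)\cong Z(\g_\ell)$ to classify simples by characters of $Z(\g_\ell)$, and identify $Q_{\zeta^e}\otimes_{U(\g_\ell,e)}\C_\chi$ with $L(\chi,\zeta^e)$ via the ideal computation. One minor remark: the automorphism $\phi$ used in Lemma \ref{lem::200} is a diagonal rescaling, hence fixes $Z(\g_\ell)$ pointwise, so the twist $\chi\mapsto\chi\circ\phi^{-1}$ you introduce is actually the identity on central characters — this is why the paper writes $T_{\zeta,\zeta^e}(L(\chi,\zeta))\cong L(\chi,\zeta^e)$ with the same $\chi$.
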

\begin{proof}
	First, we assume that $\zeta = \zeta^e$.  Theorem \ref{thm::21}, combined with the fact that   $U(\g_\ell,e)\cong Z(\g_\ell)$ (\cite[Theorem 2.4.2]{He18}), has the consequence that the following set 
	\begin{align*}
		&\{Q_{\zeta^e}\tens{Z(\g_\ell)} \C_{\chi}|~\chi: Z(\g_\ell) \rightarrow \C\text{ is a central character}.\}  
	\end{align*} 	is a complete and irredundant set of representatives of isomorphism classes of   $\mc N_\ell(\zeta^e)$.   We may observe that $L(\chi,\zeta^e)\cong Q_{\zeta^e}\tens{Z(\g_\ell)} \C_{\chi}$, for any central character $\chi$ of $\g_\ell$.  Since $T_{\zeta,\zeta^e}(L(\chi,\zeta)) \cong L(\chi,\zeta^e)$, the conclusion follows.
\end{proof}
 
 The following is a Takiff  analogue of Kostant type equivalence  \cite{Ko78}.
 \begin{prop} \label{prop::27}  Let $\zeta:\mf n_\ell\rightarrow \C$ be a non-singular character. Then 
  $\mc N_\ell(\zeta)$ is equivalent to  the category of finite-dimensional $Z(\g_\ell)$-modules.
 \end{prop}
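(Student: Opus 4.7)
The plan is to reduce to the case $\zeta=\zeta^e$ and then combine He's Skryabin-type equivalence (Theorem \ref{thm::21}) with the identification $U(\g_\ell,e)\cong Z(\g_\ell)$ to transport everything to $Z(\g_\ell)\Mod$; the content of the proposition is then to identify the full subcategory $\mc N_\ell(\zeta)$ inside $\g_\ell\text{-Wmod}^{\zeta}$ as the full subcategory of finite-dimensional $Z(\g_\ell)$-modules.

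First, I would invoke Lemma \ref{lem::200} to replace $\zeta$ by $\zeta^e$: the equivalence $T_{\zeta,\zeta^e}$ constructed there preserves finite generation over $\g_\ell$ and (being an algebra automorphism of $\g_\ell$ fixing $Z(\g_\ell)$) preserves $Z(\g_\ell)$-local finiteness, hence restricts to an equivalence $\mc N_\ell(\zeta)\cong \mc N_\ell(\zeta^e)$. Then, by Theorem \ref{thm::21} together with the isomorphism $\pr:U(\g_\ell,e)\xrightarrow{\cong} Z(\g_\ell)$, the Whittaker functor $\mathrm{Wh}_{\zeta^e}$ gives an equivalence $\g_\ell\text{-Wmod}^{\zeta^e}\xrightarrow{\cong}Z(\g_\ell)\Mod$ with quasi-inverse $Q_{\zeta^e}\otimes_{Z(\g_\ell)}(-)$. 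The crucial point, which I would verify from the construction of $\pr$, is that the $U(\g_\ell,e)$-action on $\mathrm{Wh}_{\zeta^e}(M)$ coincides via $\pr$ with the $Z(\g_\ell)$-action inherited from the ambient $\g_\ell$-action on $M$, so that ``$Z(\g_\ell)$ acts locally finitely on $M$'' translates to ``$Z(\g_\ell)$ acts locally finitely on $\mathrm{Wh}_{\zeta^e}(M)$''.

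With these identifications in place, the proposition reduces to the claim: under the Skryabin equivalence, $M$ is finitely generated over $\g_\ell$ and $Z(\g_\ell)$-locally finite if and only if $\mathrm{Wh}_{\zeta^e}(M)$ is finite-dimensional over $\C$. For the ``only if'' direction, any equivalence preserves finite generation, so $\mathrm{Wh}_{\zeta^e}(M)$ is a finitely generated $Z(\g_\ell)$-module; combined with local finiteness, a finitely generated and locally finite module over the commutative Noetherian algebra $Z(\g_\ell)$ is necessarily finite-dimensional. For the ``if'' direction, given a finite-dimensional $V\in Z(\g_\ell)\Mod$, the module $Q_{\zeta^e}\otimes_{Z(\g_\ell)}V$ is generated as a $\g_\ell$-module by the finite-dimensional subspace $1\otimes V$, hence finitely generated; the $Z(\g_\ell)$-action factors through the finite-dimensional quotient of $Z(\g_\ell)$ acting faithfully on $V$, giving $Z(\g_\ell)$-local finiteness.

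The main obstacle is the bookkeeping in step two: one must be sure that the isomorphism $U(\g_\ell,e)\cong Z(\g_\ell)$ from \cite[Theorem 2.4.2]{He18} really intertwines the two $Z(\g_\ell)$-actions (the intrinsic one on $M\in\g_\ell\text{-Wmod}^{\zeta^e}$ coming from $Z(\g_\ell)\subset U(\g_\ell)$, and the one on $\mathrm{Wh}_{\zeta^e}(M)$ coming from its structure as a $U(\g_\ell,e)$-module), for otherwise the translation of local finiteness is not automatic. Once this matching is confirmed, the remaining arguments are formal properties of the equivalence and elementary commutative algebra, so no further technical difficulties arise.
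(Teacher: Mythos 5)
Your argument is correct, but it takes a genuinely different route from the paper's. The paper reduces to $\zeta=\zeta^e$ (as you do) and then shows directly, by induction on the dimension of a generating finite-dimensional $Z(\g_\ell)U(\mf n_\ell^+)$-submodule, that every object of $\mc N_\ell(\zeta^e)$ has \emph{finite length}; since $\mc N_\ell(\zeta^e)$ is a Serre subcategory (Corollary \ref{cor::27}) containing all the simple Whittaker modules, this identifies $\mc N_\ell(\zeta^e)$ with the finite-length objects of $\g_\ell\text{-Wmod}^{\zeta^e}$, which under He's equivalence are exactly the finite-dimensional $Z(\g_\ell)$-modules. You instead translate the two conditions in the definition of $\mc N_\ell(\zeta)$ one at a time through the Skryabin equivalence: finite generation corresponds to finite generation, and $Z(\g_\ell)$-local finiteness corresponds to itself once one checks the two $Z(\g_\ell)$-actions match, and f.g.~plus locally finite over $Z(\g_\ell)$ is the same as finite-dimensional.

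Two points to tighten. First, the assertion ``any equivalence preserves finite generation'' is not a formal consequence of the words; you should spell out that in both $\g_\ell\text{-Wmod}^{\zeta^e}$ and $Z(\g_\ell)\Mod$ (each cocomplete, closed under filtered colimits agreeing with those in the ambient module category) the finitely generated objects are precisely the compact ones, and compactness is preserved by any equivalence. Second, the action-matching issue you flag is real but resolves cleanly: for $z\in Z(\g_\ell)$ and $y\in U(\g_\ell)$ one has $z\cdot\pr(y)=\pr(zy)=\pr(yz)=\pr(y)\cdot\pr(z)$ in $Q_{\zeta^e}$, so the left $Z(\g_\ell)$-action on $Q_{\zeta^e}$ agrees with the right action of $\pr(Z(\g_\ell))\subseteq U(\g_\ell,e)$, whence $z(q\otimes v)=q\otimes zv$ in $Q_{\zeta^e}\otimes_{Z(\g_\ell)}V$ and the two $Z(\g_\ell)$-actions on $\mathrm{Wh}_{\zeta^e}(M)$ coincide. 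With those two clarifications your proof is complete. One small thing the paper's approach buys that yours leaves implicit is the statement, used elsewhere, that every object of $\mc N_\ell(\zeta)$ has finite length; but of course this follows from your conclusion as well, since finite-dimensional $Z(\g_\ell)$-modules have finite length and equivalences preserve it.
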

\begin{proof}
	 By \cite[Theorem 2.4.2]{He18},  we have $Z(\g_\ell)\cong U(\g_\ell,e)$.  By Lemmas  \ref{lem::200} and \ref{lem::21}, it suffices to show that the Whittaker functor $\text{Wh}_{\zeta^e}(\_)$ restricts to an equivalence from 
		$\mc N_\ell(\zeta^e)$  to the category of finite-dimensional $U(\g_\ell,e)$-modules. Since $\mc N_\ell(\zeta^e)$ is closed under extension by Corollary \ref{cor::27}, it remains to show that every object in  $\mc N_\ell(\zeta^e)$ has finite length.  We set $\zeta := \zeta^e$.
	
	Suppose that $M\in \mc{N}_\ell(\zeta)$. Then $M$ is generated by a finite-dimensional $Z(\g_\ell)U(\mf n_\ell)$-module $V$. This implies that $M$ is a quotient of  $\text{Ind}_{{Z}(\mathfrak{g}_{\ell}) {U}(\mathfrak{n}_{\ell})}^{{U}(\mathfrak{g}_{\ell})}V$. 	We claim  that the module  $\text{Ind}_{Z(\g_\ell)U(\mf n_\ell)}^{{U}(\mathfrak{g}_{\ell})}V$ is of finite length.  We shall proceed with the proof by induction on the dimension of $V$. If $V$ is simple, then by Corollary \ref{cor::24} we have  $V\cong\mathbb{C}_{\chi,\zeta}$, for some character  
	$\chi: {Z}(\mathfrak{g}_\ell)\rightarrow\mathbb{C}$. In this case, we know that  $\text{Ind}_{Z(\g_\ell)U(\mf n_\ell)}^{{U}(\mathfrak{g}_{\ell})}V$ is simple by Lemma \ref{cor::26}. 
	
	Now, suppose that $V$ is an arbitrary finite-dimensional $Z(\g_\ell)U(\mf n_\ell)$-module.  Let $V'\subseteq V$ be a submodule of $V$ such that  $V/{V'}$ is simple. Then we get an exact sequence
	$$\text{Ind}_{Z(\g_\ell)U(\mf n_\ell)}^{{U}(\mathfrak{g}_{\ell})}V'\rightarrow \text{Ind}_{Z(\g_\ell)U(\mf n_\ell)}^{{U}(\mathfrak{g}_{\ell})}V\rightarrow \text{Ind}_{Z(\g_\ell)U(\mf n_\ell)}^{{U}(\mathfrak{g}_{\ell})}{V/V'}\rightarrow 0.$$ By induction, $\text{Ind}_{Z(\g_\ell)U(\mf n_\ell)}^{{U}(\mathfrak{g}_{\ell})}V'$ is of finite length, and thus $\text{Ind}_{Z(\g_\ell)U(\mf n_\ell)}^{{U}(\mathfrak{g}_{\ell})}V$ is of finite length. Consequently, $M$ has finite length. 
\end{proof}

\subsection{Classification of non-singular simple Whittaker  $\wtg_\ell$-modules} \label{sect::433}
A $\wtg_\ell$-module $M$ is said to be a Whittaker $\wtg_\ell$-module if it restricts to a Whittaker $\g_\ell$-module. 
For a given non-singular character $\zeta: \mf n_\ell\rightarrow\C$. We denote by $\widetilde{\mc N}_\ell(\zeta) $  the category of all    $\wtg_\ell$-modules $M$ such that $\Res M\in  {\mc N}_\ell(\zeta)$. For any $M\in\widetilde{\mc N}_\ell(\zeta) $, we define the Whittaker vector subspace corresponding to $\zeta$ as follows:
\begin{align*}
	&\text{Wh}_\zeta(M):=\{m\in M|~xm=\zeta(x)m,~\text{for any }x\in \mf n_\ell\}.
\end{align*} Recall that equivalence $T_{\zeta,\zeta^e}: \mc N_\ell(\zeta)\rightarrow \mc N_\ell(\zeta^e)$. For any $M\in \mc N_\ell(\zeta)$,  we note that the space  $\text{Wh}_{\zeta^e}(T_{\zeta,\zeta^e}(M))$ can be interpreted as the space  $\text{Wh}_\zeta(M).$
 The following lemma will be useful.

\begin{lem} \label{lem::20}
	Let $\zeta: \mf n_\ell\rightarrow\C$ be a non-singular character. 
Suppose that  $M$ is a \emph{(}not necessarily finitely generated\emph{)} $\wtg_\ell$-module on which $x-\zeta(x)$ acts locally nilpotently, for any $x\in \mf n_\ell$. Then we have $M =U(\g_\ell)\cdot  \emph{Wh}_\zeta(M).$
\end{lem}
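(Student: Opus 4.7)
The plan is to combine the twisting equivalence of Lemma \ref{lem::200} with He's Skryabin-type equivalence (Lemma \ref{thm::21}). Observe that the desired equality only involves the $\g_\ell$-module structure of $M$, and that the hypothesis places $M$ (restricted to $\g_\ell$) in the category $\g_\ell$-Wmod$^{\zeta}$, which carries no finite-generation requirement. So throughout the argument I may treat $M$ as an object of $\g_\ell$-Wmod$^{\zeta}$.

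First I would reduce to the case $\zeta = \zeta^e$. As already observed in the excerpt, the twist $T_{\zeta,\zeta^e}$ from the proof of Lemma \ref{lem::200}, implemented by an automorphism $\phi$ of $\g_\ell$ with $\zeta^e \circ \phi = \zeta$, extends verbatim to an equivalence $\g_\ell\text{-Wmod}^{\zeta} \simeq \g_\ell\text{-Wmod}^{\zeta^e}$ (no finiteness is used). This equivalence preserves underlying vector spaces, identifies $\text{Wh}_\zeta(M)$ with $\text{Wh}_{\zeta^e}(T_{\zeta,\zeta^e}(M))$ as subsets, and satisfies $\phi(U(\g_\ell)) = U(\g_\ell)$ since $\phi$ is an automorphism. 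Consequently, the identity $M = U(\g_\ell) \cdot \text{Wh}_\zeta(M)$ holds if and only if the analogous identity holds for $T_{\zeta,\zeta^e}(M)$.

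With $\zeta = \zeta^e$, I would then invoke Lemma \ref{thm::21}: the counit of the adjunction provides a natural isomorphism of $\g_\ell$-modules
\[
Q_{\zeta^e} \tens{U(\g_\ell,e)} \text{Wh}_{\zeta^e}(M) \xrightarrow{\;\cong\;} M, \qquad \bar u \otimes w \mapsto u \cdot w.
\]
Since $Q_{\zeta^e} = U(\g_\ell)/I_{\zeta^e}$ is generated over $U(\g_\ell)$ by the image of $1$, the tensor product is generated as a $\g_\ell$-module by $\bar 1 \otimes \text{Wh}_{\zeta^e}(M)$. Transporting this generation across the above isomorphism yields $M = U(\g_\ell) \cdot \text{Wh}_{\zeta^e}(M)$, which combined with the first reduction step produces the claim for all non-singular $\zeta$.

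No serious obstacle is anticipated; the single point to verify is that Lemma \ref{thm::21} genuinely applies without any finite-generation hypothesis on $M$. This is built into the definitions of $\g_\ell$-Wmod$^{\zeta^e}$ and $U(\g_\ell,e)\Mod$ as stated in the excerpt, so the equivalence is directly available for the arbitrary modules permitted by the lemma, and the proof closes without further work.
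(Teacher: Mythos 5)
Your proof is correct and follows essentially the same route as the paper: reduce to $\zeta = \zeta^e$ via the twisting equivalence $T_{\zeta,\zeta^e}$, then deduce generation by Whittaker vectors from He's Skryabin-type equivalence (Lemma \ref{thm::21}). You simply make explicit the step the paper leaves terse, namely unfolding the natural isomorphism $Q_{\zeta^e}\otimes_{U(\g_\ell,e)}\text{Wh}_{\zeta^e}(M)\xrightarrow{\cong} M$ to read off that $M$ is generated over $U(\g_\ell)$ by its Whittaker vectors.
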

\begin{proof}
Recall the character $\zeta^e(\_):\mf n_\ell\rightarrow\C$ from Subsection \ref{sect::43}. In the case that $\zeta =\zeta^e$, the conclusion follows by Lemma \ref{thm::21}. Next, assume that $\zeta$ is an arbitrary non-singular character. As we have already mentioned above, the functor $T_{\zeta,\zeta^e}$ from Lemma \ref{lem::200} leads to an equivalence $T_{\zeta,\zeta^e}(\_):\g_\ell\text{-Wmod}^{\zeta}$ and $\g_\ell\text{-Wmod}^{\zeta^e}$. Recall that $T(M)$ and $M$ have the same underlying space. Furthermore, the Whittaker vector subspace $\text{Wh}_{\zeta^e}(T_{\zeta,\zeta^e}(M))$ can be identified as the subspace  $\text{Wh}_\zeta(M)\subseteq T_{\zeta,\zeta^e}(M).$ Since $T(M) =U(\g_\ell)\cdot \text{Wh}_{\zeta^e}(T_{\zeta,\zeta^e}(M))$,  the conclusion follows.\end{proof}

\begin{lem}\label{lem::24} 	Let $\zeta: \mf n_\ell\rightarrow\C$ be a non-singular character. Then 
the category ${\mc N}_\ell(\zeta)$ is closed under tensoring with finite-dimensional modules, that is, suppose that $M\in {\mc N}_\ell(\zeta)$, then $V\otimes M\in {\mc N}_\ell(\zeta),$ for any  finite-dimensional $\g_\ell$-module $V$. In particular, the 
 functors $\Ind(\_)$, $K(\_)$ and $\Res(\_)$ restrict to exact functors between $\mc N_\ell(\zeta)$ and $\widetilde{\mc  N_\ell}(\zeta)$.
\end{lem}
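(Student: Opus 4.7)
The plan is to reduce the problem to showing that the Whittaker space $\text{Wh}_\zeta(V\otimes M)$ is finite-dimensional, after which I would invoke the Skryabin-type equivalence of Lemma \ref{thm::21} together with Proposition \ref{prop::27}. Specifically, I would verify (a) that $V\otimes M$ lies in $\g_\ell$-Wmod$^\zeta$, and (b) that $\dim_{\C}\text{Wh}_\zeta(V\otimes M)<\infty$. Once these are in hand, the Skryabin-type equivalence (transported via $T_{\zeta,\zeta^e}$ to a general non-singular $\zeta$) identifies $V\otimes M$ with the finite-dimensional $Z(\g_\ell)$-module $\text{Wh}_\zeta(V\otimes M)$, and Proposition \ref{prop::27} then places $V\otimes M$ in $\mc N_\ell(\zeta)$.

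For condition (a), the crucial point is that every element of $\mf n^+_\ell$ acts nilpotently on any finite-dimensional $\g_\ell$-module. To see this, I would note that the nilpotent ideal $\mf I:=\bigoplus_{i\geq 1}\g\otimes\theta^i$ of $\g_\ell$ satisfies $\mf I^{\ell+1}=0$, so the $\g_\ell$-submodule filtration $V\supseteq \mf IV\supseteq \mf I^2V\supseteq\cdots$ has subquotients that are finite-dimensional $\g$-modules (since $\mf I$ acts by zero on them), on each of which $\mf n^+$ acts nilpotently. An induction on the length of this filtration then shows that each $x\in\mf n^+_\ell$ acts nilpotently on $V$. On the tensor product $V\otimes M$, the operator $(x-\zeta(x))$ equals the sum of the two commuting operators $x|_V\otimes 1$ (nilpotent) and $1\otimes(x-\zeta(x))|_M$ (locally nilpotent), and a binomial expansion then yields local nilpotency of the sum.

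For condition (b), I would apply the standard tensor-Hom adjunction for the finite-dimensional module $V$:
\begin{align*}
\text{Wh}_\zeta(V\otimes M)=\Hom_{U(\mf n^+_\ell)}(\C_\zeta,V\otimes M)\cong\Hom_{U(\mf n^+_\ell)}(V^*\otimes\C_\zeta,M).
\end{align*}
Since $\mf n^+_\ell$ acts nilpotently on $V$, and hence also on $V^*$, Engel's theorem implies that the finite-dimensional $U(\mf n^+_\ell)$-module $V^*\otimes\C_\zeta$ admits a composition series in which every subquotient is isomorphic to $\C_\zeta$. Applying $\Hom_{U(\mf n^+_\ell)}(-,M)$ to this series and using left-exactness yields the bound
\begin{align*}
\dim\text{Wh}_\zeta(V\otimes M)\leq(\dim V)\cdot\dim\text{Wh}_\zeta(M),
\end{align*}
which is finite because $\text{Wh}_\zeta(M)$ is finite-dimensional by Proposition \ref{prop::27}. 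The main technical point in the argument is this identification of the composition factors of $V^*\otimes\C_\zeta$ as a $U(\mf n^+_\ell)$-module, which in turn rests entirely on the nilpotency of the $\mf n^+_\ell$-action on any finite-dimensional $\g_\ell$-module.
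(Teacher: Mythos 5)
Your overall reduction matches the paper's: verify $V\otimes M\in\g_\ell$-Wmod$^\zeta$, reduce to the finiteness of $\mathrm{Wh}_\zeta(V\otimes M)$ via Lemma~\ref{lem::20} and the Kostant-type equivalence of Proposition~\ref{prop::27}, and then establish that finiteness. Where you genuinely diverge is the finiteness step. The paper cites the exact isomorphism $\mathrm{Wh}_\zeta(V\otimes M)\cong V\otimes\mathrm{Wh}_\zeta(M)$, appealing to arguments analogous to \cite[Theorem 8.1]{BK08} (going back to Lynch's thesis \cite{Ly79}), a nontrivial fact whose proof in that setting uses the Kazhdan filtration. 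You instead settle for the weaker inequality $\dim\mathrm{Wh}_\zeta(V\otimes M)\leq(\dim V)\cdot\dim\mathrm{Wh}_\zeta(M)$, obtained by identifying $\mathrm{Wh}_\zeta(V\otimes M)$ with $\Hom_{U(\mf n^+_\ell)}(V^*\otimes\C_\zeta,M)$ through tensor--Hom adjunction and then filtering $V^*\otimes\C_\zeta$ by a $U(\mf n^+_\ell)$-composition series with every factor $\cong\C_\zeta$ (available because $\mf n^+_\ell$ acts nilpotently on $V^*$), using left-exactness of $\Hom$ in the contravariant slot. This is more elementary and self-contained than the paper's citation, and the bound is all the lemma requires; the trade-off is that an inequality would not suffice for finer multiplicity statements where the exact isomorphism is used. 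Your explicit verification that $\mf n^+_\ell$ acts nilpotently on any finite-dimensional $\g_\ell$-module, via the filtration $V\supseteq\mf I V\supseteq\cdots\supseteq\mf I^{\ell+1}V=0$ for the nilpotent ideal $\mf I=\bigoplus_{i\geq1}\g\otimes\theta^i$, is also a welcome elaboration of what the paper dismisses as ``clear'': since $\g_\ell$ is not reductive, the classical statement for nilradicals of Borels of reductive Lie algebras does not apply verbatim, and your reduction modulo $\mf I$ is the right argument.
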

\begin{proof} 
	It is clear that $V\otimes M\in \g_\ell\text{-Wmod}^{\zeta}$, namely,  $x-\zeta(x)$ acts on  $V\otimes M$ locally nilpotently, for any $x\in \mf n_\ell$. The fact that $V\otimes M$ is a finitely generated $\g_\ell$-module can be proved by a standard argument; see, e.g., \cite[Subsection 1.1]{Hu08}. By Lemma \ref{lem::20}, it suffices to prove that $\text{Wh}_\zeta(V\otimes M)<\infty$.  
	 By using arguments completely analogous to those in the proof of \cite[Theorem 8.1]{BK08},  there follows that  a canonical    isomorphism  	 $\text{Wh}_\zeta(V\otimes M) \cong V\otimes \text{Wh}_\zeta(M).$
	 Such an isomorphism first appeared in 1979 in the  Ph.D. thesis  of  Lynch \cite{Ly79}. The conclusion now follows from Proposition \ref{prop::27}.
\end{proof}

\begin{prop} Let $\zeta: \mf n_\ell\rightarrow\C$ be a non-singular character. Then 
every object in  $\widetilde{\mc N}_\ell(\zeta)$ has finite length.  Furthermore,    $\widetilde{\mc N}_\ell(\zeta)$ is the Serre subcategory of $\wtg_\ell\mod$ generated by  all   simple $\wtg_\ell$-modules  on which $x-\zeta(x)$ acts locally nilpotently, for any $x\in \mf n_\ell$.
\end{prop}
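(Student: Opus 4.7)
The plan is to derive both assertions from the corresponding results for the even setting via the restriction functor $\Res\colon \widetilde{\mc N}_\ell(\zeta)\to \mc N_\ell(\zeta)$ supplied by Lemma~\ref{lem16}.

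For the finite-length statement, given $M\in\widetilde{\mc N}_\ell(\zeta)$, Lemma~\ref{lem16} yields $\Res M\in\mc N_\ell(\zeta)$, which is of finite length by Proposition~\ref{prop::27}. Any strictly ascending chain of $\wtg_\ell$-submodules of $M$ restricts to a strictly ascending chain of $\g_\ell$-submodules of $\Res M$ (strict inclusions remain strict at the level of underlying vector spaces), and therefore stabilizes. Hence $M$ has finite length over $\wtg_\ell$.

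Next, I would verify that $\widetilde{\mc N}_\ell(\zeta)$ is a Serre subcategory of $\wtg_\ell\mod$ using the exactness of $\Res$: for a short exact sequence $0\to M'\to M\to M''\to 0$ of $\wtg_\ell$-modules, one has $\Res M\in \mc N_\ell(\zeta)$ if and only if both $\Res M'$ and $\Res M''$ lie in $\mc N_\ell(\zeta)$, by Corollary~\ref{cor::27}; this transfers to $\widetilde{\mc N}_\ell(\zeta)$ by the definition $\widetilde{\mc N}_\ell(\zeta)=\{M:\Res M\in\mc N_\ell(\zeta)\}$. Combined with the finite-length result, $\widetilde{\mc N}_\ell(\zeta)$ equals the Serre subcategory of $\wtg_\ell\mod$ generated by its own simple objects.

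It remains to identify these simples with the simple $\wtg_\ell$-modules on which $x-\zeta(x)$ acts locally nilpotently for every $x\in \mf n_\ell^+$. One direction is built into the definition. For the converse, let $L$ be such a simple $\wtg_\ell$-module. Finite generation over $\wtg_\ell$ is automatic, and finite generation of $\Res L$ over $\g_\ell$ follows from the PBW isomorphism $U(\wtg_\ell)\cong \Lambda((\wtg_\ell)_\ob)\otimes U(\g_\ell)$ as vector spaces: a $\wtg_\ell$-cyclic generator becomes a set of $\g_\ell$-generators indexed by a basis of the finite-dimensional exterior factor. The hard part, and the main obstacle, is establishing local finiteness of $Z(\g_\ell)$ on $\Res L$. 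My plan here is to invoke Dixmier's variant of Schur's lemma (valid since $U(\wtg_\ell)$ is countable-dimensional over $\C$) to conclude that $Z(\wtg_\ell)$ acts on $L$ by a central character $\widetilde{\chi}\colon Z(\wtg_\ell)\to \C$, and then combine with Lemma~\ref{lem::20}, which furnishes $\Res L = U(\g_\ell)\cdot \mathrm{Wh}_\zeta(L)$, together with Lemma~\ref{lem::21}, to reduce the problem to showing that the $Z(\g_\ell)$-orbit of any single Whittaker vector is finite-dimensional. The additional ingredient needed for this last reduction is a Harish-Chandra-type compatibility making $Z(\g_\ell)$ a finite module over the image of $Z(\wtg_\ell)$ in $U(\g_\ell)$: then the factoring of the action through $\widetilde\chi$ forces $Z(\g_\ell)$ to act through a finite-dimensional quotient, yielding the required local finiteness and completing the identification $\Res L\in \mc N_\ell(\zeta)$.
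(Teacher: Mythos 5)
Your argument for the finite-length assertion (restrict to $\g_\ell$ via Lemma \ref{lem16}, apply Proposition \ref{prop::27}, note that chains of submodules restrict to chains) matches the paper's. Your observation that $\widetilde{\mc N}_\ell(\zeta)$ is a Serre subcategory of $\wtg_\ell\mod$ via exactness of $\Res$ and Corollary \ref{cor::27} is also fine. The problem is in the identification of the simple objects, where you correctly isolate the hard step (local $Z(\g_\ell)$-finiteness on $\Res L$) but then propose to close it with an ingredient that is false.

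The ``Harish-Chandra-type compatibility making $Z(\g_\ell)$ a finite module over the image of $Z(\wtg_\ell)$ in $U(\g_\ell)$'' does not hold for the Lie superalgebras considered here. Already in the non-Takiff case $\wtg=\gl(1|1)$, $\g=\h$, the Harish-Chandra image of $Z(\wtg)$ inside $S(\h)=\C[x,y]$ is (up to the $\rho$-shift) $\C+(x+y)\C[x,y]$: the quotient of $\C[x,y]$ by the ideal generated by that subalgebra's augmentation ideal is $\C[x,y]/(x+y)\cong\C[x]$, which is infinite-dimensional, so $Z(\g)$ is not module-finite over the image of $Z(\wtg)$. This is exactly the atypicality phenomenon for type I superalgebras, and it persists for the Takiff versions. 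So the reduction you propose cannot be carried out as stated, and the ``Dixmier central character on $L$ plus Whittaker-vector finiteness'' route stalls there.

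The paper avoids this entirely by a different reduction. By \cite[Theorem A]{CM21}, a simple $\wtg_\ell$-module $L$ with the stated local nilpotency is the top of $K(V)$ for the simple $\g_\ell$-module $V\cong L^{\wtg_\ell^1}$. Because $V$ is a simple $\g_\ell$-module, Dixmier's theorem gives it a $Z(\g_\ell)$-central character \emph{for free}, and $V$ inherits the local nilpotency of $\mf n_\ell^+$-action from $L$, so $V\in\mc N_\ell(\zeta)$. Then $\Res K(V)\cong\Lambda(\wtg_\ell^{-1})\otimes V$ lies in $\mc N_\ell(\zeta)$ by the closure under tensoring with finite-dimensional modules (Lemma \ref{lem::24}), and $\Res L$, being a quotient, lies there too by Corollary \ref{cor::27}. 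In short: invoke Dixmier at the level of the even simple $V$, not at the level of $L$, and transport across the Kac functor using Lemma \ref{lem::24} rather than any center-to-center comparison. That is the missing idea in your proposal.
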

\begin{proof}
	The first assertion is a direct consequence of  Proposition \ref{prop::27} and Lemma \ref{lem::24}. To prove the second assertion, we let $L$ be a simple $\wtg_\ell$-module on which $x-\zeta(x)$ acts locally nilpotently, for any $x\in \mf n_\ell$. Then $L$ is a quotient of $K(V)$, for some simple object $V\in \mc N_\ell(\zeta)$ by \cite[Theorem A]{CM21}. By Lemma \ref{lem::24} we may infer that $L\in \mc N_\ell(\zeta)$. Now, the conclusion follows by Corollary \ref{cor::27}.
\end{proof}

We recall the standard Whittaker module $\widetilde{M}(\la,\zeta)  := K(L(\la,\zeta)),$ for $\la\in \h^\ast_\ell$.
  \begin{thm} \label{thm::29} Suppose that $\wtg=\gl(m|n), \mf{osp}(2|2n),$ or $\pn$. Let $\zeta :\mf n_\ell\rightarrow\C$ be a non-singular character. 
 Then  	we have 
  	\begin{itemize}
  		\item[(1)]
  		 For each central character $\chi:Z(\g_\ell)\rightarrow\C$, the standard Whittaker module $\widetilde{M}(\chi,\zeta)$ has a simple top, which we denote by  $\widetilde{L}(\chi,\zeta)$. Both modules $\widetilde{M}(\chi,\zeta)$ and $\wtL(\chi,\zeta)$ are objects in $\widetilde{\mc N}_\ell(\zeta)$. 
  		\item[(2)] $\{\widetilde{L}(\chi,\zeta)|~\chi:Z(\g_\ell)\rightarrow\C \text{ is a character}\}$ is a complete and irredundant set of representatives of isomorphism classes of simple	objects  in $\widetilde{\mc N}_\ell(\zeta)$.
  	\end{itemize}                                                                                                                                                                
  \end{thm}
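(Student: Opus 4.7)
The strategy is to combine three inputs: Lemma~\ref{cor::26} (He--Xia) classifying the simple objects of $\mc N_\ell(\zeta)$ as $L(\chi,\zeta)$ for characters $\chi:Z(\g_\ell)\to\C$; Lemma~\ref{lem16}, which transports $\mc N_\ell(\zeta)$ into $\widetilde{\mc N}_\ell(\zeta)$ via the Kac functor; and the $d^{\wtg}$-eigenspace grading \eqref{eq::10} on Kac induced modules, which will let me control their simple quotients. I will also invoke \cite[Theorem A]{CM21}, already used earlier in the paper, to the effect that every simple $\wtg_\ell$-module is a simple quotient of $K(V)$ for some simple $\g_\ell$-module $V$.

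For Part~(1), Lemma~\ref{lem16} gives $\widetilde{M}(\chi,\zeta)=K(L(\chi,\zeta))\in\widetilde{\mc N}_\ell(\zeta)$. To exhibit a simple top, I will show that any proper submodule $N\subsetneq\widetilde{M}(\chi,\zeta)$ satisfies $N\cap(1\otimes L(\chi,\zeta))=0$: since \eqref{eq::10} is the eigenspace decomposition of $d^{\wtg}\in Z(\g_\ell)$, $N$ is graded, so $N_0:=N\cap(1\otimes L(\chi,\zeta))$ is a $\wtg_\ell^{\scriptscriptstyle \geq 0}$-submodule of the simple $\g_\ell$-module $L(\chi,\zeta)$ (with $\wtg_\ell^1$ acting trivially). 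Were $N_0$ nonzero it would equal $1\otimes L(\chi,\zeta)$ and generate all of $K(L(\chi,\zeta))$ under $U(\wtg_\ell)$, contradicting the properness of $N$. The sum of all proper submodules is therefore still proper, giving a unique maximal submodule and the simple top $\widetilde{L}(\chi,\zeta)$. Since the preceding proposition shows $\widetilde{\mc N}_\ell(\zeta)$ to be Serre, the simple $\widetilde{L}(\chi,\zeta)$ lies in $\widetilde{\mc N}_\ell(\zeta)$ too; as a byproduct, $1\otimes L(\chi,\zeta)$ canonically identifies with the top $d^{\wtg}$-eigenspace of $\widetilde{L}(\chi,\zeta)$.

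For Part~(2), let $L'$ be an arbitrary simple object of $\widetilde{\mc N}_\ell(\zeta)$. By \cite[Theorem A]{CM21}, $L'$ is the simple top of $K(V)$ for some simple $\g_\ell$-module $V$; the same argument as in Part~(1) (with $V$ in place of $L(\chi,\zeta)$) identifies $V$ with the top $d^{\wtg}$-eigenspace of $L'$, which is a $\g_\ell$-submodule of $\Res L'\in\mc N_\ell(\zeta)$, hence $V\in\mc N_\ell(\zeta)$ since $\mc N_\ell(\zeta)$ is Serre by Corollary~\ref{cor::27}. Lemma~\ref{cor::26} then yields $V\cong L(\chi,\zeta)$ for some $\chi$, and consequently $L'\cong\widetilde{L}(\chi,\zeta)$. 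For irredundance, any isomorphism $\widetilde{L}(\chi_1,\zeta)\cong\widetilde{L}(\chi_2,\zeta)$ forces the top $d^{\wtg}$-eigenspaces to be isomorphic as $\g_\ell$-modules, i.e.\ $L(\chi_1,\zeta)\cong L(\chi_2,\zeta)$, whence $\chi_1=\chi_2$ by Lemma~\ref{cor::26}.

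The one step where I expect to be careful is verifying the ``top eigenspace'' identification: since $d^{\wtg}$ acts on the simple $\g_\ell$-module $L(\chi,\zeta)$ by the scalar $\chi(d^{\wtg})$ and on $\Lambda^k(\wtg_\ell^{-1})\otimes L(\chi,\zeta)$ by $\chi(d^{\wtg})-k_{\wtg}$ with $k_{\wtg}>0$ for $k>0$, the unique maximal $d^{\wtg}$-eigenvalue on $\widetilde{L}(\chi,\zeta)$ is $\chi(d^{\wtg})$, and its eigenspace is intrinsically $1\otimes L(\chi,\zeta)$. This observation is the technical backbone that makes both existence of simple tops and irredundance fall out uniformly.
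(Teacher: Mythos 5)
Your proof is correct and follows essentially the same logical skeleton as the paper's: Kac induction carries $L(\chi,\zeta)$ into $\widetilde{\mc N}_\ell(\zeta)$ by Lemma~\ref{lem16}, \cite[Theorem~A]{CM21} supplies the connection between simple $\wtg_\ell$-modules and Kac inductions of simple $\g_\ell$-modules, and Lemma~\ref{cor::26} together with the Serre property of $\mc N_\ell(\zeta)$ (Corollary~\ref{cor::27}) closes the loop. The one place you diverge is that you re-derive the ``simple top'' assertion via the $d^{\wtg}$-eigenspace grading \eqref{eq::10} (any proper submodule is graded and must have trivial degree-zero piece, so the sum of proper submodules is still proper), whereas the paper simply cites \cite[Theorem~A]{CM21} for both the existence of the simple top and the surjectivity statement; likewise you identify the distinguished $\g_\ell$-submodule of a simple $S$ as the top $d^{\wtg}$-eigenspace, while the paper identifies it as the $\wtg_\ell^1$-invariants $S^{\wtg_\ell^1}$ via \cite[Remark~3.3, Corollary~4.3]{CM21}. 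These two descriptions agree for the simple top (the top eigenspace is killed by $\wtg_\ell^1$, and conversely simplicity forces the invariants to equal the image of $1\otimes V$), so your route buys a self-contained argument at the cost of re-deriving a cited fact; it also makes the irredundance argument explicit, which the paper leaves implicit.
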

\begin{proof}  Let $\la \in \h^\ast_\ell$. 
Since $\widetilde{M}(\la,\zeta) = K({L}(\la,\zeta))$, the fact that $\widetilde{M}(\la,\zeta)$ has a simple top is a consequence of \cite[Theorem A]{CM21}. Also, it follows by Lemma \ref{lem::24} that   $\widetilde{M}(\la,\zeta), \wtL(\la,\zeta) \in \widetilde{\mc N}_\ell(\zeta)$. 
  
  Next, we are going to prove Part (2). Let $S$ be a simple object in $\widetilde{\mc N}_\ell(\zeta)$. By \mbox{\cite[Theorem A]{CM21}}, $S$ is isomorphic to  the top of $K(V_S)$, for some simple $\g_\ell$-module $V_S$. Since  $V_S$ is a direct summand of $\Res S$ (in fact, $V_S\cong S^{\wtg^1_\ell}$, by  \mbox{\cite[Remark 3.3]{CM21}} and \cite[Corollary 4.3]{CM21}),  it follows by Lemma \ref{lem::24} that $V_S$ is an object in $\mc N_\ell(\zeta)$. This proves the assertion in Part (2). \end{proof}

\begin{proof}[Proof of Theorem \ref{thm::3rd}]
The first assertion follows from Theorem \ref{thm::29}. Also, Theorem \ref{thm::1st}, combined with Theorem  \ref{prop::Kacsimple} and Proposition \ref{lem::4}, gives the second assertion in \eqref{eq::9}. This completes the proof. 
\end{proof}

 \appendix{}
\section{Linkage principle for modules in the category $\wto_\ell$} \label{App::blocks}
 The goal of this appendix is to complete the proof of Proposition \ref{lem::blockdec}. Throughout this subsection, we let $\wtg$ be a basic classical Lie superalgebra with a fixed triangular decomposition $\wtg=\widetilde{\mf n}^-\oplus \mf h \oplus \widetilde{\mf n}$ from \eqref{sect::200}. We shall keep the notations
 from previous sections. The proof follows the same strategy as in the Lie algebra
 case given in \cite[\mbox{Chapter III}, Subsections 3.1, 3.3]{K02}. In the following, we will only provide the main steps, while omitting the parts that are completely analogous to the Lie algebra
 case and for which we refer to loc. cit. for details. 
 \subsection{Tor and Ext functors} \label{sect::A1}
Denote by $\wtg_\ell\Mod$ and $\text{Mod-}\wtg_\ell$ the categories of left and right $\wtg_\ell$-modules, respectively.  
 Let ${\C}$-Mod denote the category of all vector spaces over $\C$.  For any $k\geq 0$, we define the   {\em Tor} and {\em Ext} functors (e.g., see \mbox{\cite[Appendix D]{K02}}):
\begin{align}
&\Tor_k^{\wtg_\ell}(\_,\_):~  \text{Mod}\text{-}\wtg_\ell \times \wtg_\ell\Mod \rightarrow \C\text{-Mod},\\
&\Ext_{\wtg_\ell}^k(\_,\_):~\wtg_\ell\Mod \times \wtg_\ell\Mod \rightarrow \C\text{-Mod}.
\end{align}

 Let $(\_)^\omega: \wtg\rightarrow \wtg$ be the the anti-automorphism  of   \cite[Proposition 8.1.6]{Mu12}. 
We may note that it extends to an anti-automorphism of $\wtg_\ell$:
\begin{align*}
&\sum_{i=0}^{\ell}x_i\otimes \theta^i \mapsto \sum_{i=0}^\ell x_i^{\omega}\otimes \theta^i, \text{ for $ x_0,x_1\ldots,x_\ell \in \wtg$}.
\end{align*}  Abusing the notation, we shall also denote this map by  $(\_)^{\omega}$.  Let $F(\_): \wtg_\ell\Mod \rightarrow \text{Mod-}\wtg_\ell$ be the equivalence induced by $(\_)^{\omega}$, that is, for any $M\in \wtg_\ell\mod$, $F(M)$ has the same underlying subspace as $M$ with the right $\wtg_\ell$-action given by  
$mx:=(-1)^{\ov m\cdot \ov x}x^{\omega}m,$ for homogeneous elements $x\in U(\wtg_\ell)$ and $m\in M$. Here the notation $\ov v$, as usual,  denotes the parity of $v$ for a given homogeneous element $v$.  For simplicity, we define 
$\Tor_k^{\wtg_\ell}(M,N) := \Tor_k^{\wtg_\ell}(F(M),N)$, for any   left $\wtg_\ell$-module $M$ and right $\wtg_\ell$-module $N$.

For  $M\in \wtg_\ell\Mod$, we let $M^\ast : =\Hom_\C(M,\C)$ denote the full dual representation of $M$. In addition, 
 $M$ is said to be a $(\wtg_\ell,\mf h)$-module if it is locally finite and semisimple over $\h$.  In this case, we let $M^\vee$ denote the {\em restricted dual} of $M^\ast$, that is, $M^\vee  :=\bigoplus_{\nu\in {\h^\ast}} (M^{\nu})^\ast \subseteq M^\ast$ is a $\wtg_\ell$-submodule of $M^\ast$ consisting of $\h$-semisimple part of $M^\ast$. Here $M^\nu$ denotes the weight subspace of $M$ associated to $\nu$; see Subsection \ref{sect::222}. Furthermore, we define  a new $\wtg_\ell$-module structure of the restricted dual of $M$ by declaring that $(x\cdot f)m = (-1)^{\ov f\ov x}f(x^{\omega}m),$ where  $f\in M^\vee$, 
$x\in \wtg_\ell$  are homogenous and $m\in M$.  With this new $\wtg_\ell$-action we denote $M^\vee$ by $M^\sigma$. Then this gives rise to an exact functor $(\_)^\sigma: \wto_\ell\rightarrow\wtg_\ell\Mod$. Similarly, we let $(\_)^\tau: \wto_\ell\rightarrow\wtg_\ell\Mod$ denote the exact functor induced by the anti-automorphism $(\_)^{{\omega}^{-1}}$ of $\wtg_\ell$. Then we have $(M^\sigma)^\tau\cong M \cong (M^\tau)^\sigma$, for any $M\in \wto_\ell$. 

\subsection{Basic properties}
In this subsection, we prepare some useful tools. For any $\wtg_\ell$-module $M$, we let $M^\omega$ denote the twisted $\wtg_\ell$-module with the same underlying space as $M$ and the new $\wtg_\ell$-action twisted by the automorphism $-(\_)^\omega$. Furthermore, let $M^{\omega'}$ denote the twisted $\wtg_\ell$-module by the auto-morphism of $-(\_)^\omega$.  
 \begin{lem} \label{lem::9}  Let $M,N$ be two  $\wtg_\ell$-modules. Then  we have 
 	\begin{align}
 	&\emph{\Tor}_k^{\wtg_\ell}(M,N)\cong  \emph{\Tor}_k^{\wtg_\ell}(\C, M\otimes N)\cong
 	 \emph{\Tor}_k^{\wtg_\ell}(N,M), \label{eq::144}\\
 	 	&\emph{\Tor}_k^{\wtg_\ell}(M^{\omega'},N)\cong 
 	 \emph{\Tor}_k^{\wtg_\ell}(M,N^{\omega}),\label{eq::155}\\
 	&\Ext_{\wtg_\ell}^k(M,N^\ast)\cong \emph{\Tor}_k^{\wtg_\ell}(M,N)^\ast, \label{eq::166}
 	\end{align} for any $k\geq 0.$
 \end{lem}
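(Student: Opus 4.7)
The plan is to establish each identity via standard homological-algebra manipulations that exploit the Hopf superalgebra structure on $U(\wtg_\ell)$ together with the equivariance of $\Tor$ and $\Ext$ under twisting by automorphisms of $\wtg_\ell$. All three formulas are adaptations of classical facts to the super setting, and no substantially new ideas are required beyond bookkeeping of Koszul signs and of the left/right conversion carried out by the functor $F$.

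For \eqref{eq::144}, I would first prove the ``shift to the trivial module'' isomorphism. The key input is that for any vector space $V$, the left $\wtg_\ell$-module $U(\wtg_\ell)\otimes V$ equipped with the coproduct (diagonal) action is free, via the Hopf-algebraic isomorphism $u\otimes v\mapsto \sum u_{(1)}\otimes u_{(2)}v$ (whose inverse uses the antipode of $U(\wtg_\ell)$). Consequently, given a free resolution $P_\bullet\to M$ by modules $P_i=U(\wtg_\ell)\otimes V_i$, the complex $P_\bullet\otimes N$ (with coproduct action) is a free resolution of $M\otimes N$, and its $\wtg_\ell$-coinvariants compute to the same complex $V_\bullet\otimes N$ as $F(P_\bullet)\otimes_{\wtg_\ell}N$. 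Taking homology yields $\Tor_k^{\wtg_\ell}(\C,M\otimes N)\cong \Tor_k^{\wtg_\ell}(M,N)$. The second isomorphism of \eqref{eq::144} then follows at once from super-cocommutativity of the coproduct on $U(\wtg_\ell)$: the Koszul flip $m\otimes n\mapsto (-1)^{\ov m\,\ov n}n\otimes m$ is a $\wtg_\ell$-module isomorphism $M\otimes N\cong N\otimes M$, producing $\Tor_k^{\wtg_\ell}(\C,M\otimes N)\cong \Tor_k^{\wtg_\ell}(\C,N\otimes M)$.

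For \eqref{eq::155}, I would invoke the naturality of $\Tor$ under automorphisms of $\wtg_\ell$: for any automorphism $\varphi$, twisting a projective resolution gives $\Tor_k^{\wtg_\ell}(M^\varphi,N^\varphi)\cong \Tor_k^{\wtg_\ell}(M,N)$. Applying this to $\varphi=\omega^{-1}$, where $\omega$ is the Chevalley automorphism, and using that $(N^{\text{st}})^{\omega^{-1}}\cong N$ by construction of the twist, one obtains
\begin{align*}
\Tor_k^{\wtg_\ell}(M,N^{\text{st}})\cong \Tor_k^{\wtg_\ell}(M^{\omega^{-1}},(N^{\text{st}})^{\omega^{-1}})\cong \Tor_k^{\wtg_\ell}(M^{\text{st}'},N).
\end{align*}
The only verification required is that twisting the first argument of $\Tor$ through $F$ corresponds to twisting the right action by $\omega$ compatibly, which is immediate from unwinding the definitions of $F$ and of $(\_)^{\text{st}'}$.

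For \eqref{eq::166}, take a projective resolution $P_\bullet\to M$ and combine the Hom-tensor adjunction
\begin{align*}
\Hom_{\wtg_\ell}(P,N^\ast)\cong \Hom_\C(F(P)\otimes_{\wtg_\ell}N,\C)=(F(P)\otimes_{\wtg_\ell}N)^\ast
\end{align*}
(valid because the $\wtg_\ell$-action on $N^\ast=\Hom_\C(N,\C)$ is induced by the same anti-automorphism $(\_)^{\text{st}}$ that defines $F$) with the exactness of $(\_)^\ast$ over $\C$, so that cohomology commutes with dualization. This yields $\Ext_{\wtg_\ell}^k(M,N^\ast)\cong \Tor_k^{\wtg_\ell}(M,N)^\ast$ directly. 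The anticipated obstacle across all three parts is purely the careful tracking of Koszul signs and of the interplay between left and right $\wtg_\ell$-modules mediated by $F$; no genuinely new ideas beyond the classical Hopf-algebraic $\Tor$/$\Ext$ formalism are needed.
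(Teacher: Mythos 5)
Your proposal is correct and recovers the lemma; however, the bookkeeping you relegate to a parenthetical remark in \eqref{eq::155} is exactly where the paper takes a slightly different path, so the comparison is worth spelling out. For \eqref{eq::144} and \eqref{eq::166} the paper simply cites \cite[Lemma 3.1.13]{K02}; your Hopf-superalgebraic argument (free diagonal modules, cocommutativity, Hom--tensor adjunction) is precisely what that lemma encodes, transported to the super setting, so there is no substantive difference. For \eqref{eq::155} the two routes genuinely diverge in presentation: the paper reinterprets $\Tor_k^{\wtg_\ell}(\C,V)$ as Lie superalgebra homology $H_k(\wtg_\ell,V)$, invokes the invariance $H_k(\wtg_\ell, V) \cong H_k(\wtg_\ell, V^{\text{st}})$ (modelled on \cite[Lemma 3.3.3]{K02}), and then combines this with \eqref{eq::144} applied to $V = M^{\text{st}'}\otimes N$; you instead twist both arguments of $\Tor$ by $\omega^{-1}$ at the level of projective resolutions and use $(N^{\text{st}})^{\omega^{-1}}\cong N$. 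Your version is more direct because it bypasses the homology interpretation, but it requires the check you acknowledge: that the conversion functor $F$ commutes with the $\omega^{-1}$-twist, i.e.\ $F(M^{\omega^{-1}}) \cong (F(M))^{\omega^{-1}}$ as right $\wtg_\ell$-modules. This holds because the Chevalley automorphism $\omega(X)=-X^{\text{st}}$ commutes with the supertransposition $(\_)^{\text{st}}$ (both are built from $(\_)^{\text{st}}$ up to sign), so the two ways of producing the twisted right module coincide. The paper's route through Lie superalgebra homology circumvents this $F$-compatibility check at the cost of invoking \eqref{eq::144} and the homology interpretation twice. Both are correct; either expansion is acceptable given that the paper itself defers the details to Kumar.
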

\begin{proof} The proof of \cite[Lemma 3.1.13]{K02} can be adapted to establish the isomorphisms in \eqref{eq::144},  \eqref{eq::166}, and thus we omit the proof.
	
	To prove the isomorphism in  \eqref{eq::144}, we let $H_k(\wtg_\ell, V)$ be the $k$-th Lie superalgebra homology of $\wtg_\ell$ with coefficients in $V$. 
	It is standard to interpret $H_k(\wtg_\ell, V)$ as $\Tor_k^{\wtg_\ell}(\C,V)$; see also \cite[Lemma 3.1.9]{K02}.
 	It follows by  an argument completely analogous to the proof of \cite[Lemma 3.3.3]{K02} that $H_k(\wtg_\ell, V) \cong  H_k(\wtg_\ell, V^{\omega})$. Together with \eqref{eq::155}, \eqref{eq::166}, this establishes the isomorphism in \eqref{eq::166} follows. 
\end{proof}

 The following lemma is an analogue of \cite[Lemma 3.3.5]{K02}. 
\begin{lem}\label{lem::10} Suppose that $M,N$ are $(\wtg_\ell,\mf h)$-modules. Then 
	\begin{align}
	&\Ext^k_{\wtg_\ell}(M,N^\vee) \cong \Ext_{\wtg_\ell}^k(M,N^\ast), \text{ for any $k\geq 0$.} \label{eq::14}
	\end{align}
Therefore, we have 
\begin{align}
&\Ext_{\wtg_\ell}^k(M,N)\cong {\emph{\Tor}}^{\wtg_\ell}_k(M^{\omega },N^\sigma)^\ast\cong {\emph{\Tor}}^{\wtg_\ell}_k(M^{\omega' },N^\tau)^\ast, \text{ for any $k\geq 0$.} \label{eq::15}
\end{align} 
\end{lem}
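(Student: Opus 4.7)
\textbf{Proof plan for Lemma \ref{lem::10}.}  The plan is to establish \eqref{eq::14} first by a weight-space argument and then deduce \eqref{eq::15} by combining with Lemma \ref{lem::9}.

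For the $k=0$ case of \eqref{eq::14}, I would exploit the weight decomposition of $N^\ast$.  Since $N=\bigoplus_\mu N^\mu$ is $\h$-semisimple, one has $N^\ast=\prod_\mu (N^\mu)^\ast$ as $\h$-modules, and $\h$ acts on each factor $(N^\mu)^\ast$ by a single character.  Consequently the sum of all $\h$-eigenvectors in $N^\ast$ equals the direct sum $\bigoplus_\mu (N^\mu)^\ast=N^\vee$.  Now for any $\wtg_\ell$-homomorphism $\phi:M\to N^\ast$, the image of a weight vector $m\in M^\mu$ is an $\h$-eigenvector in $N^\ast$, so it must lie in $N^\vee$.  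Since $M$ is $\h$-semisimple, $\phi(M)\subseteq N^\vee$, which gives $\Hom_{\wtg_\ell}(M,N^\ast)=\Hom_{\wtg_\ell}(M,N^\vee)$.

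For $k>0$, I would choose a free resolution $P^\bullet\to M$ of $M$ by $(\wtg_\ell,\h)$-modules, for instance the standard bar resolution $B_\bullet(M)=U(\wtg_\ell)^{\otimes(\bullet+1)}\otimes M$, whose terms inherit an $\h$-weight decomposition from the adjoint action of $\h$ on $U(\wtg_\ell)$ together with the weight grading of $M$.  Applying the Step~1 observation termwise gives $\Hom_{\wtg_\ell}(P^i,N^\ast)=\Hom_{\wtg_\ell}(P^i,N^\vee)$ for every $i$, and taking cohomology yields \eqref{eq::14}.

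To derive \eqref{eq::15}, I would use $N\cong (N^\sigma)^\tau$.  Since $(N^\sigma)^\tau$ is precisely the restricted dual of the $(\wtg_\ell,\h)$-module $N^\sigma$ (with the action twisted by $(\_)^{\text{st}'}$), equation \eqref{eq::14} applied to $N^\sigma$ gives
\[
\Ext^k_{\wtg_\ell}(M,N)\cong \Ext^k_{\wtg_\ell}(M,(N^\sigma)^\tau)\cong \Ext^k_{\wtg_\ell}(M,(N^\sigma)^\ast).
\]
Then \eqref{eq::166} of Lemma \ref{lem::9} yields $\Ext^k_{\wtg_\ell}(M,(N^\sigma)^\ast)\cong \Tor_k^{\wtg_\ell}(M,N^\sigma)^\ast$, and by the convention $\Tor_k^{\wtg_\ell}(M,-)=\Tor_k^{\wtg_\ell}(F(M),-)$ from Subsection \ref{sect::A1}, the right-hand side equals $\Tor_k^{\wtg_\ell}(M^{\text{st}},N^\sigma)^\ast$.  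The second isomorphism of \eqref{eq::15} is obtained analogously, using $N\cong (N^\tau)^\sigma$ to reverse the roles of the two antiautomorphisms, or equivalently by combining the first isomorphism with \eqref{eq::155}.

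\textbf{Main obstacle.}  The delicate point is Step~2: one must ensure that the bar-type resolution chosen is genuinely a resolution by $(\wtg_\ell,\h)$-modules and that it computes the absolute $\Ext$ in $\wtg_\ell\Mod$, so that the termwise identification from Step~1 is legitimate.  The rest is bookkeeping with the sign conventions and keeping straight which of the antiautomorphisms $(\_)^{\text{st}}$ and $(\_)^{\text{st}'}$ appears in each $\Hom$ and $\Tor$.
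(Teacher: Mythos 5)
Your argument for \eqref{eq::14} is essentially the one the paper intends: the paper just cites the adaptation of \cite[Lemma~3.3.5]{K02} (noting the adjoint module is $(\wtg_\ell,\h)$), and Kumar's proof there is exactly the two-step weight-vector-plus-resolution argument you describe. That part is fine.

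The derivation of \eqref{eq::15}, however, has a genuine twist-bookkeeping error, and the two places where it goes wrong are not things that ``cancel'' on the stated grounds. First, you apply \eqref{eq::14} with $N$ replaced by $N^\sigma$ to pass from $\Ext^k_{\wtg_\ell}(M,(N^\sigma)^\tau)$ to $\Ext^k_{\wtg_\ell}(M,(N^\sigma)^\ast)$. But $(N^\sigma)^\tau$ is \emph{not} the restricted dual of $N^\sigma$ in the sense of \eqref{eq::14}: its action is twisted by $(\_)^{\mathrm{st}'}$, whereas $(N^\sigma)^\vee$ and $(N^\sigma)^\ast$ in \eqref{eq::14} carry the untwisted contragredient action. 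The correct move is to observe $(N^\sigma)^\tau \cong \bigl((N^\sigma)^{\mathrm{st}'}\bigr)^\vee$ and apply \eqref{eq::14} with the twisted module $(N^\sigma)^{\mathrm{st}'}$, giving $\Ext^k_{\wtg_\ell}\bigl(M,((N^\sigma)^{\mathrm{st}'})^\ast\bigr)$. Second, your final step ``by the convention $\Tor_k(M,\_)=\Tor_k(F(M),\_)$, the right-hand side equals $\Tor_k(M^{\mathrm{st}},N^\sigma)^\ast$'' is a non-sequitur: the $F$-convention only \emph{defines} $\Tor_k$ of two left modules, it does not say $\Tor_k(M,V)\cong\Tor_k(M^{\mathrm{st}},V)$, which is false in general since $F(M)\ne F(M^{\mathrm{st}})$. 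What the paper actually does is apply \eqref{eq::166} to obtain $\Tor_k^{\wtg_\ell}\bigl(M,(N^\sigma)^{\mathrm{st}'}\bigr)^\ast$ and then invoke the $\mathrm{st}/\mathrm{st}'$-exchange isomorphism \eqref{eq::155} of Lemma~\ref{lem::9} to move the twist from the second argument to the first, yielding $\Tor_k^{\wtg_\ell}(M^{\mathrm{st}},N^\sigma)^\ast$. You correctly note that \eqref{eq::155} gets the second isomorphism of \eqref{eq::15} from the first, but you need it already for the first one; without it, both of your shortcut steps are unjustified.
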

\begin{proof}
	Since the adjoint module of $\wtg_\ell$ is a $(\wtg_\ell,\h)$-module, 	the proof of \mbox{\cite[Lemma 3.3.5]{K02}} can be adapted in a straightforward way to prove the isomorphism in  \eqref{eq::14}.
 To establish the isomorphism in \eqref{eq::15}, we calculate
\begin{align*}
&\Ext_{\wtg_\ell}^k(M,N)\\
&\cong \Ext_{\wtg_\ell}^k(M,(N^{\sigma})^\tau)\\
&\cong \Ext_{\wtg_\ell}^k(M,((N^{\sigma})^{\omega'})^\ast) \text{ by \eqref{eq::14}}\\
&\cong \Tor^{\wtg_\ell}_k(M,(N^{\sigma})^{\omega'})^\ast \text{ by \eqref{eq::166}}\\
&\cong \Tor^{\wtg_\ell}_k(M^{\omega},N^{\sigma})^\ast \text{ by \eqref{eq::155}.}
\end{align*} Similarly for the isomorphism $\Ext_{\wtg_\ell}^k(M,N)\cong {\text{\Tor}}^{\wtg_\ell}_k(M^{\omega' },N^\tau)^\ast$. This completes the proof.
\end{proof}

The following lemma is an analogue of \cite[Lemma 3.1.14]{K02}. 
\begin{lem} \label{lem::11} 
	Let $\mf s \supseteq \mf t$ be Lie superalgebras. Then for any $\mf s$-module $M$ and $\mf t$-module $N$, we have ${\emph{\Tor}}_k^{\mf s}(M,U(\mf s)\otimes_{\mf t} N)\cong {\emph{\Tor}}_k^{\mf t}(\Res_{\mf t}^{\mf s}M, N), \text{ for any $k\geq 0$.}$
	\end{lem}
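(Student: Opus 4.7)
The plan is to use a change of rings argument combined with a projective resolution, which is the standard route for Shapiro-type identities. Start by choosing a projective resolution $P_\bullet \to N \to 0$ of $N$ in the category of $\mf{t}$-modules. By the super PBW theorem, $U(\mf{s})$ is free, and in particular flat, as a right $U(\mf{t})$-module, so the induction functor $U(\mf{s}) \otimes_{U(\mf{t})} (-)$ is exact. It is moreover a left adjoint to the exact restriction functor $\Res_{\mf{t}}^{\mf{s}}$, hence sends projectives to projectives. Consequently, applying induction yields a projective resolution
\begin{equation*}
U(\mf{s}) \otimes_{U(\mf{t})} P_\bullet \,\longrightarrow\, U(\mf{s}) \otimes_{U(\mf{t})} N \,\longrightarrow\, 0
\end{equation*}
of the induced module as an $\mf{s}$-module.

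Next, I would compute the Tor on the left-hand side using this resolution. By definition,
\begin{equation*}
\Tor_k^{\mf{s}}\bigl(M, U(\mf{s}) \otimes_{U(\mf{t})} N\bigr) \;=\; H_k\bigl(F(M) \otimes_{U(\mf{s})} (U(\mf{s}) \otimes_{U(\mf{t})} P_\bullet)\bigr),
\end{equation*}
where $F(\_)$ is the super-version of the left-to-right module conversion recalled in Subsection \ref{sect::A1}. The associativity of the tensor product in the super setting gives a natural isomorphism
\begin{equation*}
F(M) \otimes_{U(\mf{s})} \bigl(U(\mf{s}) \otimes_{U(\mf{t})} P_\bullet\bigr) \;\cong\; F(\Res_{\mf{t}}^{\mf{s}} M) \otimes_{U(\mf{t})} P_\bullet,
\end{equation*}
because the right $U(\mf{t})$-module structure on $F(M)$ obtained by restriction coincides with that coming from $F(\Res M)$. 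Taking homology of the right-hand complex produces $\Tor_k^{\mf{t}}(\Res_{\mf{t}}^{\mf{s}} M, N)$, which yields the desired isomorphism.

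The only subtle point is to keep track of the Koszul signs arising when one reinterprets left modules as right modules via $(\_)^{\text{st}}$, especially when one verifies that the associativity isomorphism above commutes with the $U(\mf{t})$-actions on both sides. This is routine, but it is the one step where care is required; the rest is formal homological algebra that transfers verbatim from the Lie-algebra case (compare \cite[Lemma 3.1.14]{K02}). No new input from the specific structure of $\wtg_\ell$ is needed, so the statement holds for arbitrary pairs $\mf{s}\supseteq \mf{t}$ of Lie superalgebras as claimed.
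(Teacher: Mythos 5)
Your argument is correct and follows the standard route: resolve $N$ projectively over $U(\mf{t})$, use that $U(\mf{s})$ is free (hence flat) over $U(\mf{t})$ by the super PBW theorem to induct the resolution, and then apply the associativity isomorphism $F(M) \otimes_{U(\mf{s})} (U(\mf{s}) \otimes_{U(\mf{t})} P_\bullet) \cong F(M)|_{U(\mf{t})} \otimes_{U(\mf{t})} P_\bullet$. This is exactly the change-of-rings argument from \cite[Lemma 3.1.14]{K02} that the paper invokes, transported to the super setting, so you take essentially the same approach.
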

  \begin{proof}
Adapt the proof of  \cite[Lemma 3.1.14]{K02}.
  \end{proof}

\subsection{Extension groups and the proof of Proposition \ref{lem::blockdec}} 
Recall the equivalence relation $\sim$ on $\h^\ast_\ell$ introduced in Subsection \ref{sect::223}. 
Let $\Lambda\subseteq \mf h^\ast_\ell/\sim$ be an equivalence class. 

A $\wtg_\ell$-module $M$ is said to be of {\em type $\Lambda$} provided that  any  simple subquotient of $M$ is isomorphic to a simple highest weight module $\wtL(\la)$ such that  $\la\in \Lambda.$  Here we extend the same terminology, as introduced for $\wto_\ell$ in Subsection \ref{sect::223}, for $\wtg_\ell\Mod$.

\begin{prop}  \label{prop::13}
	Suppose that  $\la,\mu\in \h^\ast_\ell$ with $\la\neq \mu$. Then  
	\begin{align} 
	&\Ext_{\wtg_\ell}^1(\widetilde{M}(\la),\widetilde{M}(\mu)^{\sigma})=\Ext_{\wtg_\ell}^1(\widetilde{M}(\la),\widetilde{M}(\mu)^{\tau}) =0. \label{eq::vani1}
	\end{align} 
Furthermore, suppose that $\la\not\sim \mu$. Then  for any quotient  $\widetilde{M}(\la)\twoheadrightarrow  M$ and submodule  $N\hookrightarrow \widetilde{M}(\mu)^\sigma$ we have 
\begin{align}
&\Ext^1_{\wtg_\ell}(M,N) =0. \label{eq::21}
\end{align}
\end{prop}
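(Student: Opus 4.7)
The plan is to follow the homological strategy of \cite[Chapter III, Subsections 3.1, 3.3]{K02}, reducing Ext to Tor and then to a computation over $\widetilde{\mf b}_\ell$, adapted to the super setting.

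For \eqref{eq::vani1}, applying \eqref{eq::15} of Lemma \ref{lem::10} together with the identifications $(\widetilde{M}(\mu)^\sigma)^\tau\cong\widetilde{M}(\mu)\cong(\widetilde{M}(\mu)^\tau)^\sigma$ yields
\begin{align*}
\Ext^1_{\wtg_\ell}(\widetilde{M}(\la),\widetilde{M}(\mu)^\sigma)&\cong\Tor_1^{\wtg_\ell}(\widetilde{M}(\la)^{\text{st}'},\widetilde{M}(\mu))^\ast,\\
\Ext^1_{\wtg_\ell}(\widetilde{M}(\la),\widetilde{M}(\mu)^\tau)&\cong\Tor_1^{\wtg_\ell}(\widetilde{M}(\la)^{\text{st}},\widetilde{M}(\mu))^\ast.
\end{align*}
Writing $\widetilde{M}(\mu)=U(\wtg_\ell)\otimes_{U(\widetilde{\mf b}_\ell)}\C_\mu$ and invoking Lemma \ref{lem::11} with $\mf t=\widetilde{\mf b}_\ell$ further reduces both to $\Tor_1^{\widetilde{\mf b}_\ell}(\Res X,\C_\mu)$, where $X$ is one of $\widetilde{M}(\la)^{\text{st}}$ or $\widetilde{M}(\la)^{\text{st}'}$. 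I would then compute these via the super Chevalley-Eilenberg resolution of $\C_\mu$ as a $U(\widetilde{\mf b}_\ell)$-module relative to $\h_\ell$. Since the Chevalley automorphism $\omega$ interchanges $\widetilde{\mf n}^\pm_\ell$ and acts by $-\mathrm{id}$ on $\h$, the restricted $\widetilde{\mf b}_\ell$-module $\Res X$ is $\h$-semisimple with all weights of the form $-\la+\beta$ for $\beta\in\Z_{\geq 0}\Phi^+$ and one-dimensional $(-\la)$-weight space. The Tor inherits an $\h$-grading, and a weight-by-weight analysis of the Koszul differentials shows its vanishing for $\la\neq\mu$, giving \eqref{eq::vani1}.

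For \eqref{eq::21}, the long exact sequences of Ext obtained by applying $\Hom_{\wtg_\ell}(-,N)$ to $0\to K\to\widetilde{M}(\la)\to M\to 0$ and $\Hom_{\wtg_\ell}(\widetilde{M}(\la),-)$ to $0\to N\to\widetilde{M}(\mu)^\sigma\to Q\to 0$ produce the partial sequences
\begin{align*}
\Hom(K,N)&\to\Ext^1(M,N)\to\Ext^1(\widetilde{M}(\la),N),\\
\Hom(\widetilde{M}(\la),Q)&\to\Ext^1(\widetilde{M}(\la),N)\to\Ext^1(\widetilde{M}(\la),\widetilde{M}(\mu)^\sigma),
\end{align*}
with the last $\Ext^1$ vanishing by \eqref{eq::vani1}. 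By Lemma \ref{lem::1}, every subquotient of a Verma module is filtered by highest weight modules, and combined with the exactness of $(\_)^\sigma$ and the definition of $\sim$ in Subsection \ref{sect::223}, the hypothesis $\la\not\sim\mu$ implies that no simple subquotient of $\widetilde{M}(\la)$ occurs as a simple subquotient of $\widetilde{M}(\mu)^\sigma$. Hence $\Hom(K,N)=\Hom(\widetilde{M}(\la),Q)=0$, and chasing the diagrams yields $\Ext^1(M,N)=0$.

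The main obstacle is the weight-space computation of the $\widetilde{\mf b}_\ell$-Tor in the super setting: careful sign bookkeeping is needed because $\widetilde{\mf n}^+_\ell$ has both even and odd components, and one must avoid arguments that implicitly use finite composition length, which Verma modules for Takiff (super)algebras generally lack.
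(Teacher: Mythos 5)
Your diagram chase for \eqref{eq::21} is correct and matches the paper: the two short exact sequences, the vanishing of the four $\Hom$-spaces from $\la\not\sim\mu$, and the vanishing of $\Ext^1_{\wtg_\ell}(\widetilde M(\la),\widetilde M(\mu)^\sigma)$ from the first part give the conclusion.

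The gap is in your argument for \eqref{eq::vani1}. After reducing to $\Tor_1^{\widetilde{\mf b}_\ell}(\Res\widetilde M(\la)^{\text{st}},\C_\mu)$ via Lemma \ref{lem::11}, you replace the paper's remaining steps with a ``super Chevalley--Eilenberg resolution relative to $\h_\ell$'' and a ``weight-by-weight analysis of the Koszul differentials.'' This does not go through as sketched. The grading you invoke is an $\h$-grading, i.e., a grading by $\h^{(0)}$-weights, and indeed $\Res\widetilde M(\la)^{\text{st}}$ is only $\h^{(0)}$-semisimple; it is \emph{not} $\h_\ell$-semisimple, since the elements $h\otimes\theta^j$ with $j\geq 1$ act locally nilpotently rather than diagonally on the nilradical part. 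A weight argument on the Koszul complex therefore only detects $\la^{(0)}$ and $\mu^{(0)}$. It gives no vanishing when $\la^{(0)}=\mu^{(0)}$ but $\la\neq\mu$ in some higher component, which is exactly the new phenomenon in the Takiff setting, and even in the remaining range it only rules out $\la^{(0)}-\mu^{(0)}\notin\Z_{\geq 0}\Phi^+$, not $\la^{(0)}\neq\mu^{(0)}$.

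The missing idea is to apply Shapiro's lemma a second time rather than resolving. Using Lemma \ref{lem::9} to swap the arguments and then Lemma \ref{lem::11} again with $\mf s=\widetilde{\mf b}_\ell$, $\mf t=\h_\ell$, together with the observation that $\Res_{\widetilde{\mf b}_\ell}\widetilde M(\la)^{\text{st}}\cong U(\widetilde{\mf b}_\ell)\otimes_{\h_\ell}\C_{-\la}$ (here $\omega$ sends $\widetilde{\mf n}^-$ freely onto $\widetilde{\mf n}^+$ and negates the full $\h_\ell$-weight), one lands on $\Tor_1^{\h_\ell}(\C_\mu,\C_{-\la})\cong H_1(\h_\ell,\C_{\mu-\la})$. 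This Lie algebra homology of the abelian Lie algebra $\h_\ell$ with coefficients in the one-dimensional module $\C_{\mu-\la}$ vanishes whenever $\mu-\la\neq 0$ in $\h_\ell^\ast$, because the $\h_\ell$-action on the homology is both trivial (a general fact) and given by the character $\mu-\la$ (abelianness makes the adjoint action on $\Lambda^\bullet\h_\ell$ trivial). This final step crucially uses the full $\h_\ell$-weight $\mu-\la$, not just its $\h^{(0)}$-component, which is precisely what your $\h$-grading argument cannot see.
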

\begin{proof}
   By \eqref{eq::15}, it suffices to show that 
   \begin{align}
    &{\Tor}^{\wtg_\ell}_1(\widetilde{M}(\la)^\omega,\widetilde{M}(\mu))=0. \label{eq::vani2}
   \end{align}
 
 We will adapt the proof of  \mbox{\cite[Theorem 3.3.8]{K02}}. We calculate
\begin{align*}
	&{{\Tor}}^{\wtg_\ell}_1(\widetilde{M}(\la)^\omega,\widetilde{M}(\mu)) \\
	&\cong {{\Tor}}^{\widetilde{\mf b}_\ell}_1(\Res_{\widetilde{\mf b}_\ell}^{\wtg_\ell}\widetilde{M}(\la)^\omega,\C_{\mu}) \text{ by Lemma \ref{lem::11}} \\
	&\cong {{\Tor}}^{\widetilde{\mf b}_\ell}_1(\C_{\mu}, \Res_{\widetilde{\mf b}_\ell}^{\wtg_\ell}\widetilde{M}(\la)^\omega) \text{ by Lemma \ref{lem::9}} \\
	&\cong {{\Tor}}^{{\mf h}_\ell}_1(\C_{\mu}, \C_{-\la} )  \text{ by Lemma \ref{lem::11} and $\Res_{\widetilde{\mf b}_\ell}^{\wtg_\ell}\widetilde{M}(\la)^\omega\cong U(\widetilde{\mf b}_\ell)\otimes_{{\mf h}_\ell}\C_{-\la}$}.
\end{align*}
We claim that ${{\Tor}}^{\widetilde{\mf h}_\ell}_1(\C_{\mu}, \C_{-\la}) = {{\Tor}}^{\widetilde{\mf h}_\ell}_1(\C, \C_{\mu-\la})=0$. To prove this, we let $H_1({\mf h}_\ell, \C_{\mu-\la})$  denote the $1$-st Lie algebra homology of $\h_\ell$ with coefficients in $\C_{\mu-\la}$. By a standard argument we have ${{\Tor}}^{\widetilde{\mf h}_\ell}_1(\C, \C_{\mu-\la})\cong H_1({\mf h}_\ell, \C_{\mu-\la})$, which is a trivial ${\h}_\ell$-module (e.g., see  \cite[Subsection 3.1.1]{K02}). This completes the proof of  \eqref{eq::vani2}.

To prove \eqref{eq::21}, let 
\begin{align*}
&0\rightarrow X\rightarrow \widetilde{M}(\la) \rightarrow M\rightarrow 0,\\
&0\rightarrow N\rightarrow \widetilde{M}(\mu)^\sigma\rightarrow Y\rightarrow 0,
\end{align*} be two short exact sequences in $\wtg_\ell\Mod$. Applying the bi-functor $\Ext^{1}_{\wtg_\ell}(\_,\_)$, we obtain the following commutative diagram with exact rows and columns 
	\begin{align*}
	&\xymatrixcolsep{2pc} \xymatrix{
		& \Hom_{\wtg_\ell}(X,N) \ar[d] &  \Hom_{\wtg_\ell}(X,\widetilde{M}(\mu)^\sigma) \ar[d]  \\
		\Hom_{\wtg_\ell}(M,Y) \ar[r]     & \Ext^1_{\wtg_\ell}(M,N) \ar[r] \ar@<-2pt>[d]   & \Ext^1_{\wtg_\ell}(M,\widetilde{M}(\mu)^\sigma) \ar[d] \\ \Hom_{\wtg_\ell}(\widetilde M(\la),Y) \ar[r]    &  \Ext^1_{\wtg_\ell}(\widetilde{M}(\la),N)  \ar[r] &\Ext^1_{\wtg_\ell}(\widetilde{M}(\la),\widetilde{M}(\mu)^\sigma).}  
\end{align*} Since $\la$ and $\mu$ are of different types, it follows that  $$\Hom_{\wtg_\ell}(X,N)=\Hom_{\wtg_\ell}(X,\widetilde{M}(\mu)^\sigma) = \Hom_{\wtg_\ell}(M,Y)=\Hom_{\wtg_\ell}(\widetilde M(\la),Y)=0.$$ Consequently, the vanishing for  $\Ext_{\wtg_\ell}^1(M,N)$ follows by \eqref{eq::vani1}.
\end{proof}

\begin{cor} \label{coro::14} Suppose that $\Lambda_1,\Lambda_2\in  \h^\ast_\ell/\sim $ are two equivalence classes. Let $\la\in \Lambda_1$ and $M$ be a quotient of  $\widetilde{M}(\la)$.  Let $N$ be a  finite-length $\wtg_\ell$-module of  type $\Lambda_2$. Then we have 
	\begin{align}
		&\Ext^1_{\wtg_\ell}(M,N)\neq 0 \Rightarrow \Lambda_1=\Lambda_2.
	\end{align}

 In particular, we have   
\begin{align}
	&\Ext^1_{\wtg_\ell}(\wtL(\la),\wtL(\mu))\neq 0 \Rightarrow  \Lambda_1=\Lambda_2,\label{eq::22}
\end{align} for any $\mu\in \Lambda_2$.
\end{cor}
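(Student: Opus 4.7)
The plan is to reduce everything to the vanishing statement \eqref{eq::21} from Proposition \ref{prop::13} via an induction on the length of $N$.

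First, I would handle the base case where $N$ is simple, so $N \cong \wtL(\nu)$ for some $\nu \in \Lambda_2$. The key observation is that $\wtL(\nu)$ embeds as the socle of the dual Verma module $\widetilde{M}(\nu)^\sigma$. To see this, apply the exact contravariant functor $(\_)^\sigma$ to the canonical surjection $\widetilde{M}(\nu) \twoheadrightarrow \wtL(\nu)$ to obtain an injection $\wtL(\nu)^\sigma \hookrightarrow \widetilde{M}(\nu)^\sigma$, and note that $\wtL(\nu)^\sigma \cong \wtL(\nu)$: indeed, the supertransposition fixes $\mf h$ pointwise so $(\_)^\sigma$ preserves $\h$-characters, and $\wtL(\nu)^\sigma$ is simple (as $(\_)^\sigma$ is a duality on the subcategory of $(\wtg_\ell,\h)$-modules of finite length in each weight space), hence is the unique simple module of highest weight $\nu$. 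Since $\la \in \Lambda_1 \neq \Lambda_2 \ni \nu$, we have $\la \not\sim \nu$, and hypotheses of \eqref{eq::21} are met with the submodule $\wtL(\nu) \hookrightarrow \widetilde{M}(\nu)^\sigma$ and the quotient $M$ of $\widetilde{M}(\la)$, giving $\Ext^1_{\wtg_\ell}(M, \wtL(\nu)) = 0$.

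For the inductive step, suppose $N$ has length greater than one, and choose a short exact sequence
\[
0 \to N' \to N \to N'' \to 0
\]
with $N', N''$ of strictly smaller length. Since every simple subquotient of $N$ is of the form $\wtL(\nu)$ with $\nu \in \Lambda_2$, both $N'$ and $N''$ are of type $\Lambda_2$. The long exact sequence obtained by applying $\Ext^\bullet_{\wtg_\ell}(M,\_)$ gives
\[
\Ext^1_{\wtg_\ell}(M,N') \;\longrightarrow\; \Ext^1_{\wtg_\ell}(M,N) \;\longrightarrow\; \Ext^1_{\wtg_\ell}(M,N''),
\]
whose outer terms vanish by induction, so the middle term vanishes too. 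This yields the contrapositive of the first assertion. The displayed ``in particular'' statement \eqref{eq::22} is the special case $M = \wtL(\la)$, $N = \wtL(\mu)$.

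I expect no substantial obstacle beyond carefully justifying the identification $\wtL(\nu)^\sigma \cong \wtL(\nu)$ in the Takiff super setting; once the socle embedding $\wtL(\nu) \hookrightarrow \widetilde{M}(\nu)^\sigma$ is in place, the rest is a standard induction using the long exact sequence and Proposition \ref{prop::13}. Since the ambient category $\wtg_\ell\Mod$ is not artinian, one must be slightly careful that the induction is performed on $N$ (which is assumed of finite length) and not on $M$.
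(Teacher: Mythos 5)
Your argument is correct and is essentially the approach the paper intends: the paper's proof is simply ``Using induction on the length of $N$, the conclusion follows by Proposition~\ref{prop::13},'' and your write-up fills this in by (a) in the base case realizing the simple module $\wtL(\nu)\cong\wtL(\nu)^\sigma$ as a submodule of $\widetilde{M}(\nu)^\sigma$ and invoking \eqref{eq::21}, and (b) running the long exact sequence for the inductive step. The only item to be careful about (which you flag) is the identification $\wtL(\nu)^\sigma\cong\wtL(\nu)$; it is best justified by noting that $(\_)^{\text{st}}$ fixes $\h_\ell$ (not merely $\h$) pointwise, so the dual of the highest weight vector is a highest weight vector of the same $\h_\ell$-weight $\nu$, and simplicity of $\wtL(\nu)^\sigma$ follows from $(M^\sigma)^\tau\cong M$ on $\wto_\ell$.
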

\begin{proof} Using induction on the length of $N$, the conclusion follows by Proposition \ref{prop::13}. \end{proof}

We are now in a position to give a proof of Proposition \ref{lem::blockdec} 
 
 \begin{proof} [Proof of Proposition \ref{lem::blockdec}]We shall adapt the arguments used in \cite[Corollary 3.3.10]{K02}. Let $M$ and $N$ be two objects in $\wto_\ell$ of  types $\Lambda_1$ and $\Lambda_2$, respectively, for some  different  equivalence classes $\Lambda_1\neq \Lambda_2\subseteq \h^\ast_\ell/\sim$. We are going to show that  $\Ext_{\wtg_\ell}^1(M,N)=0$.  
     Thanks to Lemma \ref{lem::1}-(iii), without loss of generality, we may assume that $M$ and $N$ are highest weight modules.  By Lemma \ref{lem::10},  it suffices to prove that  $  {{\Tor}}^{\wtg_\ell}_1(M^\omega,N^\sigma)=0.$ By Proposition \ref{prop::12}, there exists a filtration $N=N_0\supseteq  N_1\supseteq N_2\supseteq \cdots,$ such that $\bigcap_{i\geq 0}N_i =0$ and  $N_j/N_{j+1}$ are simple, for all $j\geq 0$. 
Therefore, $N^\sigma$ admits a filtration  
 $0=X_0 \subseteq X_1 \subseteq X_2  \subseteq \cdots $  
 such that each  $X_{j+1}/X_j\cong (N_{j}/N_{j+1})^\sigma\cong N_{j}/N_{j+1}$ is  simple. By considering weight spaces, it follows that the union of all $X_i$ is $N^{\sigma}$. Since the Tor functor commutes with direct limits, the conclusion about the vanishing of $\Ext_{\wtg_\ell}^1(M,N)$ follows by \mbox{Corollary \ref{coro::14}}. Finally, we note that every non-zero object in $\wto_\ell$ contains a highest weight module by Lemma \ref{lem::1}. Using arguments similar to those in the proof of  \cite[Corollary 3.3.10]{K02}, together with the results proved above, the proof of Proposition \ref{lem::blockdec} is completed.
 \end{proof}

The authors have no conflicts of interest to declare that are relevant to this article.

\end{document}